\newtheorem{theorem}{Theorem}[section]
\newtheorem{lemma}[theorem]{Lemma}
\newtheorem{proposition}[theorem]{Proposition}
\newtheorem{corollary}[theorem]{Corollary}
\newtheorem{example}[theorem]{Example}
\newtheorem{remark}[theorem]{Remark}
\newtheorem{definition}[theorem]{Definition}
\newtheorem{notation}[theorem]{Notation}
\newtheorem{proposition-definition}[theorem]{Proposition-Definition}
\newcommand{\C}{\mathbb{C}}
\newcommand{\range}{\mathcal{B}}
\newcommand{\domain}{\mathcal{A}}
\newcommand{\E}{\mathbb{E}}
\newcommand{\M}{\mathcal{M}}
\newcommand{\MC}{\mathcal{M}^\circ}
\newcommand{\I}{\mathcal{I}}
\newcommand{\otimesb}{\otimes_\range}
\newcommand{\hh}{\mathcal{H}}
\newcommand{\K}{\mathcal{K}}
\newcommand{\J}{\mathcal{J}}
\newcommand{\N}{\mathbb{N}}
\newcommand{\ii}{{\bf i}}
\newcommand{\bp}{\boxplus}
\newcommand{\br}{\boxright}
\newcommand{\B}{\mathbb{B}}
\newcommand{\cp}{\mathcal{CP}(\range)}
\newcommand{\mnc}{M_n(\C)}
\newcommand{\cbd} {\Sigma_0(\range)}
\begin{document}
\unitlength=1mm
\special{em:linewidth 0.4pt}
\linethickness{0.4pt}
\title[Convolutions and Transforms]{ Relations between  convolutions and transforms in operator-valued free probability}
\author{Weihua Liu}
\maketitle

\begin{abstract}
We introduce a class of independence relations, which include free, Boolean and monotone independence, in operator valued probability. 
We show that this class of  independence relations have a matricial extension property so that we can easily study their associated convolutions via Voiculescu's fully matricial function theory.  
Based the matricial extension property, we show that  many results can be generalized to multi-variable cases. 
Besides free, Boolean and monotone independence convolutions,  we will focus on two important convolutions, which are orthogonal and subordination additive convolutions.
We show that the operator-valued subordination functions, which come from the  free additive convolutions or the operator-valued free convolution powers, are reciprocal Cauchy transforms of operator-valued random variables which are uniquely determined up to Voiculescu's fully matricial function theory. 
In the end, we  study relations between certain convolutions and transforms in $C^*$-operator valued probability.

\end{abstract}

\section{Introduction}
\subsection{Convolutions and transforms in $\C$-valued probability}
Free probability is a theory introduced by Voiculescu for studying the reduced free products of operator algebras with specified states \cite{VDN,NS}. 
For example, $C^*$-algebras with norm continuous states and Von Neumann algebras with normal states are frameworks  for this theory.   
The elements from the algebras are called noncommutative  random variables or random variables for short. 
A single selfadjoint random variable can be identified with  a classical $\mathbb{R}$-valued random variable thus one can study its distribution. 
The notion of free independence is a key relation for random variables in free probability theory.
Given two freely independent random variables $X$ and $Y$ whose distributions are $\mu$ and $\nu$, respectively, 
then  the distribution of $X+Y$ is denoted by $\mu\bp\nu$, which is called the free additive convolution of $\mu$ and $\nu$. 
In the process of computing the distribution $\mu\bp\nu$,  the Cauchy transform plays  a central role.   Given a probability measure  $\mu$ on $\mathbb{R}$, then the Cauchy transform of $\mu$ is the function
\begin{equation} 
G_\mu(z)=\int_{\mathbb{R}}\frac{1}{z-t}d\mu(t):\C^+\rightarrow \C^-.
\end{equation}
Voiculescu introduced the R-transform of $\mu$  to be 
$$R_\mu(z)=G_\mu^{\langle-1\rangle}(z)-\frac{1}{z},$$
where $G_\mu^{\langle-1\rangle}$ is the inverse of $G_\mu$ with respect to composition. Voiculescu showed that 
\begin{equation}\label{additive r-transform}
R_{\mu\bp\nu}= R_\mu+R_\mu,
\end{equation}
when $\mu$ and $\nu$ are compactly supported probability measures \cite{Voi8,Voi9}.  Later, Bercovici and Voiculescu proved  the relation (\ref{additive r-transform}) to probability measures with non-compact support \cite{BV2}.
Based on the linearization property (\ref{additive r-transform}) of the R-transform, one can define convolution powers with respect to $\bp$ such that if $\mu$ is a probability measure and $t>0$ is a real number, the convolution power $\mu^{\bp t}$( when it exists) is a probability measure uniquely determined by $$R_{\mu^{\bp t}}=tR_{\mu}. $$
It is well known that $\mu^{\bp t}$ defines for all $t\geq 1$ and $\mu$ \cite{NS}.

In $\C$-valued noncommutative probability,  there are  independence relations other than free indenpendence, for example,  Boolean independence \cite{SW}, monotone independence  \cite{Mu2},  s-free independence  \cite{Len3} and orthogonal independence \cite{Len3}.  Similarly, we can define  the Boolean additive convolution $\uplus$, the monotone additive  convolution $\rhd$, the  s-free additive  convolution $\br$,  the orthogonal additive  convolution $\vdash$ for proability measures.  Among these independence relations,  Boolean independence is only commutative non-unital universal  independence relation \cite{Sp1}.  Given a probability measure $\mu$, we denote by $F_\mu$ the reciprocal Cauchy transform of $\mu$, that is 
$$F_{\mu}(z)=\frac{1}{G_\mu(z)}:\C^+\rightarrow \C^+.$$  The Boolean additive convolution has the following property
\begin{equation}\label{Scalar boolean additive}
F_{\mu\uplus \nu}(z)=F_{\mu}(z)+F_{\nu}(z)-z
\end{equation}
where $\mu$,$\nu$ are probability measures and $\mu\uplus \nu$ is the Boolean convolution of $\mu$ and $\nu$ \cite{Franz}. It is similar to the free additive convolutions, we have Boolean convolution powers $\mu^{\uplus t}$ such that if $t>0$ and $\mu$ is a probability measure, then $ \mu^{\uplus^t}$(always exists) is a probability uniquely determined by 
$$F_{\mu^{\uplus t}}(z)=tF_{\mu}(z)+(1-t)z.$$

Monotone independence relation is a noncommutative relation but  defines an associative convolutions.   The monotone additive convolution has the following property
\begin{equation}\label{Scalar monotone additive}
F_{\mu\rhd\nu}(z)=F_{\mu}(F_{\nu}(z))
\end{equation}
where $\mu$,$\nu$ are probability measures and $\mu\rhd\nu$ is the monotone convolution of $\mu$ and $\nu$ \cite{Franz}.   

Orthogonal independence relation is introduced by Lenczewski \cite{Len3} to study decompositions of the free additive convolutions.
The  additive convolution $\vdash$ associated with the orthogonal independence relation is neither  commutative  nor associative.  
The orthogonal additive convolution has the following analytic property
\begin{equation}\label{orthogonal additive}
F_{\mu\vdash\nu}(z)=F_\mu(F_\nu(z))-F_\nu(z)+z
\end{equation}
where $\mu$, $\nu$ are probability measures and $\mu\vdash\nu$ is the orthogonal additive convolution of $\mu$ and $\nu$ \cite{Len3}. From the relations (\ref{Scalar boolean additive}), (\ref{Scalar monotone additive}) and (\ref{orthogonal additive}), one can easily see that 
\begin{equation}
\mu\rhd \nu=\nu\uplus(\mu\vdash \nu).
\end{equation}

The possibility for studying the free additive convolutions via Boolean and monotone additive convolutions is 
 based on the fact that  the Cauchy transform of the distribution  $\mu\bp\nu $ is  subordinated to the Cauchy transforms of $\mu$ and of $\nu$ on  the upper half-plane $\C^+$\cite{Voi6}. 
That is
\begin{equation}\label{Subordination 1}
F_{\mu\bp\nu}(z)=F_\mu(\omega_1(z))= F_\nu(\omega_2(z)).
\end{equation}

It is shown that $ \omega_1(z)$ and $ \omega_2(z)$ are reciprocal Cauchy transforms of certain distributions which are completely determined by $\mu$ and $\nu$ \cite{CF}. 
Therefore, subordination functions provide a new type of convolution, which is called  subordination convolution or s-free additive convolution \cite{Len3}. 
 It is denoted by $\mu \br \nu$ whose reciprocal Cauchy transform is $\omega_1(z)$ in Equation (\ref{Subordination 1}).  In summary, we have the following property
 \begin{equation}\label{Subordination 2}
F_{\mu\bp\nu}(z)=F_\mu(F_{\nu\br\mu}(z))= F_\nu(F_{\mu\br\nu}(z))=F_{\nu\br\mu}(z))+F_{\mu\br\nu}(z)-z
\end{equation}
or
 \begin{equation}\label{Subordination 2}
\mu\bp\nu=\mu\rhd({\nu\br\mu})= \nu\rhd({\mu\br\nu})=(\nu\br\mu)\uplus (\mu\br\nu).
\end{equation}
On should be careful that the s-free additive convolution $\br$ is neither commutative nor associative. However,  $\br$ is very powerful in studying free additive convolutions \cite{ BB, Bel1, Bel2}. 

Let us brief review some relations between the convolutions we introduced before.  
In \cite{BN3}, Belinschi and Nica introduced a family $\{\B_t|t>0\}$ of transforms on probability measures by the formula
$$\B_t(\mu)=(\mu^{\bp (1+t)})^{\uplus\frac{1}{1+t}}.$$
When $t=1$, the transform is a remarkable bijection discovered by Bercovici and Pata in their study of relations between $\bp$-infinite divisibility and $\uplus$-infinite divisibility\cite{BP}.
Belinschi and Nica showed that these transforms form a semigroup with respect to composition, that is 
 \begin{equation}\label{scalar B-P bijection}
 \B_t(\B_s(\mu))=\B_{s+t}(\mu), \forall s,t>0.
 \end{equation}
Then, they showed that the relation (\ref{scalar B-P bijection}) can be derived by the following relation between the free convolution powers and the  Boolean convolution powers
\begin{equation}\label{scalar FB-BF}
(\mu^{\bp p})^{\uplus q}=(\mu^{\uplus q'})^{\bp p'},
\end{equation}
where $p\geq 1$, $q>\frac{p-1}{p}$, $p'=\frac{pq}{1-p+pq}$, $q'=1-p+pq$ and $\mu$ is an algebraic probability distribution \cite{BN1}.

Let $\gamma$ be the free central limit law with variance $1$ in free probability.  In \cite{BN1},  Belinschi and Nica studied the free Brownian motion started at an arbitrary algebraic probability measure $\mu$ in the process $\{\mu\bp \gamma^{\bp t}|t\geq 0\}$. 
They defined a transform $\Phi$ for probability measures such that given a probability measure $\mu$ on $\mathbb{R}$, then $\Phi(\mu)$ is a probability measure on $\mathbb{R}$ such that 
\begin{equation}\label{Phi-transform}
F_{\Phi(\mu)}(z)=z-G_{\mu}(z).
\end{equation}
Belinschi and Nica discovered that
\begin{equation}\label{Phi and BP}
\Phi(\mu\bp \gamma^{\bp t} )=\B_{t}(\Phi).
\end{equation}
Later,  Anshelevich generalized the relation (\ref{Phi and BP}) further by considering the free process $ \{\mu\bp \nu^{\bp t}|t\geq 1\}$  where $\nu$ is an arbitrary probability measure \cite{An}. 

Almost the above relations are interpreted via s-free additive  convolutions which is fully developed in a combinatorial way by Nica \cite{Nica}.
First, Nica exhibited the following relation between the free convolutions and  the s-free additive convolutions
\begin{equation} \label{scalar free addtive and subordination}
(\mu_1\bp\mu_2)\br \nu=(\mu_1\br \nu) \bp (\mu_2\br \mu )
\end{equation}
and 
\begin{equation}\label{scalar free powers and subordination}
(\mu^{\bp t})\bp \nu=(\mu\br\nu)^{\bp t}
\end{equation}
where $\mu_1$, $\mu_2$, $\mu$ and $\nu$ are probability measures and $t$ is a positive real number. Nica also found the following relations between subordination convolutions and $\B$-transform, $\Phi$-transform
\begin{equation}\label{scalar subordination and B}
\mu\br\mu=\B_1(\mu)
\end{equation}
and 
\begin{equation}\label{semicircular Phi B}
\gamma\br\mu=\B_1(\Phi(\mu))
\end{equation}
and 
\begin{equation}
\mu^{\bp p}=\mu\rhd\left(\B_1(\mu))^{\bp p-1}\right).
\end{equation}
Then, Anshelevich's free evolution with two states can be defined by  the following formula
\begin{equation}
\Phi[\mu,\nu]=\B_1^{-1}(\mu\br \nu)
\end{equation}
and satisfies the following relation
\begin{equation}
\Phi[\mu,\nu\bp \mu^{\bp t}]=\B_t(\Phi[\mu,\nu])
\end{equation}
or equivalently
\begin{equation}\label{Anshelevichi transform}
\B_t(\mu\br \nu)=\mu\br(\mu^{\bp t}\bp \nu),
\end{equation}
where $\mu,\nu$ are probability measures and $t\geq 1$.

It should be pointed out here that the results of Anshelevich, Belinschi and Nica are more general than what are listed above. In short, they considered $k$-tuples of random variables.  In this paper, we are gonna show that $k$-tuples of random variables are special cases of operator valued random variables.

\subsection{Operator valued probability}
Soon after the birth of free probability, Voiculescu generalized the theory  to an operator valued framework where the states in the original theory are replaced by bimodule maps \cite{Voi5}.  
The initial analytic type of $R-$transform in operator valued free probability  was first introduced by Voiculescu \cite{Voi5} and   the combinatorial aspects of operator valued R-transforms were developed by Speicher \cite{Speicher}. 
Similarly, the  operator valued R-transforms have the additive property (\ref{additive r-transform}). 
It follows that the convolution powers with respect to $\bp $ can be naturally generalized in operator-valued frame work if one uses positive real number power. 
Following a question due to Bercovici, in $C^*$-operator valued probability, the convolution powers with respect to $\bp $ was generalized further by Anshelevich, Belinschi, Fevrier, and Nica  such that $t$  can be  a completely positive map \cite{ABFN}.  
An explicit construction of the operator-valued free convolution powers was given by Shlyakhtenko \cite{Sh1}.  Furthermore,  a combinatrorial definition for the Belinschi-Nica's$\Phi$ is provided,  then  the relation (\ref{scalar B-P bijection}) and the relation (\ref{Phi and BP}) are generalized in  operator valued probability  \cite{ABFN}.  

On the other hand,  operator valued subordination functions  were discovered Biane \cite{Biane} and were reformulated by Voiculescu \cite{Voi4,Voi10} . 
As well as the scalar subordination functions, the subordination property is a very useful analytic property in operator-valued probability \cite{BMS,BSTV,BPV}. 
By studying the subordination property further,  Voiculescu developed his fully matricial  function theory \cite{Voi3, Voi2}, from which the main tool in this paper is derived, for studying operator valued probability. A direct generalization of Cauchy transform  is  usually not enough to determine an operator-valued distribution, but this can be done by using Voiculescu's fully matricial operator valued functions.

The goals of this paper are the following:  In $C^*$-operator valued probability, we show that  operator-valued subordination functions completely determine  operator valued distributions in the fully matricial operator function theory.  
Therefore, we can define operator-valued subordination convolutions or s-free additive convolutions.  
Furthermore,  we show that Belinschi-Nica' s $\Phi$-transform is well defined in $C^*$-operator valued probability by using a fully matricial version of the equation (\ref{Phi-transform}).  
Then,  we prove relations (\ref{scalar FB-BF})-(\ref{Anshelevichi transform}) in operator valued cases.
We also show that many properties related to $k$-tuples of  classical  random variables can be reduced properties related to operator valued random variables.

Besides this introduction section, the rest of this paper is organized as follows:  
In Section 2, we  introduce the main notions and  tools to be used in this paper. 
In Section 3, we introduce a class of independence relations which are called $\J$-independence  and their associated convolutions. 
We show a matricial property of the $\J$ independence relation, and explain that  the $\J$-convolutions of $k$-tuples of $\range$-valued random variables can be decoded from  the$\J$-convolutions of $\range\otimes \mnc$-valued random variables.
In Section 4, we study the reciprocal Cauchy transforms of Boolean convolutions.
In Section 5, we study the reciprocal Cauchy transforms of Monotone convolutions.
In Section 6, we introduce and study the  operator valued orthogonal additive convolutions.
In Section 7, we introduce and study the  operator valued s-free additive convolutions.
After the preparations in the previous sections, in Section 8, we generalize the relations in Section 1.1 to the operator valued case.

\section{Preliminary}

\subsection{$\range$-valued Independence relations }  We start with some necessary definitions in operator probability.

\begin{definition} \normalfont A $\range$-valued probability space $(\domain, \E:\domain\rightarrow \range)$ consists of a unital algebra  $\range$, a unital algebra $\domain$ which is also a $\range$-$\range$ bimodule and a conditional expectation $\E:\domain\rightarrow \range$ i.e.
 $$\E[b_1ab_2]=b_1\E[a]b_2,  \,\,\, \E[b 1_\domain]=b,$$
for all $b_1,b_2,b\in\range$, $a\in\domain$ and $1_\domain$ is the unit of $\domain$.  The elements of $\domain$ are called $\range$-valued random variables.
Suppose that  $\range$ is a unital $C^*$-algebra and $\domain$ is a $*$-algebra,   the conditional expectation $\E$ is said to be  positive if 
 $$\E[aa^*]\geq 0,$$
 for  all $a\in \domain.$ An element $x\in \domain$ is selfadjoint if $x=x^*$.
 
\end{definition} 

We will denote by $(\domain,\E)$ short for $(\domain, \E:\domain\rightarrow \range)$ when there is no confusion. Throughout this paper,  we will assume that $\domain$ and $\range$   are   unital  $C^*$-algebras.
In this case, $(\domain,\E)$ is a called a $C^*$-operator valued probability space.

\begin{definition}\normalfont We denote by $\range\langle X\rangle$  the algebra which is freely generated by $\range$ and the indeterminant $X$. The elements in  $\range\langle X\rangle$ are called $\range$-polynomials. In addition,  $\range\langle X\rangle_0$ denotes the subalgebra of $\range\langle X\rangle$ whose elements do not contain a constant term, i.e. the linear span of the noncommutative monomials $b_0Xb_1Xb_2\cdots b_{n-1}Xb_n$, where $b_k\in\range$ and $n\geq 1$. 
\end{definition}

\begin{definition}\normalfont
Given a a $\range$-valued probability space $(\domain,\E)$ and an element $a\in \domain$, the \emph{distribution} $\mu_a$ of $a$ is is a map from $\range\langle X\rangle$ to $\range$ such that 
$$ \mu_a(P)=\E[P(a)], $$
for all $P\in \range\langle X\rangle.$ A \emph{$\range$-valued distribution} $\mu$ is  is a map from $\range\langle X\rangle$ to $\range$ such that $\mu=\mu_a$ from some $\range$-valued random variable $a$. If $a$ can be chose from $C^*$-operator $\range$-valued probability space, then $\mu$ is called a $\range$-valued distribution and will be denoted by $\mu\in \cbd$.
\end{definition}

\begin{definition}\normalfont
Given a $\range$-valued probability space $(\domain,\E)$.
\begin{itemize}
\item A family of unital $\range$-$\range$-bimodule subalgebras  $\{\domain_i\}_{i\in I}$ of $\domain$  are said to be freely independent with respect to $\E$ if 
$$\E[a_1\cdots a_n]=0,$$
whenever $i_1\neq i_2\neq \cdots\neq i_n$, $a_k\in \domain_{i_k}$ and $\E[a_k]=0$ for all $k$. 
A family of $ (x_i)_{i\in I}$ are said to be freely independent over $\range$, if the unital $\range$-$\range$-bimodule subalgebras $\{\domain_i\}_{i\in I}$ which are generated by $x_i$, respectively,  are freely independent, or equivalently 
$$\E[p_1(x_{i_1})p_2(x_{i_2})\cdots p_n(x_{i_n})]=0,$$
whenever $i_1\neq i_2\neq \cdots\neq i_n$, $p_1,...,p_n\in \range\langle X\rangle$ and $\E[p_k(x_{i_k})]=0$ for all $k$.

\item A family of non-unital $\range$-$\range$-bimodule subalgebras $\{\domain_i \}_{i\in I}$ of $\domain$  are said to be boolean independent with respect to $\E$ if 
$$\E[a_1\cdots a_n]=\E[a_1]\E[a_2]\cdots \E[a_n],$$
whenever  $a_k\in \domain_{i_k}$ and $i_1\neq i_2\neq\cdots\neq i_n$.   
 A family of random variables $\{x_i\}_{i\in I}$ are said to be boolean independent over $\range$, if the non-unital $\range$-$\range$-bimodule subalgebras $\{\domain_i\}_{i\in I}$ which are generated by $x_i$  respectively are boolean independent, or equivalently 
$$\E[p_1(x_{i_1})p_2(x_{i_2})\cdots p_n(x_{i_n})]=\E[p_1(x_{i_1})]\E[p_2(x_{i_2})]\cdots \E[p_n(x_{i_n})],$$
whenever $i_1\neq i_2\neq\cdots\neq i_n$ and $p_1,...,p_n\in\range\langle X\rangle_0$.

\item If $\I$ is ordered,with order $>$. A family of non-unital $\range$-$\range$-bimodule subalgebras $\{\domain_i \}_{i\in I}$  of $\domain$   are said to be monotone independent with respect to $E$ if 
$$\E[a_1\cdots a_{k-1}a_ka_{k+1} a_n]=\E[[a_1\cdots a_{k-1}\E[a_k]a_{k+1} a_n],$$
whenever  $a_k\in \domain_{i_k}$ and $i_k>i_{k-1},i_{k+1}$.    A family of random variables $\{x_i\}_{i\in I}$ are said to be boolean independent over $\range$, if the non-unital $\range$-$\range$-bimodule subalgebras $\{\domain_i\}_{i\in I}$ which are generated by $x_i$  respectively are monotone independent.
\end{itemize}
\end{definition}

\begin{definition}\normalfont
Let $\mu,\nu\in\cbd$ such that $\mu$ is equal to the distribution of $x$ and $\nu$ is the distribution of $y$, where $x$ and $y$ are  freely(Boolean, Monotone) independent in a $\range$-valued probability space $(\domain,\E)$.  Then the distribution of $x+y$ is called the free(Boolean, Monotone)  additive convolution of $\mu$ and $\nu$, and will denoted by $\mu\boxplus\nu$.($\mu\uplus\nu$,$\mu\rhd\nu$)
\end{definition}

\subsection{Hilbert $C^*$-modules}  
Our  constructions of $\range$-valued faimily of random variables of independence relations  rely  on the theory of Hilbert $C^*$-modules. We refer  to \cite{Lance} for a good introduction to this theory. If $\range=\C$, the theory becomes the classical Hilbert space theory.  

\begin{definition}\normalfont
 An inner-product $\range$-module is a  linear space $\M$ which is a $\range$-right module together with a map $(x,y)\rightarrow \langle x,y\rangle_{\M}:\M\otimes \M\rightarrow \range$ such that 
 \begin{itemize}
 \item[1.] $\langle x+y,z \rangle_{\M}=\langle x,z  \rangle_{\M}+\langle y,z \rangle_{\M}$
 \item[2.]$\langle x,yb \rangle_{\M}=(\langle x,y\rangle_{\M}) b$
 \item[3.]$\langle x,y\rangle_{\M}=\langle y,x \rangle_{\M}^*$
 \item[4.]$\langle x,x \rangle_{\M}> 0$ if $x\neq 0$
 \end{itemize}
 for all $x,y,z\in \M$ and $b\in \range$.\\
 $\M$ is a $\range$-$\range$ Hilbert bimodule if $\M$ is a $\range$-$\range$  bimodule and is complete with respect to the inner product  $\langle ,\rangle_{\M}.$
\end{definition}

\begin{definition}\normalfont Let $\M$ be a $\range$-$\range$ Hilbert bimodule. A map $t:\M\rightarrow \M$ is adjointable if there is a map $t^*:\M\rightarrow \M$ such that 
$$ \langle tx,y\rangle_{\M}=\langle x,t^*y\rangle_{\M}$$
for all $x,y\in M$. The set of all adjointable maps from $\M_i$ to $\M_i$ is denoted by $L(\M)$.
\end{definition}
It is well known that $L(\M)$ is a $C^*$-algebra.
\begin{definition}\normalfont
A  $\range$-$\range$ Hilbert bimodule with a specified vector is a pair $(\M,\xi)$ consists of  a $\range$-$\range$ Hilbert bimodule $\M$ and a vector $\xi\in \M$ such that  $\langle \xi,b\xi \rangle_{\M}=b$ for $b\in \range.$  
$\MC$ is the orthogonal completion of $\range\xi$.  Then $\M=\range\xi\oplus \MC$, $(L(M),\langle \xi,\cdot\xi \rangle_{\M})$ is a $C^*$-operator $\range$-valued probability space. 
\end{definition}

\begin{notation}\normalfont
  We will denote  $\langle \xi,\cdot\xi \rangle_{\M}$ by $\phi_\xi(\cdot)$.
\end{notation}

\begin{lemma}Let $(\M,\xi)$ be a  $\range$-$\range$ Hilbert bimodule with a specified vector. Then 
$$b\xi=\xi b,$$ for all $b\in\range.$ 
\end{lemma}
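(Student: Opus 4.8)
The plan is to use positive-definiteness (axiom 4 of the inner product): to prove that the two vectors $b\xi$ and $\xi b$ coincide, it suffices to show their difference has vanishing self-inner-product, since $\langle x,x\rangle_{\M}=0$ forces $x=0$. So I would expand, using additivity in both slots,
\[
\langle b\xi-\xi b,\, b\xi-\xi b\rangle_{\M}
=\langle b\xi, b\xi\rangle_{\M}-\langle b\xi, \xi b\rangle_{\M}-\langle \xi b, b\xi\rangle_{\M}+\langle \xi b, \xi b\rangle_{\M},
\]
and show that each of the four terms equals $b^*b$, so the whole expression cancels to $0$ and the lemma follows at once.

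Before computing, I would record the auxiliary identity $\langle xb, y\rangle_{\M}=b^*\langle x, y\rangle_{\M}$, obtained from axioms 2 and 3 via $\langle xb, y\rangle_{\M}=\langle y, xb\rangle_{\M}^*=(\langle y, x\rangle_{\M}\,b)^*=b^*\langle x, y\rangle_{\M}$. Combined with the defining property $\langle \xi, b\xi\rangle_{\M}=b$ of the specified vector (which for $b=1_{\range}$ gives $\langle \xi,\xi\rangle_{\M}=1_{\range}$), this disposes of three of the four terms by purely formal manipulation: axiom 3 gives $\langle b\xi,\xi\rangle_{\M}=\langle \xi,b\xi\rangle_{\M}^*=b^*$, so axiom 2 yields $\langle b\xi,\xi b\rangle_{\M}=\langle b\xi,\xi\rangle_{\M}\,b=b^*b$; then $\langle \xi b, b\xi\rangle_{\M}=\langle b\xi,\xi b\rangle_{\M}^*=b^*b$; and finally $\langle \xi b,\xi b\rangle_{\M}=\langle \xi b,\xi\rangle_{\M}\,b=b^*\langle \xi,\xi\rangle_{\M}\,b=b^*b$ by the auxiliary identity. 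The remaining, genuinely load-bearing term is $\langle b\xi, b\xi\rangle_{\M}$: here I would invoke the standard fact, built into the notion of a Hilbert $\range$-$\range$ bimodule, that the left action of $\range$ is implemented by adjointable operators forming a $*$-representation, i.e. $\langle bx, y\rangle_{\M}=\langle x, b^*y\rangle_{\M}$. Granting this, $\langle b\xi, b\xi\rangle_{\M}=\langle \xi, b^*b\,\xi\rangle_{\M}=b^*b$, again by the defining property applied to $b^*b\in\range$.

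With all four terms equal to $b^*b$ the right-hand side of the display is $b^*b-b^*b-b^*b+b^*b=0$, whence $b\xi-\xi b=0$ by positivity. I expect the only real subtlety to be this last step: the three terms involving a right multiplication by $b$ collapse using axioms 1--3 and the definition of $\xi$ alone, but $\langle b\xi, b\xi\rangle_{\M}$ cannot be reduced without the compatibility of the \emph{left} module structure with the inner product. I would therefore state explicitly that this adjointability (the $*$-homomorphism property of the left action) is part of the Hilbert-bimodule hypothesis, since it is exactly what makes the cancellation go through.
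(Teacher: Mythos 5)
Your proof is correct, and it takes a genuinely different (though equally elementary) route from the paper's. The paper argues by nondegeneracy: writing an arbitrary vector of $\M$ as $b'\xi+m$ with $m\in\MC$, it checks that $\langle b'\xi+m,\,b\xi\rangle_{\M}=b'^*b=\langle b'\xi+m,\,\xi b\rangle_{\M}$, so that $b\xi$ and $\xi b$ pair identically against every vector and hence coincide. You instead expand $\langle b\xi-\xi b,\,b\xi-\xi b\rangle_{\M}$ into four terms, show each equals $b^*b$, and invoke axiom 4. Both arguments lean on the same hidden hypothesis, namely that the left $\range$-action is compatible with the inner product (adjointable, $\langle bx,y\rangle_{\M}=\langle x,b^*y\rangle_{\M}$): the paper uses it silently in the equality $\langle b'\xi,b\xi\rangle_{\M}=b'^*b$, while you isolate it in the single term $\langle b\xi,b\xi\rangle_{\M}$ and flag it explicitly. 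That flag is a genuine improvement, since the paper's definition of an inner-product $\range$-module lists only the right-module compatibility axioms. What the paper's version buys is brevity, at the cost of invoking the decomposition $\M=\range\xi\oplus\MC$ and the orthogonality of $\MC$ to $\range\xi$; what yours buys is that it works entirely with the four inner products of the two vectors at hand and makes the one load-bearing assumption visible.
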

\begin{proof}
Since  $\xi b\in \range \xi$, we have 
$$ \langle b'\xi+m,b\xi\rangle_\M=b'^*b=\langle b'\xi+m,\xi\rangle_\M b=\langle b'\xi+m,\xi b\rangle_\M$$
for all $b'\in \range$ and $m\in\MC$.
The proof is done.
\end{proof}

Let $x\in L(\M)$ be a selfadjoint operator. Then   we can write 
$$x=\left(\begin{array}{cc}
p&a\\
a^*&T
\end{array}\right),$$
where $a:\range\xi\rightarrow \MC$, $T:\MC\rightarrow \MC$, $p:\range\xi\rightarrow \range\xi$. Notice that, $p$ can be identified as a map from $\range$ to $\range$, we have the following result.

\begin{lemma}\label{constant corner}
$p$ is a $\range$ constant, namely $p\in \range.$
\end{lemma}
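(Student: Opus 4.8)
The plan is to show that, under the canonical identification of $\range\xi$ with $\range$, the corner $p$ acts by left multiplication by the single element $c=\phi_\xi(x)=\langle\xi,x\xi\rangle_\M\in\range$. First I would record two facts about the specified vector. By the previous lemma $b\xi=\xi b$ for every $b\in\range$, and the inner-product axioms together with $\langle\xi,b\xi\rangle_\M=b$ give $\langle b_1\xi,b_2\xi\rangle_\M=b_1^*b_2$; in particular $b\xi=0$ forces $b=0$, so $b\mapsto b\xi$ is a bijection of $\range$ onto $\range\xi$ that respects the bimodule structure. This is the identification through which the statement ``$p\in\range$'' is to be read.

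Next I would compute the orthogonal projection $P$ of $\M$ onto $\range\xi$ explicitly, the claim being that $Pv=\langle\xi,v\rangle_\M\,\xi$ for all $v\in\M$. To verify it one checks that $v-\langle\xi,v\rangle_\M\xi$ is orthogonal to every $b\xi$: using the derived identity $\langle b\xi,w\rangle_\M=b^*\langle\xi,w\rangle_\M$, which follows from $b\xi=\xi b$ and properties 2 and 3 of the inner product, both $\langle b\xi,v\rangle_\M$ and $\langle b\xi,\langle\xi,v\rangle_\M\xi\rangle_\M$ equal $b^*\langle\xi,v\rangle_\M$, so their difference vanishes. Hence $P$ has the stated form and $p=P\,x|_{\range\xi}$ is the compression of $x$ to $\range\xi$.

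The key step is to exploit that $x\in L(\M)$ is adjointable and therefore a right $\range$-module map: from $\langle x(wb),y\rangle_\M=\langle wb,x^*y\rangle_\M=b^*\langle xw,y\rangle_\M=\langle (xw)b,y\rangle_\M$ for all $y$ one deduces $x(wb)=x(w)b$. Combining this with $b\xi=\xi b$, I would compute, for each $b\in\range$,
$$p(b\xi)=P\big(x(\xi b)\big)=P\big(x(\xi)b\big)=\langle\xi,x(\xi)b\rangle_\M\,\xi=\langle\xi,x\xi\rangle_\M\,b\,\xi=(cb)\xi.$$
Under the identification of the first paragraph this says precisely that $p$ is left multiplication by $c=\phi_\xi(x)$, i.e.\ $p\in\range$; selfadjointness of $x$ then makes $c$ selfadjoint, but it plays no role in the conclusion itself.

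I expect the only real subtlety to be bookkeeping with the one-sided linearity conventions of the Hilbert-module inner product, in particular keeping the projection formula and the ``pull $b$ through $x$'' step consistent, rather than anything structurally deep. Once adjointability (hence right-module linearity) and the identity $b\xi=\xi b$ are in hand, the identification of $p$ with an element of $\range$ is essentially forced.
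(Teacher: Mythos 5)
Your proof is correct and follows essentially the same route as the paper: the paper's one-line computation $pb=\langle \xi, pb\xi\rangle_\M=\langle \xi, p\xi b\rangle_\M=\langle \xi, p\xi\rangle_\M b$ is exactly your final display, with the identification $\range\xi\cong\range$, the projection formula, and the right-$\range$-linearity of adjointable maps left implicit. You have merely supplied the bookkeeping that the paper omits.
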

\begin{proof}
$$pb=\langle \xi, pb\xi\rangle_\M=\langle \xi, p\xi b\rangle_\M=\langle \xi, p\xi\rangle_\M b, $$
for all $b\in \range.$
\end{proof}

\begin{notation}\normalfont
We will denote the set $\{b\in \range|\frac{b-b^*}{i}>\epsilon 1_\range,\,\,\text{for some}\,\, \epsilon>0\}$  by $\mathbb{H}^+$ and $\mathbb{H}^-=-\mathbb{H}^+.$
\end{notation}

\begin{definition}\normalfont
Given a distribution $\mu\in\cbd$ such that $\mu=\mu_a$, where $a$ is $\range$-valued random variable in $(\domain,\E)$. The $\range$-Cauchy transform $G_{\mu,1}$ of $\mu$ is a map from $ \mathbb{H}^+\rightarrow \mathbb{H}^-$ defined by
$$ G_{\mu,1}(b)=\E[(b1_\domain-a)^{-1}],$$
whereas $F_{\mu,1}(b)=(G_{\mu,1}(b))^{-1}: \mathbb{H}^+\rightarrow \mathbb{H}^+$ is called the $\range$-reciprocal Cauchy transform of $\mu.$
\end{definition}

Given a distribution $\mu\in\cbd$, following the construction of Section 1.1 in \cite{Dykema}, there exists  a  $\range$-$\range$ Hilbert bimodule with a specified vector $(\M,\xi)$ and a element $x\in L(\M)$ such that $\mu$ is equal to the distribution $\mu_x$ of $x$ in $(L(\M),\phi_\xi).$  

The following proposition shows a relation between the reciprocal Cauchy transform and  the vector-conditional expectation $\phi_\xi$.

\begin{proposition}\normalfont\label{reciprocal Cauchy and representation} Let $b\in \mathbb{H}^+$ and  $x$ be a random variable in $(L(\M),\phi_\xi)$ such that $x$ has the following decomposition
$$x=\left(\begin{array}{cc}
p&a\\
a^*&T
\end{array}\right).$$
Then
    $$F_{\mu_x,1}(b)=b-p-a^*[(b-T)^{-1}|_{\MC}]a,$$
where $[(b-T)^{-1}|_{\MC}]$ is the inverse of the restriction of  $T-b$ to $\MC$.
\end{proposition}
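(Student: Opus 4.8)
The plan is to compute $F_{\mu_x,1}(b) = (G_{\mu_x,1}(b))^{-1}$ directly from the definition $G_{\mu_x,1}(b) = \phi_\xi[(b-x)^{-1}]$, where $\phi_\xi(\cdot) = \langle\xi,\cdot\,\xi\rangle_\M$ is the vector-conditional expectation, and the resolvent $(b-x)^{-1}$ is an element of $L(\M)$. The key observation is that $\phi_\xi(y) = \langle\xi, y\xi\rangle_\M$ extracts precisely the upper-left $\range\xi \to \range\xi$ corner of $y$ with respect to the orthogonal decomposition $\M = \range\xi \oplus \MC$, since $\xi\in\range\xi$ and the off-diagonal and $\MC$-corners are orthogonal to $\range\xi$. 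Hence $G_{\mu_x,1}(b)$ is exactly the $(1,1)$-entry of the operator matrix for $(b-x)^{-1}$, and the whole problem reduces to inverting the $2\times 2$ operator matrix and reading off that corner.

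First I would write $b - x$ in block form using the given decomposition of $x$, namely
\begin{equation}
b-x = \left(\begin{array}{cc} b-p & -a \\ -a^* & b-T \end{array}\right),
\end{equation}
where the scalar $b\in\mathbb{H}^+ \subset \range$ acts on the corner $\range\xi \cong \range$ as $b-p$ (using Lemma \ref{constant corner} to treat $p$ as a constant in $\range$), and where $b-T$ denotes the restriction to $\MC$. Next I would apply the standard Schur-complement formula for the $(1,1)$-entry of the inverse of a $2\times 2$ block matrix: the top-left corner of $(b-x)^{-1}$ equals the inverse of the Schur complement, that is
\begin{equation}
\big[(b-x)^{-1}\big]_{11} = \Big( (b-p) - (-a)(b-T)^{-1}(-a^*) \Big)^{-1} = \big( b - p - a(b-T)^{-1}a^* \big)^{-1}.
\end{equation}
Taking the reciprocal then gives $F_{\mu_x,1}(b) = b - p - a(b-T)^{-1}a^*$, which matches the stated formula up to the placement of $a$ versus $a^*$ (the statement writes $a^*[(b-T)^{-1}]a$, reflecting the convention for whether $a$ maps into or out of $\MC$; I would fix the orientation to agree with the definition $a:\range\xi\to\MC$ and adjust accordingly).

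The main obstacle is justifying the Schur-complement inversion rigorously in the Hilbert $C^*$-module setting rather than treating it as a purely formal $2\times 2$ matrix identity. Specifically, I must verify that the relevant inverses exist and are genuine adjointable operators: since $b\in\mathbb{H}^+$, the element $(b-x)$ is invertible in $L(\M)$ so that $G_{\mu_x,1}(b)$ is well defined, and one needs the Schur complement $b - p - a^*(b-T)^{-1}a$ to be invertible in $\range$ with the invertibility of $b-T$ on $\MC$ holding as well. I would establish this by noting that $\mathbb{H}^+$ consists of elements with strictly positive imaginary part, so that $b-T$ (and its compression) has a bounded inverse on $\MC$ and the Schur complement lands in $\mathbb{H}^+$, guaranteeing $F_{\mu_x,1}(b)\in\mathbb{H}^+$ as required by the definition. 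Once existence is secured, the algebraic identity for the corner of the inverse follows from the usual $UDL$-type factorization of $b-x$, and extracting the $(1,1)$-entry via $\phi_\xi$ completes the proof.
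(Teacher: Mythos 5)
Your proposal is correct and follows essentially the same route as the paper: write $b-x$ as a $2\times 2$ operator matrix over $\M=\range\xi\oplus\MC$, read off the $(1,1)$-corner of the inverse via the Schur complement, note (Lemma \ref{constant corner}) that this corner lies in $\range$, and invert to get $F_{\mu_x,1}(b)$. Your remark about the placement of $a$ versus $a^*$ correctly identifies a notational looseness already present in the paper's block decomposition, and your resolution matches the stated formula.
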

\begin{proof}
Notice that $p$ and $T$ are selfadjoint, hence $b-p$ is invertible in $\range$  and $b-T$ is invertible in $L(\MC)$. With a direct computation, we have that 
$$\left(\begin{array}{cc}
b-p&a\\
a^*&b-T
\end{array}\right)^{-1}=\left(\begin{array}{cc}
(b-p-a^*[(b-T)^{-1}|_{\MC}]a)^{-1} & \cdots\\
\cdots &\cdots\\
\end{array}\right). $$

By Lemma \ref{constant corner},  we have that $(b-p-a^*[(b-T)^{-1}|_{\MC}]a)^{-1}\in \range$.
 Therefore, we have
 \begin{align*}
 F_{\mu_x,1}(b)&= \phi_\xi((b-x)^{-1})^{-1}\\
 &= ((b-p-a^*[(b-T)^{-1}|_{\MC}]a)^{-1})^{-1}\\&=b-p-a^*[(b-T)^{-1}|_{\MC}]a.
 \end{align*}
The proof is done.
\end{proof}

\subsection{Fully matricial functions}  In this subsection, we introduce Voiculescu's fully matricial function theoery in $C^*$-algebra operator valued probility. We also refer to \cite{KV} for a good  development of the theory.   

Let $(\domain,\E)$ be a $C^*$-algebra operator valued probability space.  A fully noncommutative $\domain$-set $\Omega=(\Omega_n)_{n\geq 1}$ is a sequence  such that   $\Omega_n\in   \domain\otimes \mnc$ and $\Omega_{m+n}\cap  (\domain\otimes\mnc\oplus  \domain\otimes M_m(\C))=\Omega_n\oplus \Omega_m$ and $ (1_\domain\otimes S)\Omega_n ( 1_{\domain}\otimes S^{-1})=\Omega_n$ for all $n$, $S\in GL_n(\C)$. 
\begin{definition}\normalfont
 A fully matrial $\range$-valued function on a fully matricial $\domain$-set $\Omega$ is a sequence $(f_n)_{n\geq 1}$ such that
 \begin{itemize}
 \item[1.] $f_n: \Omega_n\rightarrow \range\otimes\mnc$ is a function for each $n$.
 \item[2.] If $g'\in \Omega_m$ and $g''\in\Omega_n$, then $f_{m+n}(g'\oplus g'')=f_n(g')\oplus f_m(g'')$,
 \item[3.] If $S\in GL_n(\C)$ and $g\in \Omega_n$, then $f_n \left((S\otimes 1_\domain)g (S^{-1}\otimes 1_{\domain})\right)= (S\otimes 1_\domain)f_n(g) (S^{-1}\otimes 1_{\domain})$.
 \end{itemize}
\end{definition}
\begin{example}\normalfont For each $n\geq 1$, 
let $(\domain\otimes \mnc,\E\otimes I_n)$ be a $\range\otimes\mnc$-valued probability space such that 
$$(\E\otimes I_n)[(a_{i,j})_{i,j=1,...,n}]=(\E[a_{i,j}])_{i,j=1,...,n},$$
and 
$$\mathbb{H}_n^+=\{b_n\in M_n(\range)|\frac{b_n-b_n^*}{i}>\epsilon 1_{n},\,\,\text{for some}\,\, \epsilon>0\}$$
and $a\in\domain$ be a selfadjoint random variable.
Let $\Omega_n=\{ (a\otimes\I_n-b_n)^{-1}|b_n\in \mathbb{H}_n^+ \}$. Then $\Omega=(\Omega)_{n\geq 1}$ is a fully noncommutative $\domain$-set and $(\E\otimes I_n)_{n\geq 1}$ is a fully matricial function on $\Omega$.
\end{example}
With the help of the  fully matricial function $(\E\otimes I_n)_{n\geq 1}$, we can define the following sequences of functions.
Given a selfadjoint random variable $x\in \domain$, let $\mu$ be the distribution of $a$.  
The matricial Cauchy transform of $\mu$ is a sequence of function  $G_{\mu}=(G_{\mu,n})_{n\geq 1}$ such that 
$$G_{\mu,n}(b_n)= \E\otimes I_n[(b_n-a\otimes I_n )^{-1}]: \mathbb{H}_n^+\rightarrow \mathbb{H}_n^-.$$
In addition, the sequence of functions  $F_\mu=(F_{\mu,n})_{n\geq 1}$, where 
$$F_{x,n}(b_n)=(G_{x,n}(b_n))^{-1}:\mathbb{H}_n^+\rightarrow -\mathbb{H}_n^+$$
is called the  matricial reciprocal Cauchy transform of $\mu$.   We will be using the following fact that illustrated in \cite{BPV}.
\begin{proposition}\normalfont
Let $\mu,\nu\in \Sigma_0(\range)$. Then $\mu=\nu$ if and only if $F_\mu=F_\nu$ on $(\mathbb{H}^+_n)_{n\geq 1}$.
\end{proposition}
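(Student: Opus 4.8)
The plan is to reduce the statement to the corresponding fact for the matricial Cauchy transforms and then to recover all the $\range$-valued moments of the distribution from the Cauchy transform by evaluating it at matrix arguments. First I would observe that on $(\mathbb{H}_n^+)_{n\geq 1}$ the identities $F_\mu = F_\nu$ and $G_\mu = G_\nu$ are equivalent: for each $n$ and each $b_n \in \mathbb{H}_n^+$ the value $G_{\mu,n}(b_n)$ lies in $\mathbb{H}_n^-$ and is therefore invertible, and $F_{\mu,n}(b_n) = G_{\mu,n}(b_n)^{-1}$, so passing to inverses is a bijection between the two families of values. The forward implication is then immediate, since if $\mu = \nu$ as maps on $\range\langle X\rangle$ then every quantity computed from the distribution, in particular each $G_{\mu,n}$, coincides. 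Thus the entire content is the reverse implication: $G_\mu = G_\nu$ on all of $(\mathbb{H}_n^+)_{n\geq 1}$ forces $\mu = \nu$.

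For the reverse implication I would use that, by definition, $\mu$ is the map $P \mapsto \E[P(a)]$ on $\range\langle X\rangle$, so $\mu = \nu$ is equivalent to the equality of all mixed moments $\E[b_0 a b_1 a \cdots a b_m]$ with $m \geq 0$ and $b_0, \ldots, b_m \in \range$, since these are the values of $\mu$ on the monomials that linearly span $\range\langle X\rangle$. The task is therefore to extract each such moment from the matricial Cauchy transform. The point is that the scalar transform $G_{\mu,1}$ only sees the ``symmetric'' moments $\E[(b^{-1}a)^k b^{-1}]$, whereas the full matricial family $(G_{\mu,n})_{n\geq 1}$ sees all mixed moments; realizing this is the heart of the argument.

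Concretely, fix $m$ and $b_0, \ldots, b_{m-1} \in \range$, set $n = m+1$, and evaluate $G_{\mu,n}$ at $b_n = D - \Lambda$, where $D = \mathrm{diag}(z_1 1_\range, \ldots, z_n 1_\range)$ with $z_i \in \C$ and $\Lambda = \sum_{l=1}^{m} b_{l-1}\otimes E_{l,l+1}$ is the strictly upper bidiagonal matrix over $\range$ carrying the chosen coefficients. For $\mathrm{Im}\, z_i$ large these arguments lie in $\mathbb{H}_n^+$ (the difference $\frac{b_n - b_n^*}{i}$ is a large positive diagonal plus a bounded selfadjoint perturbation coming from $\Lambda$), and since $a$ is bounded the resolvent expansion $(b_n - a\otimes I_n)^{-1} = \sum_{k=0}^{m} R(\Lambda R)^k$ converges, where $R = \mathrm{diag}(r_1, \ldots, r_n)$ and $r_i = (z_i 1_\domain - a)^{-1}$; the sum terminates at $k = m$ because $\Lambda$ is nilpotent. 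Reading off the $(1,n)$-entry after applying $\E \otimes I_n$, only the single path $1 \to 2 \to \cdots \to n$ contributes, giving
$$\big(G_{\mu,n}(b_n)\big)_{1,n} = \E\big[r_1 b_0 r_2 b_1 r_3 \cdots b_{m-1} r_{m+1}\big].$$
Expanding each $r_i = \sum_{j\geq 0} z_i^{-j-1} a^j$ and collecting powers, the coefficient of $z_1^{-j_0-1}\cdots z_{m+1}^{-j_m-1}$ is exactly the mixed moment $\E[a^{j_0} b_0 a^{j_1} b_1 \cdots b_{m-1} a^{j_m}]$.

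Since $G_{\mu,n} = G_{\nu,n}$ on $\mathbb{H}_n^+$, these matrix entries agree as analytic functions of $(z_1, \ldots, z_n)$ on the region where $\mathrm{Im}\, z_i$ is large, hence their multivariable expansions at infinity agree coefficientwise; therefore every mixed moment of $\mu$ equals the corresponding moment of $\nu$. As $m$ and the $b_i$ were arbitrary, and every monomial in $\range\langle X\rangle$ produces a moment of the displayed form, I conclude $\mu = \nu$. The step I expect to be the main obstacle is the careful bookkeeping that isolates a single moment: a purely scalar argument ($D = z\, 1_n$) only yields the sums $\sum_{j_0 + \cdots + j_m = N} \E[a^{j_0} b_0 \cdots a^{j_m}]$ at a fixed total degree, so the use of distinct diagonal entries $z_1, \ldots, z_n$ — together with checking that these still lie in $\mathbb{H}_n^+$ and that the now multivariable series converges — is exactly what disentangles the individual mixed moments.
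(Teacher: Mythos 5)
Your argument is correct, but note that the paper does not actually prove this proposition at all: it is stated as a known fact imported from the reference [BPV] (``We will be using the following fact that illustrated in \cite{BPV}''), so there is no internal proof to compare against. What you have written is the standard self-contained justification of why the fully matricial Cauchy transform determines an operator-valued distribution, and the key device --- evaluating $G_{\mu,n}$ at $D-\Lambda$ with $\Lambda$ a strictly upper bidiagonal nilpotent matrix carrying the coefficients $b_0,\dots,b_{m-1}$, so that the terminating expansion $\sum_{k}R(\Lambda R)^k$ puts exactly the single product $\E[r_1b_0r_2\cdots b_{m-1}r_{m+1}]$ in the $(1,n)$ corner --- is precisely the mechanism that makes the matricial family strictly stronger than the scalar transform, as you correctly emphasize. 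Two small points worth tightening: in the forward direction, $G_{\mu,n}(b_n)$ is only \emph{literally} computed from the moments for $b_n$ in a neighborhood of infinity (via the norm-convergent Neumann series and norm-continuity of $\E$); for general $b_n\in\mathbb{H}_n^+$ one should invoke analyticity and the connectedness of $\mathbb{H}_n^+$ to conclude $G_{\mu,n}=G_{\nu,n}$ everywhere. And in the reverse direction the coefficient identification requires the identity theorem in several variables on the polydomain $\{|z_i|>\|a\|\}$, applied on its intersection with the region where all $\mathrm{Im}\,z_i$ are large; this is routine but should be said. Neither issue is a gap in substance, and the proof stands.
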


The matricial R-transform  $R=(R_{\mu,n})_{n\geq 1}$ of $\mu\in \Sigma_0(\range)$ is a sequence of functions defined as follows:
$$R_{\mu,n}(b_n)=G_{\mu,n}^{\langle-1\rangle}(b_n)-b^{-1},$$
where $G_{\mu,n}^{\langle-1\rangle}$ is the left inverse of $G_{\mu,n}$, namely $G_{\mu,n}^{\langle-1\rangle}(G_{\mu,n}(b_n))=b_n$ for all $b_n\in \mathbb{H}^+_n$. In general,  $R_{\mu,n}$ is not defined on $ \mathbb{H}^+_n $, but in a uniform neighborhood of $0$.  The  matricial Voiculescu transform $\phi_{\mu}=(\phi_{\mu,n})_{n\geq 1}$ is defined as $\phi_{\mu,n}(b)=R_{\mu,n}(b^{-1})$. Then, we have 
$$\phi_{\mu,n}(b_n)=F_{\mu,n}^{\langle-1\rangle}(b_n)-b_n,$$
where $F_{\mu,n}^{\langle-1\rangle}$ is the left inverse of $F_{\mu,n}$.
As the linearization property of the classical R-transform, we have
$$R_{\mu\bp\nu,n}=R_{\mu,n}+R_{\nu,n}$$
and 
$$\phi_{\mu\bp\nu,n}=\phi_{\mu,n}+\phi_{\nu,n},$$
for all $n\geq 1.$

\section{Operator valued independence relations and their matricial extensions}
In this section, we introduce a class of independence relations include free, boolean and monotone independence.  
\begin{definition}\normalfont
Let $\I$ be an index set. 
 We call $$\K(\I)=\{\emptyset\}\amalg\{(i_1,\cdots,i_n)\in\I^n|i_k\neq i_{k+1}\,\, \text{for}\,\,k=1,\cdots,n-1, n\in \N \}$$
 the set of series associated with $\I$.  
\end{definition}
For each $i\in \I$, let  $(\M_i,\xi_i) $ be a $\range$-$\range$ bimodule with a specified vector $\xi_i$ such that  $\langle b_1\xi_i,b_2\xi_i\rangle_{\M_i}=b_1^*b_2$ for all $b_1,b_2\in \range$, where $ \langle \cdot,\cdot\rangle_{\M_i}$ is the inner product on $\M_i$. The reduced free product of $(\M_i,\xi_i)_{i\in\I} $  with amalgamation over $\range$ is  given by
$$\M= \range \xi\oplus \bigoplus\limits_{n=1}^\infty\bigoplus\limits_{i_1\neq i_2\neq \cdots \neq i_n} \MC_{i_1}\otimesb\cdots \otimesb\MC_{i_n}.$$
where the inner product $\langle \cdot, \cdot \rangle_\M$  on $\M$ is recursively defined as follows:
$$\langle m_1\otimes\cdots \otimes m_p, m_1'\otimes\cdots \otimes m_q' \rangle_\M=\delta_{p,q}\delta_{i_1,j_1} \langle m_2\otimes\cdots \otimes m_p, \langle m_1,m_1' \rangle_{\M_{i_1}} m_2'\otimes\cdots \otimes m_q' \rangle_{\M},$$
where $m_k\in \MC_{i_k}$ for $k=1,\cdots p$ and $m_k'\in\MC_{j_k}$ for $k=1,...q.$
$(\M,\xi) $ is a $\range$-$\range$ bimodule with the specified vector $\xi$ such that $\langle b_1\xi,b_2\xi\rangle_{\M}=b_1^*b_2$
\begin{definition}\normalfont
Given $\ii=(i_1,\cdots,i_m)\in \K(\I)$, the $\ii$-projection $P^{\M}_{\ii}$ is the orthogonal projection from $\M$ onto its subspace 
$$ \MC_{i_1}\otimesb \cdots \otimesb\MC_{i_m}.$$
For convenience, $\ii=\{\emptyset\}$ when $m=0$, and $P^{\M}_{\emptyset}$ is the orthogonal projection onto the subspace $ \range \xi$.\\
 Let  $\J\subset \K(\I)$ be  a subset of $\K(\I)$.  We denote by 
 $$P_{\J}=\sum\limits_{\ii\in \J}P^{\M}_\ii.$$ 
\end{definition}

For each $i\in \I$, let 
$$ \MC(i)=  \bigoplus\limits_{n=1}^\infty\bigoplus\limits_{i\neq i_1\neq i_2\neq \cdots \neq i_n} \MC_{i_1}\otimesb\cdots \otimesb\MC_{i_n},$$
and 
$$\M(i)=\range \xi \oplus \MC(i).$$
Then, for each $i$, let $V_i: \M_i\otimesb \M(i)\rightarrow \M$ be the unitary operator defined  as 
$$\begin{array}{cl}
V_i:& \xi_i\otimesb\xi\rightarrow \xi\\
V_i:&\xi_i\otimesb m(i)\rightarrow m(i)\\
V_i:& m_i\otimesb \xi\rightarrow m_i\\
V_i:& m_i\otimesb m(i) \rightarrow  m_i\otimesb m(i),
\end{array} $$ 
where $m(i)\in \MC(i)$, $m_i\in \MC_i$.\\
Let $\lambda_i:L(\M_i)\rightarrow  L(\M)$ be the $*$-homomorphism given by
$$\lambda_i(a)=V_i (a\otimes 1_{\M(i)})V_i ^{-1},$$
where $1_{\M(i)}$ is the identity map on $\M$. It is well-know that the family of $\{\lambda_i(L(\M_i))\}_{i\in I}$ is freely independent over $\range$ with respect to $\phi_{\xi}(\cdot)=\langle \xi,\cdot\xi\rangle_{\M}$.

\begin{definition}\normalfont
Let $\J=\{\J_i\subset \K(\I)\}$ be a family of subsets of $\K(\I)$. We say that $\J$ is $\I$-compatible if  for each $i\in \I$ we have that $(i\neq i_1,\cdots,i_m)\in \J_i$ if and only if 
$(i, i_1,\cdots,i_m)\in J_i.$
 \end{definition}

\begin{proposition}\normalfont
Let $\J=\{\J_i\subset \K(\I)\}$ be a  $\I$-compatible  family of subsets of $\K(\I)$.  Then, for all $i\in\I$,  $P^{\M}_{\J_i}$ is commuting with all elements in $\lambda_i(L(\M_i))$.
\end{proposition}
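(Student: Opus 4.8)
The plan is to diagonalize $P^{\M}_{\J_i}$ through the unitary $V_i$, showing that it becomes an operator acting only on the second tensor leg $\M(i)$, and hence commuting with $a\otimes 1_{\M(i)}$ for every $a\in L(\M_i)$.

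First I would record the tensor decomposition of $\M$ induced by $V_i$. Splitting $\M_i=\range\xi_i\oplus\MC_i$ and $\M(i)=\range\xi\oplus\MC(i)$, the four defining cases of $V_i$ show that it carries the four summands of $\M_i\otimesb\M(i)$ respectively onto $\range\xi$, onto $\MC(i)$, onto $\MC_i$, and onto $\MC_i\otimesb\MC(i)$, which is exactly the decomposition $\M=\range\xi\oplus\MC(i)\oplus\MC_i\oplus(\MC_i\otimesb\MC(i))$ of the full free product. Under this identification I would track each series subspace $\MC_{k_1}\otimesb\cdots\otimesb\MC_{k_p}$: those with $k_1\neq i$ (together with $\range\xi$) are precisely the subspaces of $\M(i)$, while those with $k_1=i$ are obtained by prepending $i$ to a series of $\M(i)$. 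Writing $P^{\M(i)}_{\mathbf s}$ for the projection of $\M(i)$ onto the subspace labelled by an $\M(i)$-series $\mathbf s$ (that is, $\mathbf s=\emptyset$, or $\mathbf s=(s_1,\dots,s_m)$ with $s_1\neq i$), a check against the four rules for $V_i$ gives the key identity
\begin{equation*}
V_i\bigl(1_{\M_i}\otimes P^{\M(i)}_{\mathbf s}\bigr)V_i^{-1}=P^{\M}_{\mathbf s}+P^{\M}_{(i,\mathbf s)},
\end{equation*}
where $(i,\mathbf s)$ denotes the series obtained by prepending $i$ (so $(i,\emptyset)=(i)$): the $\range\xi_i$-leg produces $P^{\M}_{\mathbf s}$ and the $\MC_i$-leg produces $P^{\M}_{(i,\mathbf s)}$.

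Next I would exploit the combinatorics of $\K(\I)$. The assignment $\mathbf s\mapsto(i,\mathbf s)$ is a bijection from the $\M(i)$-series onto the series whose first entry is $i$, so $\K(\I)$ is partitioned into the two-element blocks $\{\mathbf s,(i,\mathbf s)\}$ as $\mathbf s$ ranges over the $\M(i)$-series. The $\I$-compatibility hypothesis is exactly the statement that $\mathbf s\in\J_i$ if and only if $(i,\mathbf s)\in\J_i$; hence $\J_i$ is a union of whole blocks. Summing the key identity over the blocks contained in $\J_i$ and setting $Q=\sum_{\mathbf s}P^{\M(i)}_{\mathbf s}$ (the sum over $\M(i)$-series $\mathbf s\in\J_i$), which is again an orthogonal projection, yields
\begin{equation*}
P^{\M}_{\J_i}=P_{\J_i}=\sum_{\mathbf s}\bigl(P^{\M}_{\mathbf s}+P^{\M}_{(i,\mathbf s)}\bigr)=V_i\bigl(1_{\M_i}\otimes Q\bigr)V_i^{-1}.
\end{equation*}

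Finally I would conclude by the tensor-leg argument. For any $a\in L(\M_i)$ one has $\lambda_i(a)=V_i(a\otimes 1_{\M(i)})V_i^{-1}$, and on the interior tensor product $\M_i\otimesb\M(i)$ the operators $a\otimes 1_{\M(i)}$ and $1_{\M_i}\otimes Q$ commute, their product in either order being $a\otimes Q$; here $1_{\M_i}\otimes Q$ is legitimate because $Q$, being a projection onto a sub-$\range$-$\range$-bimodule, commutes with the left $\range$-action used to form $\otimesb$. Conjugating by $V_i$ then gives $\lambda_i(a)P_{\J_i}=P_{\J_i}\lambda_i(a)$, as desired. I expect the only real work to lie in the bookkeeping of the key identity, in particular the boundary case $\mathbf s=\emptyset$ (which pairs $\range\xi$ with $\MC_i$, i.e. $\emptyset$ with $(i)$) and the verification that $1_{\M_i}\otimes Q$ is a bona fide adjointable operator; the conceptual content is simply that $V_i$ converts the prepend-$i$ symmetry encoded by $\I$-compatibility into ``acting on the $\M(i)$-leg only''.
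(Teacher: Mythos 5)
Your proposal is correct and follows essentially the same route as the paper: both identify $P^{\M}_{\J_i}$ as $V_i(1_{\M_i}\otimes Q)V_i^{-1}$, where $Q$ is the projection onto the subspaces of $\M(i)$ labelled by the series of $\J_i$ not beginning with $i$ (the paper's $P^{\M}_{\J_i'}$), with $\I$-compatibility guaranteeing that $\J_i$ is a union of the pairs $\{\mathbf s,(i,\mathbf s)\}$, and then conclude by commuting tensor legs and conjugating back by $V_i$. Your write-up just makes the block decomposition and the boundary case $\mathbf s=\emptyset$ more explicit than the paper does.
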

\begin{proof}
Since the family $\J=\{\J_i\subset \K(\I)\}$ is   $\I$-compatible, for each $i\in \I$, we have that $P^\M_{\J_i}$ is the orthogonal projection onto the subspaces
$$  \bigoplus\limits_{(i\neq i_1,i_2,\cdots, i_m)\in \J_{i}} \MC_{i_1}\otimesb\cdots \otimesb\MC_{i_n}\oplus \bigoplus\limits_{(i\neq i_1,i_2,\cdots, i_m)\in \J_{i}} \MC_i\otimes\MC_{i_1}\otimesb\cdots \otimesb\MC_{i_n}.$$
Let $\J_i'=\{( i_1,i_2,\cdots, i_m)\in \J_{i}|i_1\neq i, m\in \N\}$. Then we have that 
\begin{align*}
\lambda_i(a)P^\M_{\J_i}&= V_i (a\otimes 1_{\M(i)})V_i ^{-1}P^\M_{\J_i}\\
&= V_i (a\otimes 1_{\M(i)})(1_{\M_i}\otimes P^\M_{\J_i'})V_i ^{-1}\\
&= V_i (1_{\M_i}\otimes P^\M_{\J_i'})(a\otimes 1_{\M(i)})V_i ^{-1}\\
&=  P^\M_{\J_i}V_i(a\otimes 1_{\M(i)})V_i ^{-1}.
\end{align*}
The proof is done.
\end{proof}
By the commutativity of the projections defined by an $\I$-compatible family $\J$, we have the following result.
\begin{corollary}\normalfont
Let $\J=\{\J_i\subset \K(\I)\}$ be an $\I$-compatible  family of subsets of $\K(\I)$.  Then, for each $i\in I$, $P_{\J_i}\lambda_i(\cdot)$ is a homomorphism from $L(\M_i)$ into $L(\M)$.
\end{corollary}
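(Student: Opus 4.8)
The plan is to verify directly that the map $a\mapsto P^{\M}_{\J_i}\lambda_i(a)$ respects the two algebraic operations of $L(\M_i)$, drawing entirely on the commutation relation established in the preceding proposition. Additivity and $\C$-linearity are immediate and require no separate argument: since $\lambda_i$ is a $*$-homomorphism (in particular linear) and left multiplication by the fixed operator $P^{\M}_{\J_i}$ is linear on $L(\M)$, the composite $P^{\M}_{\J_i}\lambda_i(\cdot)$ is linear. So the only substantive point is multiplicativity.

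Before computing, I would record the two inputs that make everything work. First, by the preceding proposition the $\I$-compatibility of $\J$ forces $P^{\M}_{\J_i}$ to commute with every element of $\lambda_i(L(\M_i))$; in particular $\lambda_i(a)P^{\M}_{\J_i}=P^{\M}_{\J_i}\lambda_i(a)$ for all $a\in L(\M_i)$. Second, $P^{\M}_{\J_i}$ is an orthogonal projection and hence idempotent, $(P^{\M}_{\J_i})^2=P^{\M}_{\J_i}$. With these, for $a,b\in L(\M_i)$ I would compute
\begin{align*}
\bigl(P^{\M}_{\J_i}\lambda_i(a)\bigr)\bigl(P^{\M}_{\J_i}\lambda_i(b)\bigr)
&=P^{\M}_{\J_i}\bigl(\lambda_i(a)P^{\M}_{\J_i}\bigr)\lambda_i(b)\\
&=P^{\M}_{\J_i}\bigl(P^{\M}_{\J_i}\lambda_i(a)\bigr)\lambda_i(b)\\
&=(P^{\M}_{\J_i})^2\lambda_i(a)\lambda_i(b)\\
&=P^{\M}_{\J_i}\lambda_i(ab),
\end{align*}
where the second line uses the commutation relation, the third collapses the two projections by idempotency, and the last uses that $\lambda_i$ is a homomorphism. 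This is exactly the assertion that $P^{\M}_{\J_i}\lambda_i(\cdot)$ is multiplicative, which together with linearity completes the proof.

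I do not anticipate any genuine obstacle here: the corollary is a formal consequence of the commutation relation proved in the proposition, and its entire content lies in moving one copy of $P^{\M}_{\J_i}$ past $\lambda_i(a)$ and then merging the two adjacent projections into one. The only things to keep straight are that $P^{\M}_{\J_i}$ is idempotent as an orthogonal projection and that the commutation relation is available for $\lambda_i(a)$ with $a$ arbitrary, both of which are already in hand. (One should note that the resulting homomorphism is in general non-unital, since $P^{\M}_{\J_i}\lambda_i(1_{\M_i})=P^{\M}_{\J_i}\neq 1_{\M}$ for a proper projection, but this does not affect the claim as stated.)
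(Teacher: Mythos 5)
Your proof is correct and follows exactly the route the paper intends: the corollary is stated as an immediate consequence of the preceding proposition (the commutation of $P^{\M}_{\J_i}$ with $\lambda_i(L(\M_i))$), and your computation combining that commutation with the idempotency of the orthogonal projection is the standard argument being invoked. Nothing to add.
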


Therefore, the following definition is well defined.
\begin{definition}\normalfont
Let $\{\domain_i|i\in \I\}$ be a family of (necessarily unital) subalgebras of a $\range$-valued probability space $(\domain,\E)$. 
 Given an $\I$-compatible family $\J=\{\J_i\subseteq \K\}$ of subsets of $\K$. 
  We say that $\{\domain_i|i\in \I\}$ are $\range$-valued $\J$-independent if there is a family $(\M_i,\xi_i)$ of $\range$-$\range$ bimodules with specified vectors and $\range$-linear homomorphisms $\gamma_i:\domain_i\rightarrow L(\M_i)$, on the reduced free product $(\M,\xi)=*_{i\in I}(\M_i,\xi_i)$, we have 
$$ \E(a_1a_2\cdots a_m)=\phi_\xi(P^\M_{\J_{i_1}}\lambda_{i_1}(\gamma_{i_1}(a_1))\cdots P^\M_{\J_{i_m}}\lambda_{i_m}(\gamma_{i_m}(a_m))) $$
whenever $a_k\in \domain_{i_k}$ for $1\leq k\leq m$.
\end{definition}

\begin{example}\label{Example of freeness}\normalfont For  each $i\in \I$, let $\J_i=\K(\I)$. In this case, the   $\J$-independence is the free independence. 
\end{example}

\begin{example}\label{Example of boolean}
 \normalfont  For  each $i\in \I$, let $\J_i=\{\emptyset, (i)\}$. In this case, the  $\J$-independence is the Boolean independence.
\end{example}

\begin{example}\label{Example of monotone}
 \normalfont Let $\I=\{1,2\}$ with the natural order,  $J_1=\{\emptyset, (1)\}$ and $\J_2=\{\emptyset, (1),(2),(2,1)\}$. 
In this case, the $\J$-independence is the monotone independence.
\end{example}

Now, we show that the $\J$-independence relations have a matricial extension property on which allows us to apply the  matricial functions.

Again, let $(\domain,\E)$ be a $\range$-valued probability space. Let $\mnc$ be the algebra of $n\times n$ matrices. For $1\leq k,l\leq n$,  let $e(k,l)$ be the element whose $(i,j)$-th entry is $1$ and the other entries are $0$.    Let $\E_n=\E\otimes I_n$ be the  map from $\domain\otimes \mnc$  to $\range\otimes \mnc$ defined as 
$$\E_n[a\otimes e(k,l)]=\E[a]\otimes e(k,l),$$
whenever $a\in \domain$ and  $1\leq k,l\leq n$.

Let $M_i=\range\xi_i\oplus\MC_i $ be the $\range$-$\range$ bimodule defined before. Then  
$$\M_i\otimes \mnc=\range\xi_i\otimes \mnc\oplus\MC_i\otimes \mnc$$ is a 
$\range\otimes \mnc$-$\range\otimes \mnc$ bimodule with the specified vector $ \xi_i\otimes I_n$  such that 
$$ (b_1\otimes e(k_1,l_1) )( m\otimes e(k,l))(b_2\otimes e(k_2,l_2)) =(b_1mb_2)\otimes(e(k_1,l_1)e(k,l)e(k_2,l_2)),$$
for $b_1,b_2\in \range$,  $m\in \M_i$ and $1\leq k_1,k_2,k,l_1,l_2,l\leq n.$

 Notice that  given two $\range$-$\range$ bimodules $\M_1$ and $\M_2$, we have that 
$$\Big(\M_1\otimes\mnc\Big)\otimes_{\range\otimes \mnc} \Big(\M_2\otimes\mnc\Big)= \Big(\M_1\otimes_\range \M_2\Big)\otimes_\C\mnc,$$
where $\Big(m_1\otimes e(k_1,l_1)\Big)\otimes_{\range\otimes \mnc} \Big(m_2\otimes e(k_2,l_2)\Big)$ is identified with $ \Big(m_1\otimes_\range m_2\Big)\otimes (e(k_1,l_1)e(k_2,l_2)).$ Therefore, the reduced free product of  $\Big(\M_i\otimes \mnc,\range\xi_i\otimes\mnc\Big)_{i\in \I}$ is 
$$\M\otimes \mnc= \range \xi \otimes \mnc\oplus \bigoplus\limits_{n=1}^\infty\bigoplus\limits_{i_1\neq i_2\neq \cdots \neq i_n} \MC_{i_1}\otimesb\cdots \otimesb\MC_{i_n}\otimes \mnc.$$
Let $(i_1,\cdots,i_n)\in \K(\I)$. Then  $P^{\M\otimes \mnc}_{(i_1,\cdots,i_n)}$  is the orthogonal projection from $\M\otimes \mnc$ onto the subspace 
$$ \MC_{i_1}\otimesb\cdots \otimesb\MC_{i_n}\otimes \mnc.$$

\begin{proposition}\label{matricial extension}\normalfont
Let $\I$ be an index set. Given an $\I$-compatible family $\J=\{\J_i\subseteq \K(\I)|i\in\I\}$.  Suppose that $\{\domain_i|i\in \I\}$ is  a $\range$-valued $\J$-independent family of subalgebras of a $\range$-valued probability space $(\domain,\E)$. Then, for each $n\in \N$, $\{\domain_i\otimes \mnc|i\in \I\}$ a $\range\otimes\mnc$-valued $\J$-independent family of subalgebras of the $\range\otimes \mnc$-valued probability space $(\domain\otimes \mnc,\E_n)$.
\end{proposition}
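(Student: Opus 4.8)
The plan is to produce, for the amplified family $\{\domain_i\otimes\mnc\}_{i\in\I}$, an explicit realization witnessing $\J$-independence and then to verify the defining moment identity by reducing to elementary tensors. The natural candidate is the family $(\M_i\otimes\mnc,\,\xi_i\otimes I_n)_{i\in\I}$ of $\range\otimes\mnc$-$\range\otimes\mnc$ bimodules with specified vectors (already described above the statement), together with the homomorphisms $\gamma_i\otimes\mathrm{id}_{\mnc}:\domain_i\otimes\mnc\rightarrow L(\M_i\otimes\mnc)=L(\M_i)\otimes\mnc$. Since each $\gamma_i$ is a $\range$-linear homomorphism, $\gamma_i\otimes\mathrm{id}_{\mnc}$ is at once a $\range\otimes\mnc$-linear homomorphism, so the only real content is the moment identity.

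First I would set up the canonical identifications that split the matrix factor off from every object in sight. Using the computation of the reduced free product $\M\otimes\mnc$ recorded above, I would check three points: (i) the specified vector of $\M\otimes\mnc$ is $\xi\otimes I_n$, and its vector expectation satisfies $\phi_{\xi\otimes I_n}=\phi_\xi\otimes\mathrm{id}_{\mnc}$; (ii) for every series $\ii\in\K(\I)$ one has $P^{\M\otimes\mnc}_{\ii}=P^\M_{\ii}\otimes I_n$, which is exactly the description of the subspace $\MC_{i_1}\otimesb\cdots\otimesb\MC_{i_n}\otimes\mnc$ given above, so that summing over $\ii\in\J_i$ yields $P^{\M\otimes\mnc}_{\J_i}=P^\M_{\J_i}\otimes I_n$; and (iii) the left representation $\lambda_i^{(n)}$ attached to the amplified free product equals $\lambda_i\otimes\mathrm{id}_{\mnc}$. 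Point (iii) is the one requiring care: under the identification $(\M_i\otimes\mnc)\otimes_{\range\otimes\mnc}(\M(i)\otimes\mnc)=(\M_i\otimesb\M(i))\otimes\mnc$ recorded above, I would verify that the amplified intertwining unitary $V_i^{(n)}$ coincides with $V_i\otimes I_n$ by evaluating on the four defining cases of $V_i$ tensored with matrix units, whence $\lambda_i^{(n)}(a\otimes e(k,l))=V_i^{(n)}\big((a\otimes e(k,l))\otimes 1\big)(V_i^{(n)})^{-1}=\lambda_i(a)\otimes e(k,l)$.

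With these identifications in hand, the moment identity follows by $\range\otimes\mnc$-multilinearity, so it suffices to test it on elementary tensors $A_k=a_k\otimes e(p_k,q_k)$ with $a_k\in\domain_{i_k}$. On the representation side each factor becomes $P^{\M\otimes\mnc}_{\J_{i_k}}\lambda_{i_k}^{(n)}\big((\gamma_{i_k}\otimes\mathrm{id})(A_k)\big)=\big(P^\M_{\J_{i_k}}\lambda_{i_k}(\gamma_{i_k}(a_k))\big)\otimes e(p_k,q_k)$, so the ordered product factors as the product of the $\range$-operators tensored with the product $e(p_1,q_1)\cdots e(p_m,q_m)$ of matrix units. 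Applying $\phi_{\xi\otimes I_n}=\phi_\xi\otimes\mathrm{id}_{\mnc}$ and invoking the $\range$-valued $\J$-independence of $\{\domain_i\}$ collapses the operator part to $\E(a_1\cdots a_m)$, giving $\E(a_1\cdots a_m)\otimes(e(p_1,q_1)\cdots e(p_m,q_m))$; this is exactly $\E_n(A_1\cdots A_m)$ by the definition of $\E_n$ and the multiplication rule for matrix units. Hence both sides agree on elementary tensors, and by linearity in general.

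The main obstacle is item (iii), that is, checking that the intertwining unitaries, and hence the left representations, are compatible with the matrix amplification; everything else is bookkeeping once the projection and vector-state identifications are granted. I do not expect positivity of $\E_n$ to enter, since the argument is purely algebraic and only uses that $(\M_i\otimes\mnc,\,\xi_i\otimes I_n)$ is again a bimodule with specified vector.
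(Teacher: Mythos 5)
Your proposal is correct and follows essentially the same route as the paper: both reduce to elementary tensors $a_k\otimes e(k_s,l_s)$, use the identification $P^{\M\otimes\mnc}_{\ii}=P^{\M}_{\ii}\otimes 1_{\mnc}$ together with $\phi_{\xi\otimes I_n}=\phi_\xi\otimes\mathrm{id}_{\mnc}$, factor the matrix units out of the ordered product, invoke the $\range$-valued $\J$-independence, and finish by linearity. Your item (iii) on the compatibility of the intertwining unitaries with the amplification is a detail the paper's proof leaves implicit, but it is the same argument, just spelled out more carefully.
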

\begin{proof}
Let $\ii=(i_1,\cdots,i_m)\in \K$. Since  $P^{\M\otimes \mnc}_{\ii}$ is the orthogonal projection onto the subspace 
$$ \MC_{i_1}\otimesb\cdots \otimesb\MC_{i_m}\otimes \mnc$$
and   $P^{\M}_{\ii}$ is the orthogonal projection onto the subspace 
$$ \MC_{i_1}\otimesb\cdots \otimesb\MC_{i_m},$$
we have that  $P^{\M\otimes \mnc}_{\ii}=P^\M_{\ii}\otimes 1_{\mnc}$.\\
For $1\leq s\leq m$, let $\widetilde{a_{s}}=a_s\otimes e(k_s,l_s)\in \domain_{i_s}\otimes\mnc$, where $a_s\in \domain_{i_s}$.  We have 
\begin{align*}
\E_n[\widetilde{a_{1}}\cdots \widetilde{a_{m}}]&=\E[a_1\cdots a_n]\otimes (e(k_1,l_1)\cdots e(k_m,l_m))\\
&=\phi_\xi(P_{\J_{i_1}}\lambda_{i_1}(\gamma_{i_1}(a_1))\cdots P_{\J_{i_n}}\lambda_{i_m}(\gamma_{i_m}(a_m))) \otimes (e(k_1,l_1)\cdots e(k_m,l_m))\\
&=\langle \xi,P_{\J_{i_1}}\lambda_{i_1}(\gamma_{i_1}(a_1))\cdots P_{\J_{i_n}}\lambda_{i_m}(\gamma_{i_m}(a_m))\xi\rangle \otimes (e(k_1,l_1)\cdots e(k_m,l_m))\\
&=\langle \xi\otimes I_n,P_{\J_{i_1}}\lambda_{i_1}(\gamma_{i_1}(a_1))\otimes(e(k_1,l_1)\cdots P_{\J_{i_n}}\lambda_{i_m}(\gamma_{i_m}(a_m))\otimes e(k_m,l_m))(\xi\otimes I_n)\rangle\\
&=\langle \xi\otimes I_n,P^{(n)}_{\J_{i_1}}\lambda_{i_1,n}(\gamma_{i_1,n}(a_1)\otimes e(k_1,l_1)\cdots P^{(n)}_{\J_{i_m}}\lambda_{i_m}(\gamma_{i_m}(a_m)\otimes e(k_m,l_m))(\xi\otimes I_n)\rangle\\
&=\phi_{\xi\otimes I_n}( P^{(n)}_{\J_{i_1}}\lambda_{i_1,n}(\gamma_{i_1,n}(a_1)\cdots P^{(n)}_{\J_{i_m}}\lambda_{i_m,n}(\gamma_{i_m,n}(a_m)).
\end{align*} 

By linearity of the previous map, for each $s$,  $a_s\otimes e(k_s,l_s)$ can be replaced by an arbitrary element from $\domain_{i_s}$. The proof is done.
\end{proof}

\subsection{$k$-tuples of operator valued random variables and convolutions} In this subsection, we assume that $k$ is a positive integer.
Let $\range\langle X_1,\cdots, X_k\rangle$ be the set of noncommutative $\range$-valued polynomials in $k$ indeterminants, namely the linear span of $b_0X_{i_1}b_1X_{i_2}\cdots X_{i_n}b_n$ for $n\in\N\cup\{0\}$, $b_0,\cdots,b_n\in \range$ and $i_1,\cdots,i_n\in \{1,\cdots,k\}$.

\begin{definition}\normalfont
Let $x_1,\cdots,x_k$ be random variables from a $\range$-valued probability space $(\domain,\E)$. The joint distribution $\mu$ of $x_1,\cdots,x_k$ is a $\range$-linear map from $\range\langle X_1,\cdots, X_k\rangle$ to $\range$ defined as follows 
$$\mu[b_0X_{i_1}b_1X_{i_2}\cdots X_{i_n}b_n]=\E[b_0x_{i_1}b_1x_{i_2}\cdots x_{i_n}b_n].$$
\end{definition}

Let $X=\sum\limits_{l=1}^k x_l\otimes e(l,l)$. Then $X$ is a random variable in $(\domain \otimes \mnc,\E_n)$. 
One can easily see that the $\range\otimes\mnc$-valued distribution of $X$  is completely determined by the joint distribution of $(x_1,\cdots,x_k)$. On the other hand, we have that
\begin{align*}
&\E[x_{i_1}b_1\cdots b_{m-1}x_{i_m}] \otimes e(1,1)\\
=&(\E_n)[\Big((1_\domain\otimes e(1,i_1))X(1_\domain\otimes e(i_1,1))\Big) b_1\otimes e(1,1)  \cdots b_{m-1}\otimes e(1,1)\Big((1_\domain\otimes e(1,i_m))X(1_\domain\otimes e(i_m,1))\Big)].
\end{align*}

Therefore, the joint distribution of $x_1,\cdots,x_k$ can be decoded from the $\range\otimes\mnc$-valued distribution of $X$. Further, recall that the free additive convolutions  for $k$-tuples of random variables arise in the following way: 
Given joint distributions  $\mu, \nu$ of $k$-tuples of random variables, we can always find random variables $x_1,\cdots,x_k,y_1,\cdots,y_k$  in a $\range$-valued probability space $(\domain,\E)$ such that $\mu$ is equal to the joint distribution of $x_1,\cdots,x_k$ and $\nu$ is equal to the joint distribution of $y_1,\cdots,y_k$, and  the $\range$-algebra $\domain_1$ generated by $x_1,\cdots,x_k$ is freely independent from the $\range$-algebra $\domain_2$ generated by $y_1,\cdots,y_k$. 
Then, the free additive convolution $\mu\bp\nu$  is defined to be the joint distribution of $x_1+y_1,\cdots,x_k+y_k$. 
On the other hand, It is well know (or from  Proposition \ref{matricial extension}) that $\domain_1 \otimes\mnc$ is freely independent from 
$\domain_2\otimes\mnc$ in the $\range\otimes \mnc$-valued probability $(\domain\otimes\mnc,\E_n)$. 
It follows that $X=\sum\limits_{l=1}^k x_l\otimes e(l,l)$ and $Y=\sum\limits_{l=1}^k y_l\otimes e(l,l)$ are $\range\otimes \mnc$-freely independent in  $(\domain\otimes\mnc,\E\otimes I_n)$. 
Therefore,  the free additive convolution $\mu\bp\nu$  of $\mu$ and $\nu$ can be decoded from the free additive convolution of the $\range\otimes \mnc$-valued distributions $\mu_X$ and $\nu_Y$, where $\mu_X$ is $\range\otimes \mnc$-valued distribution of $X$ and $\nu_Y$ is $\range\otimes \mnc$-valued distribution of $X$. 

Now, we turn to introduce the additive convolutions associated with $\J$-independence.  
Even though the independence relations is well defined for subalgebras of a $\range$-valued probability, the $\J$-additive convolutions are not defined by $\J$-independence relations directly.

Let $\{\mu_i\}_{i\in I}$ be a family of $\range$-valued distributions, where $\I$ is a finite index set.  
Then, there are $\range$-valued operator valued probability spaces $(\domain_i,\E_i)_{i\in \I}$ and $x_i\in \domain_i$ such that for each $i$, $\mu_i$ is equal to the distribution of $x_i$. 
 For each $i$, let $(\M_i,\xi_i)$ be a $\range$-$\range$ bimodule with a specified vector such that there exits a $\range$-linear homomorphism $\gamma_i:\domain_i\rightarrow L(\M_i)$ and $\E_i[a]=\langle\xi_i, \gamma_i(a)\xi\rangle_{\M_i} $ for all $a\in\domain_i$.
 Let $(\M,\xi)$ be the reduced free product of $(\M_i,\xi)$ and $\lambda_i:L(\M_i)\rightarrow L(M)$ are the embedding isomorphisms.  
Given an $\I$-compatible family $\J=\{\J_i\subset \K(\I)|i\in \I\}$, then 
$$X=\sum\limits_{i\in\I} P^{\M}_{\J_i}\lambda_i(\gamma_i(x_i))\in L(\M).$$
Then we can defined $\mu$ to be the distribution of $X$ in $(L(\M),\langle \xi, \cdot \xi\rangle)$.  Now we show that $\mu$ is independent of the choices of $(\M_i,\xi_i)_{i\in \I}$. The proof follows Voiculescu's work in \cite{Voi1}.

\begin{definition}\normalfont
$\range$-morphisms  between $\range$-$\range$ bimodules with specified vector $(\M_1,\xi_1)$ and $(\M_2,\xi_2)$ are $\range$-$\range$-linear maps $S:\M\rightarrow\M'$, that is $S(b_1mb_2)=b_1S(m)b_2,$ 
for all $m\in \M_1$ and $b_1,b_2\in\range$,  such that
 $$S(\xi)=\xi'$$ and $$S(\MC_1)\subseteq {\MC_2}.$$
\end{definition}

\begin{lemma}\normalfont
Let $(\domain,\E)$ be a $\range$-valued probability space.   Let $(\M_1,\xi_1)$ and $ (\M_2,\xi_2)$ be $\range$-$\range$ bimodules with specified vectors such that there exist homomorphisms $\gamma_1:\domain\rightarrow L(\M_1)$ and $\gamma_2:\domain\rightarrow L(\M_2)$ such that 
$$\E[a]=\phi_{\xi_1}(\gamma_1(a))=\phi_{\xi_2}(\gamma_2(a)),$$
for all $a\in \domain.$ Then, there exists a $\range$-morphism $S:\M_1\rightarrow \M_2$ such that 
$$S\lambda_1(a)=\lambda_2(a)S$$
for $a\in \domain.$
\end{lemma}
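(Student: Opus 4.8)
The plan is to run an operator-valued GNS-type argument: the intertwining requirement forces the values of $S$ on the cyclic submodule generated by $\xi_1$, and the hypothesis $\phi_{\xi_1}\circ\gamma_1=\phi_{\xi_2}\circ\gamma_2=\E$ guarantees that this prescription is consistent. First I would record two preliminaries, taking $\gamma_1,\gamma_2$ to be unital $\range$-linear $*$-homomorphisms as in the $C^*$-setting. Since $\domain$ contains $\range$ via $b\mapsto b1_\domain$ and each $\gamma_i$ is unital and $\range$-linear, we get $\gamma_i(b)\xi_i=b\xi_i$, so $\range\xi_i$ is contained in the cyclic submodule $\mathcal{N}_i:=\overline{\gamma_i(\domain)\xi_i}$, which is invariant under $\gamma_i(\domain)$.

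The heart of the argument is the inner-product computation. For $a,a'\in\domain$,
\[
\langle \gamma_1(a)\xi_1,\gamma_1(a')\xi_1\rangle_{\M_1}=\phi_{\xi_1}(\gamma_1(a^*a'))=\E[a^*a']=\phi_{\xi_2}(\gamma_2(a^*a'))=\langle \gamma_2(a)\xi_2,\gamma_2(a')\xi_2\rangle_{\M_2},
\]
using that each $\gamma_i$ preserves adjoints together with the standing hypothesis. Hence the prescription $S_0(\gamma_1(a)\xi_1):=\gamma_2(a)\xi_2$ preserves all inner products; in particular, by positive-definiteness of $\langle\,\cdot\,,\,\cdot\,\rangle_{\M_2}$, whenever $\gamma_1(a)\xi_1=0$ one has $\E[a^*a]=0$ and therefore $\gamma_2(a)\xi_2=0$, so $S_0$ is well defined and isometric on $\gamma_1(\domain)\xi_1$, and I extend it by continuity to an isometry $\mathcal{N}_1\to\mathcal{N}_2$. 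It is then immediate that $S_0$ is $\range$-$\range$-linear, that $S_0\xi_1=\gamma_2(1_\domain)\xi_2=\xi_2$, and that $S_0$ intertwines the two representations on $\mathcal{N}_1$, since $S_0\gamma_1(a)\gamma_1(a')\xi_1=\gamma_2(aa')\xi_2=\gamma_2(a)S_0\gamma_1(a')\xi_1$. Moreover, because $S_0$ is isometric with $S_0\xi_1=\xi_2$, any $m\in\mathcal{N}_1$ orthogonal to $\range\xi_1$ satisfies $\langle\xi_2,S_0m\rangle_{\M_2}=\langle\xi_1,m\rangle_{\M_1}=0$, so $S_0$ carries $\mathcal{N}_1\cap\MC_1$ into $\MC_2$.

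It remains to extend $S_0$ from $\mathcal{N}_1$ to all of $\M_1$, and this is where I expect the main obstacle: in a Hilbert $C^*$-module a closed invariant submodule need not be orthogonally complemented, so I must first justify that $\mathcal{N}_1$ admits an orthogonal complement in $\M_1$. I expect to extract this from the splitting $\M_1=\range\xi_1\oplus\MC_1$ built into the data, and I note that in every application of the lemma the ambient bimodules are cyclic, whence $\mathcal{N}_1=\M_1$ and the difficulty evaporates. Granting complementation, let $P$ be the orthogonal projection onto $\mathcal{N}_1$; since $\gamma_1(\domain)$ is $*$-closed and $\mathcal{N}_1$ is invariant, $P$ commutes with $\gamma_1(\domain)$, so $\mathcal{N}_1$ is reducing. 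I would then set $S:=S_0P$, that is, $S_0$ on $\mathcal{N}_1$ and $0$ on its complement. Then $S$ is $\range$-$\range$-linear with $S\xi_1=\xi_2$ and $S(\MC_1)\subseteq\MC_2$ (on the complement the value $0$ already lies in $\MC_2$), and $S\gamma_1(a)=\gamma_2(a)S$ holds on all of $\M_1$, because each $\gamma_1(a)$ preserves both $\mathcal{N}_1$ and its complement while $S_0$ already intertwines on $\mathcal{N}_1$. Interpreting $\lambda_i(a)$ as the action $\gamma_i(a)$, this is exactly the asserted relation $S\lambda_1(a)=\lambda_2(a)S$.
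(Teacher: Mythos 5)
Your proposal is correct and follows essentially the same route as the paper: define $S_0(\lambda_1(a)\xi_1)=\lambda_2(a)\xi_2$ on the cyclic submodule, use $\phi_{\xi_1}\circ\gamma_1=\phi_{\xi_2}\circ\gamma_2=\E$ to see this preserves inner products (hence is well defined), and extend by zero on the orthogonal complement. You are in fact more careful than the paper, which silently assumes the closed cyclic submodule is orthogonally complemented in the Hilbert $C^*$-module — a point you rightly flag as the one genuine subtlety.
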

\begin{proof}
Let $ \M_1'$ and $\M_2'$ be the closure of the sub-$\range$-$\range$ bimodules generated by $\lambda_1(\domain)\xi_1$ and $\lambda_2(\domain)\xi_2$, respectively. 
Since $\E[a]=\phi_{\xi_1}(\gamma_1(a))=\phi_{\xi_2}(\gamma_2(a))$
for all $a\in \domain,$  the map $S_0:\lambda_1(\domain)\xi_1\rightarrow\lambda_2(\domain)\xi_2$ defined by $S_0(\lambda_1(a)\xi_1)=\lambda_2(a)\xi_2$ is an isomorphism such that $S_0\xi_1=\xi_2$.
 Let $S$ be the extension of $S_0$ to $\M_1$ such that the restriction  $S|_{{\M_1'}^{\perp}}=0$. Then $S$ is the $\range$-morphism we want.
\end{proof}

Let $(\M'_i,\xi'_i)_{i\in I}$ be a family of $\range$-$\range$ bimodules with specified vectors such that for each $i$,   there exits a $\range$-linear homomorphism $\gamma'_i:\domain_i\rightarrow L(\M'_i)$ and $\E_i[a]=\phi_{\xi_i}(a)$ for all $a\in\domain_i.$
For each $i\in \I$, let $S_i$ be  a $\range$-morphism from $(\M_i,\xi_i)$ to $(\M'_i,\xi'_i)$ such that 
$$S\lambda_1(a)=\lambda_2(a)S,$$ 
for all $a\in\domain$.  
Let $(\M,\xi)$ and $(\M',\xi')$ be the reduced free product of $(\M_i,\xi_i)_{i\in I}$ and $(\M'_i,\xi'_i)_{i\in I}$, respectively. 
Then, we have a $\range$-morphism $S:(\M,\xi)\rightarrow (\M',\xi')$ defined as 
$$ S(b\xi)=b\xi', \quad \forall b\in\range$$
and $$S(m_1\otimesb\cdots\otimesb m_n)=S_{i_1}(m_1)\otimesb\cdots\otimes  S_{i_n}(m_n), $$
for all $n$, where $m_k\in\MC_{i_k}$ for $k=1,\cdots,n$. 
Let $\lambda'_i$ be the embedding isomorphism from $L(M_i)$. 
It follows the proof of Lemma 1.13 in \cite{Voi1}  that
$$ S\lambda_i(\gamma_i(a))= \lambda'_i(\gamma'_i(a))S.$$
One can easily check that $SP^{\M}_{\ii}=P^{\M'}_\ii S$ for all $\ii\in\K(\I)$. It follows that $SP^{\M}_{\J_i}=P^{\M'}_{\J_i}S$ for all $i\in \I$.  Therefore, we have that 
\begin{align*}
&SP^\M_{\J_{i_1}}\lambda_{i_1}(\gamma_{i_1}(a_1))\cdots P^\M_{\J_{i_m}}\lambda_{i_m}(\gamma_{i_m}(a_m))\xi\\
=&P^{\M'}_{\J_{i_1}}\lambda'_{i_1}(\gamma'_{i_1}(a_1))\cdots P^{\M'}_{\J_{i_m}}\lambda'_{i_m}(\gamma'_{i_m}(a_m))\xi'.
\end{align*}

According to the definition of $S$, we have that 
$$
\phi_{\xi}[P^\M_{\J_{i_1}}\lambda_{i_1}(\gamma_{i_1}(a_1))\cdots P^\M_{\J_{i_m}}\lambda_{i_m}(\gamma_{i_m}(a_m))]=\phi_{\xi'}[P^{\M'}_{\J_{i_1}}\lambda'_{i_1}(\gamma'_{i_1}(a_1))\cdots P^{\M'}_{\J_{i_m}}\lambda'_{i_m}(\gamma'_{i_m}(a_m))],
$$
whenever $a_k\in \domain_{i_k}$ for $1\leq k\leq m$.
Therefore,  $\{P^\M_{\J_{i}}\lambda_{i}(\gamma_{i}(x_i))|i\in \I \}$ and  $\{P^{\M'}_{\J'_{i}}\lambda'_{i}(\gamma'_{i}(x_i))|i\in \I \}$ have the same joint distribution.
It follows that the distribution of $X'=\sum\limits_{i\in\I} P^{\M'}_{\J_i}\lambda'_i(\gamma'_i(x_i))$ is also $\mu$. We have the following well-defined definition. 
\begin{definition}\label{definition of J-additive convolution}\normalfont
Given a finite index set $\I$, let $\{\mu_i\}_{i\in I}$ be a family of $\range$-valued distribution of $k$-tuples of random variables and $\J=\{\J_i\subset \K(\I)|i\in \I\}$ is $\I$-compatible.   
For each $i$, let $(\M_i,\xi_i)$ be a $\range$-$\range$ bimodule with a specified vector and $x(i,1),\cdots,x(i,k)\in\M_i$ such that $\mu_i$ is equal to the distribution of $x(i,1),\cdots,x(i,k)$.  
Let $(\M,\xi)$ be the reduced free product of $(\M_i,\xi)$ and $\lambda_i:L(\M_i)\rightarrow L(M)$ are the embedding isomorphisms.  
Then, the $\J$-additive convolution of $\{\mu_i\}_{i\in I}$ is the distribution of $(X_1,\cdots,X_k)$, where
$$X_l=\sum\limits_{i\in\I} P^{\M}_{\J_i}\lambda_i(x(i,l)),$$
for $1\leq l\leq k$.
\end{definition}

\begin{example}\normalfont Let $\I=\{1,2\}$ and $\J=\{\J_1,\J_2\}$ such that $\J_1=\J_2=\K(\I)$. 
Then the $\J$-additive convolution is the free additive  convolution, namely the $\J$-additive convolution of $\mu_1$ and $\mu_2$ is $\mu_1\bp\mu_2$. 
\end{example}

\begin{example}\normalfont Let $\I=\{1,2\}$ and $\J=\{\J_1,\J_2\}$ such that $\J_1=\{\emptyset,(1)\}$ and $\J_2=\{\emptyset,(2)\}$. Then the $\J$-additive convolution is the  Boolean  additive convolution, namely the $\J$-additive convolution of $\mu_1$ and $\mu_2$ is $\mu_1\uplus\mu_2$.
\end{example}

\begin{example}\normalfont Let $\I=\{1,2\}$ and $\J=\{\J_1,\J_2\}$ such that $\J_1=\{\emptyset,(1)\}$ and $\J_2=\{\emptyset,(1), (2),(2,1)\}$. Then the $\J$-additive convolution is the  monotone additive  convolution, namely the $\J$-additive convolution of $\mu_1$ and $\mu_2$ is $\mu_1\rhd\mu_2$.
\end{example}

In this paper, we will study  the following two $\J$-additive convolutions later.
\begin{definition}\normalfont\label{definition of orthogonal convolution}
  Let $\I=\{1,2\}$ and $\J=\{\J_1,\J_2\}$ such that $\J_1=\{\emptyset,(1)\}$ and $\J_2=\{(1), (2,1)\}$. Then the $\J$-additive convolution is called  orthogonal additive   convolution. The orthogonal additive convolution of $\mu_1$ and $\mu_2$ is denoted by $\mu_1\vdash\mu_2$.
\end{definition}

\begin{definition}\normalfont\label{definition of s-free convolution}
 Let $\I=\{1,2\}$ and $\J=\{\J_1,\J_2\}$ such that $\J_1=\{(i_1,\cdots,i_m)\in\K(\I))|i_m=1\}\cup\{\emptyset\}$ and $\J_2=\{(i_1,\cdots,i_m)\in\K(\I))|i_m=1\}$. Then $\J=\{\J_1,\J_2\}$ is $\I$-compatible and  the $\J$-additive convolution is called  s-free additive  convolution. The s-free additive convolution of $\mu_1$ and $\mu_2$ is denoted by $\mu_1\br\mu_2$.
\end{definition}

\begin{remark}\normalfont
The orthogonal additive convolution and the s-free additive convolution are neither commutative nor associative. 
\end{remark}

\begin{definition}\normalfont
Let $(x_1,\cdots,x_k)$ be a $k$-tuple of random variables in a $\range$-valued probability space $(\domain,\E)$. Then the matricial distribution of $(x_1,\cdots,x_k)$ is the $\range\otimes \mnc$-valued distribution $\mu_X$ of $X=\sum\limits_{l=1}^k x_l\otimes e(l,l)\in \domain \otimes \mnc$ in the $\range\otimes \mnc$-valued probability $(\domain\otimes\mnc,\E\otimes I_n)$.
\end{definition}

According the proof of  Proposition \ref{matricial extension}, we have the following property.
\begin{proposition}\label{Matricial convolution}\normalfont
Let   $\{\mu_i\}_{i\in I}$ be a family of $\range$-valued distributions of $k$-tuples of random variables  $(x(i,1),\cdots,x(i,k))\in(L(\M_i),\phi_{\xi_i})$  for each $i$ and $\mu$ is a $\range$-valued distribution of a $k$-tuple of random variables  $(x_1,\cdots,x_k)\in(L(\M),\phi_{\xi})$. 
Then the $\J$-additive convolution of $\{\mu_i\}_{i\in I}$ is equal to $\mu$ if and only if  the $\J$-additive convolution of the matricial distributions of $(x(i,1),\cdots,x(i,k))\in(L(\M_i),\phi_{\xi_i})$ is equal to the matricial distribution of $(x_1,\cdots,x_k)$.
\end{proposition}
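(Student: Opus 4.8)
The plan is to reduce the asserted equivalence to two ingredients: the already-established bijective correspondence between the joint distribution of a $k$-tuple $(x_1,\dots,x_k)$ and the matricial distribution of $X=\sum_{l=1}^k x_l\otimes e(l,l)$, and the fact that forming the matricial encoding commutes with taking the $\J$-additive convolution. The latter is where essentially all the content lies, and it is exactly what the proof of Proposition~\ref{matricial extension} was set up to deliver.

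First I would recall from the discussion preceding Definition~\ref{definition of J-additive convolution} that the assignment sending the joint distribution of $(x_1,\dots,x_k)$ to the matricial distribution of $X=\sum_l x_l\otimes e(l,l)$ is injective: the joint distribution visibly determines $\mu_X$ by the bimodule identity $\E_n[\cdots]=\E[\cdots]\otimes(e(k_1,l_1)\cdots)$, and conversely every mixed moment $\E[x_{i_1}b_1\cdots b_{m-1}x_{i_m}]$ is recovered from $\mu_X$ by compressing $X$ with the matrix units $1_\domain\otimes e(1,i)$ and $1_\domain\otimes e(i,1)$. Hence two $k$-tuples have equal joint distributions if and only if they have equal matricial distributions.

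The heart of the argument is the computation that the matricial distribution of the $\J$-additive convolution equals the $\J$-additive convolution of the matricial distributions. Writing the $\J$-additive convolution of $\{\mu_i\}_{i\in I}$ as the distribution of $(X_1,\dots,X_k)$ with $X_l=\sum_{i\in\I}P^{\M}_{\J_i}\lambda_i(x(i,l))$ (Definition~\ref{definition of J-additive convolution}), its matricial encoding is $\sum_{l=1}^k X_l\otimes e(l,l)$. On the other side, the matricial distribution of $(x(i,1),\dots,x(i,k))$ is the distribution of $X(i)=\sum_l x(i,l)\otimes e(l,l)$, and Proposition~\ref{matricial extension} guarantees that these are $\J$-independent over $\range\otimes\mnc$ in the reduced free product $(\M\otimes\mnc,\xi\otimes I_n)$, so their $\J$-additive convolution is the distribution of $\sum_{i\in\I}P^{\M\otimes\mnc}_{\J_i}\lambda_{i,n}(X(i))$. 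Using the two identities extracted from the proof of Proposition~\ref{matricial extension}, namely $P^{\M\otimes\mnc}_{\ii}=P^{\M}_{\ii}\otimes 1_{\mnc}$ and $\lambda_{i,n}(a\otimes e(k,l))=\lambda_i(a)\otimes e(k,l)$, I would expand
\[
\sum_{i\in\I}P^{\M\otimes\mnc}_{\J_i}\lambda_{i,n}(X(i))
=\sum_{l=1}^k\Big(\sum_{i\in\I}P^{\M}_{\J_i}\lambda_i(x(i,l))\Big)\otimes e(l,l)
=\sum_{l=1}^k X_l\otimes e(l,l),
\]
which is precisely the matricial encoding of $(X_1,\dots,X_k)$. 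Thus the two operations agree.

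Combining the two steps finishes the proof: writing $\nu$ for the $\J$-additive convolution of $\{\mu_i\}_{i\in I}$, the computation shows that the matricial distribution of $\nu$ equals the $\J$-additive convolution of the matricial distributions of the $(x(i,1),\dots,x(i,k))$. By the injectivity of the matricial encoding from the first step, $\nu=\mu$ holds if and only if the matricial distribution of $\nu$ equals that of $\mu$, which is exactly the asserted equivalence. I expect the only delicate point to be the bookkeeping that intertwines $\lambda_{i,n}$ with the matrix units and the verification that $P^{\M\otimes\mnc}_{\J_i}=P^{\M}_{\J_i}\otimes 1_{\mnc}$ survives summation over $\ii\in\J_i$; since $P^{\M\otimes\mnc}_{\J_i}=\sum_{\ii\in\J_i}P^{\M\otimes\mnc}_{\ii}=\big(\sum_{\ii\in\J_i}P^{\M}_{\ii}\big)\otimes 1_{\mnc}$, this is immediate from the proof of Proposition~\ref{matricial extension}, so no genuinely new obstacle arises beyond that verification.
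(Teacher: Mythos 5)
Your proposal is correct and follows essentially the same route the paper intends: the paper gives no written proof beyond the remark that the result follows from the proof of Proposition \ref{matricial extension}, and your argument is precisely the elaboration of that remark, combining the identities $P^{\M\otimes\mnc}_{\ii}=P^{\M}_{\ii}\otimes 1_{\mnc}$ and the compatibility of $\lambda_{i,n}$ with matrix units from that proof with the encoding/decoding discussion of matricial distributions in Section 3.1. The only point worth making explicit is that the $\J$-additive convolution of the matricial distributions may be computed in the specific model $(\M_i\otimes\mnc,\xi_i\otimes I_n)$ because Definition \ref{definition of J-additive convolution} was shown to be independent of the choice of representing bimodules.
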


\section{On the Boolean additive convolution and operator-valued Boolean convolution powers}
In this section, we study some analytic properties of the $\range$-valued Boolean additive convolution and the $\range$-valued Boolean convolution powers. 
The combinatorial aspects of  $\range$-valued Boolean convolution are studied in \cite{Po1}. 
\subsection{Boolean additive convolution}
\begin{lemma}\label{boolean product 1}\normalfont  Let $x,y$ be Boolean independent selfadjoint random variables from $(\domain, \E)$ and let$alg\{\range,x,y\}$ be  is the algebra generated by $\range,x,y$. Then 
$$\E[axf(y)]=\E[ax]\E[f(y)],$$
for all $a\in alg\{\range,x,y\}$ and $f\in \range\langle X \rangle.$
\end{lemma}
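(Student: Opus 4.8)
The plan is to reduce the identity to the defining factorization property of Boolean independence by expanding $a$ and $f(y)$ into alternating words in the two generating subalgebras. Write $\domain_1$ and $\domain_2$ for the non-unital $\range$-$\range$-bimodule subalgebras generated by $x$ and $y$, so that $x\in\domain_1$ and $\domain_2=\range\langle y\rangle_0$. The structural fact I will use is that, because each $\domain_i$ is simultaneously an $\range$-bimodule and an algebra, every monomial in $\range,x,y$, and hence by linearity every element of $alg\{\range,x,y\}$, can be written as an $\range$-constant plus a linear combination of alternating products $w_1w_2\cdots w_r$ with $w_j\in\domain_{i_j}$ and $i_1\neq i_2\neq\cdots\neq i_r$; one simply groups maximal runs of $x$'s and of $y$'s, absorbing the intervening scalars from $\range$ into an adjacent factor.

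First I would dispose of the constant part of $f$. Writing $f(y)=c+y_0$ with $c\in\range$ and $y_0\in\domain_2$, the identity for the summand $c$ is immediate from the bimodule property of $\E$, since $\E[axc]=\E[ax]c=\E[ax]\E[c]$. Hence it suffices to treat $f(y)=y_0\in\domain_2$, and by linearity in $a$ it suffices to take $a$ to be either an $\range$-constant or a single alternating word.

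The heart of the argument is the observation that $ax$ is always an alternating word ending in $\domain_1$. Indeed, if $a=b\in\range$ then $ax=bx\in\domain_1$; if $a=w_1\cdots w_r$ with last factor in $\domain_1$, then $w_rx\in\domain_1$ and $ax=w_1\cdots w_{r-1}(w_rx)$ remains alternating and ends in $\domain_1$; and if the last factor of $a$ lies in $\domain_2$, then appending $x\in\domain_1$ lengthens the alternating word by one, again ending in $\domain_1$. Consequently $axy_0$ is itself an alternating word, since its final two factors carry indices $1$ and $2$. Applying the Boolean factorization $\E[a_1\cdots a_n]=\E[a_1]\cdots\E[a_n]$ to both $ax$ and $axy_0$, the expectation of $axy_0$ equals the product of the expectations of the factors of $ax$ times $\E[y_0]$; comparing with the analogous factorization of $\E[ax]$ yields $\E[axy_0]=\E[ax]\E[y_0]$, which is the claim.

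I do not expect a serious obstacle here: selfadjointness of $x,y$ is not actually needed, and the only point requiring care is the bookkeeping guaranteeing that $ax$, and then $axy_0$, really are alternating products with no constant term in any factor, so that the Boolean independence hypothesis applies verbatim.
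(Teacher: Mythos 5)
Your argument is correct and follows essentially the same route as the paper's proof: reduce to the case where $f$ has no constant term (handling the constant via the bimodule property of $\E$), observe that $ax$ is a linear combination of alternating words whose last block lies in the subalgebra generated by $x$, and then apply the Boolean factorization. The paper states the alternating-block decomposition more tersely ("$ax$ is a linear combination of monomials ending with $x$"), whereas you spell out the bookkeeping, but the substance is identical.
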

\begin{proof}
Notice that $ax$ is a linear combination of mononials ends with $x$, thus 
$$\E[axf_0(y)]=\E[ax]\E[f_0(y)],$$
for all $f_0\in \range_0\langle X \rangle. $

Suppose that $b\in \range$ is the constant term of $f$, namely $f-b\in \range_0\langle X \rangle $. Then we have 
$$ \E[ax(f(y)-b)+axb]=\E[ax]\E[f(y)-b]+\E[ax]b=\E[ax]\E[f(y)].$$
The proof is done.
\end{proof}

\begin{lemma}\label{boolean product 2}\normalfont
 Let $x,y$ be Boolean independent selfadjoint random variables from $(\domain, \E)$. Let  $b\in \range $ be such that $x+y-b$ and $y-b$ are invertible. Then
 $$\E[(b-x-y)^{-1}]=(1+\E[(b-x-y)^{-1}x])\E[(y-b)^{-1}].$$
\end{lemma}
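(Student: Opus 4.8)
The plan is to reduce the claimed identity to a purely algebraic resolvent identity in $\domain$ (no independence involved), then to apply $\E$ and invoke the Boolean factorization of Lemma \ref{boolean product 1}.

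First I would establish the operator identity
$$(b-x-y)^{-1}=\bigl(1+(b-x-y)^{-1}x\bigr)(b-y)^{-1},$$
valid whenever $b-x-y$ and $b-y$ are invertible. This is immediate from the computation $(b-x-y)\bigl(1+(b-x-y)^{-1}x\bigr)=(b-x-y)+x=b-y$: left-multiplying by $(b-x-y)^{-1}$ and right-multiplying by $(b-y)^{-1}$ gives the displayed identity. This is just the resolvent expansion associated with the splitting $b-x-y=(b-y)-x$, and it uses nothing about $x$ and $y$ beyond invertibility.

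Next I would apply $\E$ and split the right-hand side by linearity as
$$\E[(b-x-y)^{-1}]=\E[(b-y)^{-1}]+\E\bigl[(b-x-y)^{-1}\,x\,(b-y)^{-1}\bigr].$$
The aim is to factor the second summand as $\E[(b-x-y)^{-1}x]\,\E[(b-y)^{-1}]$, which is exactly the conclusion of Lemma \ref{boolean product 1} with $a=(b-x-y)^{-1}$ and $f(y)=(b-y)^{-1}$. Combining the two terms then yields $\bigl(1+\E[(b-x-y)^{-1}x]\bigr)\E[(b-y)^{-1}]$, i.e.\ the desired relation, with the right-hand factor $\E[(b-y)^{-1}]$ being the $\range$-Cauchy transform of $y$ (here one should simply keep track of the elementary sign $(y-b)^{-1}=-(b-y)^{-1}$).

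The main obstacle is that Lemma \ref{boolean product 1} is stated for $a\in alg\{\range,x,y\}$ and $f\in\range\langle X\rangle$, whereas in our application $a$ and $f(y)$ are resolvents rather than $\range$-polynomials. To bridge this I would use a norm-approximation argument: for $b$ with sufficiently large ``imaginary part'' the Neumann series $(b-y)^{-1}=\sum_{n\geq 0}(b^{-1}y)^{n}b^{-1}$ and the analogous expansion of $(b-x-y)^{-1}$ converge in norm, exhibiting these resolvents as norm-limits of elements of $\range\langle X\rangle$ and of $alg\{\range,x,y\}$, respectively. Since $\E$ is a completely positive contraction of $C^*$-algebras, hence norm-continuous, the factorization of Lemma \ref{boolean product 1} survives passage to the limit, giving $\E[a\,x\,f(y)]=\E[ax]\,\E[f(y)]$ for the resolvents on this sub-domain of $b$. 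Finally, both sides of the target identity are analytic $\range$-valued functions of $b$ on the connected open set where $b-x-y$ and $b-y$ are invertible, so agreement on the region of large imaginary part propagates to the whole domain by analytic continuation, completing the proof.
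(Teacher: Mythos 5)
Your argument is, at its core, the same as the paper's: the identical resolvent identity $(b-x-y)^{-1}=(b-y)^{-1}+(b-x-y)^{-1}x(b-y)^{-1}$, followed by Lemma \ref{boolean product 1} applied with $a=(b-x-y)^{-1}$ and $f(y)=(b-y)^{-1}$, and linearity of $\E$. (Your sign bookkeeping is also right: the paper's proof, and the way the lemma is used afterwards, give the identity with $\E[(b-y)^{-1}]$; the $(y-b)^{-1}$ in the printed statement is a typo.)

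The one place you genuinely diverge is the passage from polynomials to resolvents, and there your argument has a gap. You establish the factorization only for $b$ in the region where the Neumann series converge, and then invoke analytic continuation over ``the connected open set where $b-x-y$ and $b-y$ are invertible.'' That set need not be connected: the lemma allows an arbitrary $b\in\range$ making both elements invertible, and already for $\range=C(S^1)$, $x=y=0$, the relevant set is the group of invertibles of $C(S^1)$, which has infinitely many components (winding number); so continuation from the large-imaginary-part region cannot reach every admissible $b$. The paper's route avoids this entirely: for any admissible $b$, spectral permanence for unital $C^*$-subalgebras puts $(b-y)^{-1}$ in the norm closure of $alg\{\range,y\}$ and $(b-x-y)^{-1}$ in the norm closure of $alg\{\range,x,y\}$, and since both sides of the factorization in Lemma \ref{boolean product 1} are norm-continuous in $a$ and $f(y)$ (as $\E$ is bounded), the identity extends by density to all such $b$ at once, with no restriction on $b$ and no continuation step. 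Replacing your last paragraph with this density argument closes the gap and recovers the paper's proof.
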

\begin{proof}
Notice that $(b-x-y)^{-1} $ is contained in the norm closure of $alg\{\range,x,y\}$ and $(b-y)^{-1}$ in the norm closure of $alg\{\range,y\}$. By the the norm continuity of $\E$ and Lemma \ref{boolean product 1}, we have 
$$\E[(b-x-y)^{-1}x(b-y)^{-1}]=\E[(b-x-y)^{-1}x]\E[(b-y)^{-1}]. $$

On the other hand, we have   $$(b-x-y)^{-1}=(b-y)^{-1}+(b-x-y)^{-1}x(b-y)^{-1}.$$

It follows that 
\begin{align*}
\E[(b-x-y)^{-1}]&=\E[(b-y)^{-1}+(b-x-y)^{-1}x(b-y)^{-1}]\\
&=(1+\E[(b-x-y)^{-1}x])\E[(b-y)^{-1}].\\
\end{align*}
The proof is done.
\end{proof}

\begin{lemma}\normalfont
Let $\mu,\nu\in\cbd$. Then 
$$F_{\mu\uplus\nu,1}(b)= F_{\mu,1}(b)+F_{\nu,1}(b)-b,$$
for $b\in\mathbb{H}^+.$
\end{lemma}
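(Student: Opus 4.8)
The goal is to establish the operator-valued Boolean additive formula $F_{\mu\uplus\nu,1}(b)=F_{\mu,1}(b)+F_{\nu,1}(b)-b$, and the natural strategy is to realize $\mu\uplus\nu$ concretely as the distribution of $x+y$ for Boolean independent selfadjoint $x,y$ and then compute the reciprocal Cauchy transform via the resolvent identity supplied by Lemma \ref{boolean product 2}. First I would pick Boolean independent selfadjoint random variables $x,y$ in some $(\domain,\E)$ with $\mu=\mu_x$ and $\nu=\mu_y$, so that by definition $\mu\uplus\nu=\mu_{x+y}$, and hence $G_{\mu\uplus\nu,1}(b)=\E[(b-x-y)^{-1}]$ for $b\in\mathbb{H}^+$.

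The computational core is to massage Lemma \ref{boolean product 2}, which gives
$$\E[(b-x-y)^{-1}]=(1+\E[(b-x-y)^{-1}x])\,\E[(y-b)^{-1}],$$
into a statement purely about the three reciprocal Cauchy transforms. By symmetry (swapping the roles of $x$ and $y$, which is legitimate since Boolean independence is symmetric), I would also record
$$\E[(b-x-y)^{-1}]=(1+\E[(b-x-y)^{-1}y])\,\E[(x-b)^{-1}].$$
The key observation is that $\E[(b-x-y)^{-1}x]+\E[(b-x-y)^{-1}y]=\E[(b-x-y)^{-1}(x+y)]$, and since $(b-x-y)^{-1}(x+y)=(b-x-y)^{-1}(b-(b-x-y))=b\,(b-x-y)^{-1}-1$, applying $\E$ yields
$$\E[(b-x-y)^{-1}x]+\E[(b-x-y)^{-1}y]=G_{\mu\uplus\nu,1}(b)\,b-1.$$
Writing $G:=G_{\mu\uplus\nu,1}(b)$, $G_\mu:=G_{\mu,1}(b)=\E[(b-x)^{-1}]=-\E[(x-b)^{-1}]$ and $G_\nu:=G_{\nu,1}(b)$, the two resolvent identities become $G=(1+\E[(b-x-y)^{-1}x])(-G_\nu)$ and $G=(1+\E[(b-x-y)^{-1}y])(-G_\mu)$. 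I would solve each for the correction term, namely $\E[(b-x-y)^{-1}x]=-G\,G_\nu^{-1}-1=-G\,F_{\nu,1}(b)-1$ and similarly $\E[(b-x-y)^{-1}y]=-G\,F_{\mu,1}(b)-1$, then add them and equate with $Gb-1$. This gives $-G\,F_{\nu,1}(b)-G\,F_{\mu,1}(b)-2=Gb-1$, i.e. $-G(F_{\mu,1}(b)+F_{\nu,1}(b))=Gb+1$. Left-multiplying by $G^{-1}=F_{\mu\uplus\nu,1}(b)$ and rearranging should produce exactly $F_{\mu\uplus\nu,1}(b)=F_{\mu,1}(b)+F_{\nu,1}(b)-b$.

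The main obstacle is noncommutativity: in the operator-valued setting the scalars $G$, $G_\mu$, $G_\nu$ do not commute, so I must be scrupulous about whether each transform sits on the left or the right of the correction terms and must only multiply through by inverses on the correct side. In particular Lemma \ref{boolean product 2} is stated with $\E[(b-x-y)^{-1}x]$ on the left of $\E[(b-y)^{-1}]$, and the factorization $G=G_\mu^{-1}+\dots$ must respect that ordering; I would double-check that $F_{\nu,1}(b)=G_{\nu,1}(b)^{-1}$ is being multiplied on the side dictated by the identity, and that the symmetric version is set up consistently. A secondary point to verify is that $b$, $b-x$, $b-y$, and $b-x-y$ are all invertible for $b\in\mathbb{H}^+$ so that every transform above is defined, but this is standard for the reciprocal Cauchy transform on the noncommutative upper half-plane and poses no real difficulty.
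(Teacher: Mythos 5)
Your strategy is exactly the paper's: realize $\mu\uplus\nu$ as the distribution of $x+y$ for Boolean independent selfadjoint $x,y$, invoke Lemma \ref{boolean product 2} together with its $x\leftrightarrow y$ symmetric version, and combine them with $\E[(b-x-y)^{-1}(x+y)]=\E[(b-x-y)^{-1}]b-1$. However, as written your final algebra does not produce the stated formula, because you have imported a sign error from the \emph{statement} of Lemma \ref{boolean product 2}. That statement reads $\E[(b-x-y)^{-1}]=(1+\E[(b-x-y)^{-1}x])\E[(y-b)^{-1}]$, but its proof (via the resolvent identity $(b-x-y)^{-1}=(b-y)^{-1}+(b-x-y)^{-1}x(b-y)^{-1}$) actually establishes
$$\E[(b-x-y)^{-1}]=\bigl(1+\E[(b-x-y)^{-1}x]\bigr)\,\E[(b-y)^{-1}],$$
i.e.\ with $\E[(b-y)^{-1}]=G_{\nu,1}(b)$ rather than $-G_{\nu,1}(b)$. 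With your signs you get $\E[(b-x-y)^{-1}x]=-G F_{\nu,1}(b)-1$ and $\E[(b-x-y)^{-1}y]=-G F_{\mu,1}(b)-1$ (writing $G=G_{\mu\uplus\nu,1}(b)$); adding these and equating with $Gb-1$ yields $-G\bigl(F_{\mu,1}(b)+F_{\nu,1}(b)+b\bigr)=1$, hence $F_{\mu\uplus\nu,1}(b)=-F_{\mu,1}(b)-F_{\nu,1}(b)-b$, which is not the claimed identity, so the asserted final rearrangement fails. The repair is immediate: using the corrected identity gives $\E[(b-x-y)^{-1}x]=G F_{\nu,1}(b)-1$ and $\E[(b-x-y)^{-1}y]=G F_{\mu,1}(b)-1$ (the right multiplication by $G_{\nu,1}(b)^{-1}=F_{\nu,1}(b)$ is on the correct side, as you anticipated), and summing yields $G\bigl(F_{\mu,1}(b)+F_{\nu,1}(b)-b\bigr)=1$, from which left multiplication by $G^{-1}=F_{\mu\uplus\nu,1}(b)$ gives the lemma; this is precisely the paper's computation. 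One further minor slip: $(b-x-y)^{-1}\bigl(b-(b-x-y)\bigr)$ equals $(b-x-y)^{-1}b-1$, not $b(b-x-y)^{-1}-1$; the expectation $Gb-1$ you then record is nevertheless the correct one, by the bimodule property $\E[(b-x-y)^{-1}b]=\E[(b-x-y)^{-1}]b$.
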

\begin{proof}
Since $\mu,\nu\in\cbd$, there exist Boolean independent selfadjoint random variables $x,y$ from a $\range$-valued probability space $(\domain, \E)$ such that $\mu$ equals the distribution of $x$ and $\nu$ equals the distribution of $y$. 
By Lemma \ref{boolean product 2}, we have 
\begin{align*}
G_{\mu\uplus \nu,1}(b)F_{\nu,1}(b)&=\E[(b-x-y)^{-1}]\E[(b-y)^{-1}]^{-1}\\
&=(1+\E[(b-x-y)^{-1}x])\E[(b-y)^{-1}]\E[(b-y)^{-1}]^{-1}\\
&=1+\E[(b-x-y)^{-1}x].
\end{align*}
Similarly, we have $G_{\mu\uplus \nu,1}(b)F_{\mu,1}(b)= 1-\E[(b-x-y)^{-1}y]$.  Therefore,  we have 
\begin{align*}
&G_{\mu\uplus\nu,1}(b)(F_\mu(b)+F_{\nu,1}(b)-b)\\=&2+\E[(b-x-y)^{-1}x]+\E[(b-x-y)^{-1}y]-\E[(b-x-y)^{-1}b]\\
=&1.
\end{align*}
Since  $F_{\mu\uplus\nu,1}(b)=G_{\mu\uplus\nu,1}(b)^{-1}$, the statement follows.
\end{proof}

Notice that the Boolean additive convolution is a kind of $\J$additive convolution, the $\range\otimes\mnc$-valued distribution of $x\otimes I_n+y\otimes I_n$ is then the Boolean additive convolution of the distributions of $x\otimes I_n$ and $y\otimes I_n$.  Therefore, we have the following result.

\begin{proposition}\label{reciprocal Cauchy Boolean additive convolution} \normalfont
Let $\mu,\nu\in\cbd$. Then,  for all $n\in\N$, we have
$$F_{\mu\uplus\nu,n}(b_n)= F_{\mu,n}(b_n)+F_{\nu,n}(b_n)-b_n,$$
for $b_n\in\mathbb{H}^+_n.$ 
\end{proposition}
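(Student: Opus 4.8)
The plan is to reduce the level-$n$ statement to the level-$1$ statement already proved in the preceding Lemma, by transporting everything into the $\range\otimes\mnc$-valued world via the matricial extension of Boolean independence.

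First I would realize the data concretely: since $\mu,\nu\in\cbd$, pick Boolean independent selfadjoint random variables $x,y$ in a $C^*$-operator valued probability space $(\domain,\E)$ with $\mu=\mu_x$ and $\nu=\mu_y$, and let $\domain_1,\domain_2$ be the $\range$-subalgebras they generate. The crucial bookkeeping observation is that the matricial reciprocal Cauchy transform at level $n$ is literally a level-$1$ reciprocal Cauchy transform over the $C^*$-algebra $\range\otimes\mnc$. Indeed, from the definitions $G_{\mu,n}(b_n)=\E_n[(b_n-x\otimes I_n)^{-1}]$ is exactly the $\range\otimes\mnc$-Cauchy transform of $x\otimes I_n\in(\domain\otimes\mnc,\E_n)$, and $\mathbb{H}^+_n$ is precisely the set $\mathbb{H}^+$ attached to the algebra $\range\otimes\mnc=M_n(\range)$; so $F_{\mu,n}$ and $F_{\nu,n}$ are the level-$1$ reciprocal Cauchy transforms of $x\otimes I_n$ and $y\otimes I_n$ over $\range\otimes\mnc$.

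Next I would install the independence hypothesis at level $n$. By Proposition \ref{matricial extension}, Boolean independence of $\domain_1,\domain_2$ over $\range$ upgrades to Boolean independence of $\domain_1\otimes\mnc$ and $\domain_2\otimes\mnc$ over $\range\otimes\mnc$ in $(\domain\otimes\mnc,\E_n)$. Since $x\otimes I_n$ commutes with the scalar matrix units and $x\in\domain_1$, the $\range\otimes\mnc$-subalgebra generated by $x\otimes I_n$ is contained in $\domain_1\otimes\mnc$, and similarly for $y\otimes I_n$; hence $x\otimes I_n$ and $y\otimes I_n$ are Boolean independent over $\range\otimes\mnc$. Their sum is $(x+y)\otimes I_n$, and because $x+y$ realizes $\mu\uplus\nu$, the $\range\otimes\mnc$-distribution of $(x+y)\otimes I_n$ is exactly the one whose level-$1$ reciprocal Cauchy transform is $F_{\mu\uplus\nu,n}$.

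Finally I would invoke the preceding Lemma with its base algebra $\range$ replaced throughout by $\range\otimes\mnc$: all of its hypotheses now hold, since $(\domain\otimes\mnc,\E_n)$ is a $C^*$-operator valued probability space over $\range\otimes\mnc$ containing the Boolean independent selfadjoint elements $x\otimes I_n,\,y\otimes I_n$, so the entire chain leading to the level-$1$ identity applies verbatim. This gives
$$F_{\mu\uplus\nu,n}(b_n)=F_{\mu,n}(b_n)+F_{\nu,n}(b_n)-b_n,\qquad b_n\in\mathbb{H}^+_n.$$
I expect the only genuine subtlety to be this bookkeeping: confirming that the level-$n$ transforms over $\range$ are honest level-$1$ transforms over $\range\otimes\mnc$, and that the algebra generated by $x\otimes I_n$ lands inside $\domain_1\otimes\mnc$ so that Proposition \ref{matricial extension} applies. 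Once these identifications are secured, no new analysis is required, and the level-$1$ lemma does all the work.
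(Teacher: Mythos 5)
Your proposal is correct and follows essentially the same route as the paper: the paper also observes that, by the matricial extension property of $\J$-independence (Proposition \ref{matricial extension}), the $\range\otimes\mnc$-valued distribution of $x\otimes I_n+y\otimes I_n$ is the Boolean additive convolution of the distributions of $x\otimes I_n$ and $y\otimes I_n$, and then applies the level-$1$ identity over $\range\otimes\mnc$. Your write-up simply makes explicit the bookkeeping (identifying $F_{\mu,n}$ with the level-$1$ transform over $\range\otimes\mnc$) that the paper leaves implicit.
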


\subsection{Operator valued Boolean convolution Powers}
In this subsection, we study the reciprocal Cauchy transforms for  $\range$-valued Boolean convolution Powers. 
\begin{definition}\label{partition}\normalfont
Let $S$ be an ordered set. A partition $\sigma$ of a set $S$ is a collection of disjoint nonempty sets $V_1,...,V_r$ whose union is $S$.  $V_1,...,V_r$ are called  blocks of $\sigma$. A block $V$ of $\sigma$ is an interval if there is no triple $(s_1,s_2,r)$ such that $s_1<r<s_2$, $s_1,s_2\in V$, $r\not\in V$.  A partition $\sigma\in P(S)$ is an  interval partition if   every block of $\sigma$ is an interval.  The family of interval  partitions  of  $S$ will be denoted by $IN(S)$.   Let $l<n$ be two natural numbers, we denote by $[l,n]$ the interval $\{l,l+1,\cdots,k\}$ and denote by $[n]$ the interval $[1,n]$.
\end{definition}

\begin{definition}\normalfont
A $\range$-series $(\beta^{[n]})_{n\geq 1}$ is a sequence of $\C$-multilinear maps that, for each $n\geq 1$, $\beta^{[n]}:\range^{n-1}\rightarrow \range$. 
Given an interval partition $\sigma\in IN(n)$, then $\beta^{[\sigma]}$ is a $\C$-multilinear map form $\range^{n-1}$ to $\range$ defined recursively as follows:
 If $\sigma$ contains only one block $[n]$, then $\beta^{[\sigma]}=\beta^{[n]}$. Otherwise,  suppose that $[k+1,n]$ is an interval block of $\sigma$ and let $\sigma'$ be the restriction of  $\sigma$ to the interval $[1,k]$. Then
$$\beta^{[\sigma]}(b_1,\cdots,b_{n-1})=\beta^{[\sigma']}(b_1,\cdots,b_{k-1})b_k\beta^{[n-k]}(b_{k+1},\cdots,b_{n-1}).$$
\end{definition}

\begin{remark}\normalfont Notice that $\beta_1$ is always a  $\range$-constant.  
The partitions in the previous definition can be replaced by noncrossing partitions which are very useful combinatorial tools in studying free independence relation.
\end{remark}

\begin{definition}\normalfont
Given a random variable $x\in (\domain,\E)$, the moment series of  $x$ is a $\range$-series $(M_x^{[n]})_{n\geq 1}$ that $M_x^{[1]}=\E[x]$ and 
$$M_x^{[n]}(b_1,\cdots,b_{n-1})=\E[xb_1x\cdots xb_{n-1}x],$$ 
for $n\geq 2$ and $b_1,\cdots,b_n-1\in\range.$  
The B-transform of $(M_x^{[n]})_{n\geq 1}$ is the series $(B^{[n]}_x)_{n\geq 1}$, such that 
$$ M_x^{[n]}(b_1,\cdots,b_{n-1})=\sum\limits_{\sigma\in IN(n)} B^{[\sigma]}_x(b_1,\cdots,b_{n-1}).$$
\end{definition}

\begin{remark}\normalfont $(B^{[n]}_x)_{n\geq 1}$ can be solved recursively as follows: $B^{[1]}_x=\E[x]$ and 
$$ B_x^{[n]}(b_1,\cdots,b_{n-1})=\E[xb_1x\cdots xb_{n-1}x]-\sum\limits_{\sigma\in IN(n),\sigma\neq \{[n]\}} B^{[\sigma]}_x(b_1,\cdots,b_{n-1}),$$
for $n\geq 2$.
\end{remark}

\begin{definition}\normalfont Given $\mu\in \cbd$ such that  $\mu$ is equal to the distribution of $x$ in $(\domain,\E)$, then we set 
$$ B^{[\sigma]}_\mu=B^{[\sigma]}_x,$$ for all $\sigma\in IN(n)$, $n\in\N$. 
Let $\alpha$ be a completely positive map. Then  $\mu^{\uplus \alpha}$ is a  distribution such that 
$$B_{\mu^{\uplus \alpha}}^{[n]}=\alpha(B_{\mu}^{[n]}), $$
for all $n\in \N$.
\end{definition}

The following definition is from Popa \cite{Po1}, and is developed more in \cite{PV}.
\begin{definition}\normalfont Let $x\in (\domain,\E)$ be a selfadjoint random variable. The Boolean cumulants of $x$ are a family of $\range$-multilinear maps $\{B_{n,x}\}_{n\geq 1}$, such that $B_{n,x}:\range^n\rightarrow \range$  and 
$$\E[xb_1\cdots xb_n]=\sum\limits_{k=0}^{n-1}\E[xb_1\cdots xb_{k}]B_{n-k,x}(b_{k+1},\cdots,b_n).$$
\end{definition}

\begin{lemma}\label{equivalent boolean cumulants} \normalfont Given $x\in(\domain,\E)$, we have
$$B_{n,x}(b_1,\cdots,b_n)=B^{[n]}_x(b_1,\cdots,b_{n-1})b_n.$$
\end{lemma}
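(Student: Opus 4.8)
The goal is to prove Lemma \ref{equivalent boolean cumulants}, which asserts the identity
$$B_{n,x}(b_1,\cdots,b_n)=B^{[n]}_x(b_1,\cdots,b_{n-1})b_n$$
relating the Boolean cumulants $\{B_{n,x}\}$ (Popa's recursion on $\E[xb_1\cdots xb_n]$) to the $B$-transform $B^{[n]}_x$ of the moment series (defined via the interval-partition sum $M_x^{[n]}=\sum_{\sigma\in IN(n)}B^{[\sigma]}_x$). The plan is to prove this by induction on $n$, exploiting that both families are characterized by a recursion peeling off the \emph{last} block/factor, so the two recursions should match term by term once the shift in the role of $b_n$ is accounted for.

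The first step I would take is to unwind the defining recursion of the Boolean cumulants. By definition $\E[xb_1\cdots xb_n]=\sum_{k=0}^{n-1}\E[xb_1\cdots xb_k]\,B_{n-k,x}(b_{k+1},\cdots,b_n)$, which I would rearrange to solve for the top cumulant: $B_{n,x}(b_1,\cdots,b_n)=\E[xb_1\cdots xb_n]-\sum_{k=1}^{n-1}\E[xb_1\cdots xb_k]\,B_{n-k,x}(b_{k+1},\cdots,b_n)$. In parallel, I would invoke the recursive formula for the $B$-transform stated in the Remark following its definition, namely $B^{[n]}_x(b_1,\cdots,b_{n-1})=M_x^{[n]}(b_1,\cdots,b_{n-1})-\sum_{\sigma\neq\{[n]\}}B^{[\sigma]}_x(b_1,\cdots,b_{n-1})$. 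Right-multiplying this by $b_n$ and noting $M_x^{[n]}(b_1,\cdots,b_{n-1})b_n=\E[xb_1\cdots xb_{n-1}x]b_n=\E[xb_1\cdots xb_n]$ lines up the leading moment terms on both sides; the work is then to match the correction sums.

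The central step is the combinatorial bookkeeping of the two correction sums under the induction hypothesis. Each interval partition $\sigma\in IN(n)$ with $\sigma\neq\{[n]\}$ has a distinguished last block of the form $[k+1,n]$ for some $1\le k\le n-1$, and by the definition of $B^{[\sigma]}$ it factors as $B^{[\sigma']}_x(b_1,\cdots,b_{k-1})\,b_k\,B^{[n-k]}_x(b_{k+1},\cdots,b_{n-1})$, where $\sigma'$ ranges over all of $IN([1,k])=IN(k)$. Summing over all such $\sigma$ and multiplying by $b_n$, the factor $B^{[n-k]}_x(b_{k+1},\cdots,b_{n-1})b_n$ is exactly $B_{n-k,x}(b_{k+1},\cdots,b_n)$ by the induction hypothesis, while $\sum_{\sigma'\in IN(k)}B^{[\sigma']}_x(b_1,\cdots,b_{k-1})b_k$ equals $M_x^{[k]}(b_1,\cdots,b_{k-1})b_k=\E[xb_1\cdots xb_k]$ by the very definition of the $B$-transform. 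Thus the $\sigma$-sum reorganizes into $\sum_{k=1}^{n-1}\E[xb_1\cdots xb_k]\,B_{n-k,x}(b_{k+1},\cdots,b_n)$, which is precisely the correction sum in the Boolean-cumulant recursion, completing the induction. I would also dispose of the base case $n=1$ separately, where both sides reduce to $\E[x]b_1$ (since $B^{[1]}_x=\E[x]$ and $B_{1,x}(b_1)=\E[xb_1]=\E[x]b_1$ by bimodularity of $\E$).

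I expect the main obstacle to be purely notational rather than conceptual: the $B$-transform recursion slices off the last \emph{interval block} and treats $b_n$ as a trailing multiplier outside the multilinear structure on $\range^{n-1}$, whereas the Boolean-cumulant recursion treats $b_n$ symmetrically as one of $n$ arguments. The care lies in correctly aligning the index $k$ (the split point) in both recursions and confirming that $\sum_{\sigma'\in IN(k)}B^{[\sigma']}_x\,b_k$ collapses to a genuine moment $\E[xb_1\cdots xb_k]$ — this is where I would be most careful to ensure no interval partition is double-counted or omitted, and that the factorization of $B^{[\sigma]}$ respects the associativity needed to pull $b_k$ into the left moment factor.
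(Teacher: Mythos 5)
Your proof is correct and follows essentially the same route as the paper's: induction on $n$, rearranging Popa's recursion for $B_{n,x}$, using the last-interval-block factorization $B^{[\sigma]}_x = B^{[\sigma']}_x(b_1,\dots,b_{k-1})\,b_k\,B^{[n-k]}_x(b_{k+1},\dots,b_{n-1})$ together with $\sum_{\sigma'\in IN(k)}B^{[\sigma']}_x = M^{[k]}_x$ to match the two correction sums. The only difference is cosmetic — you start from $B^{[n]}_x b_n$ and work toward $B_{n,x}$, while the paper runs the same chain of equalities in the opposite direction.
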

\begin{proof} When $n=1$,  we have $B_{1,x}(b_1)=\E[xb_1]=\E[x]b_1=B^{[1]}_xb_1$.\\
When $n\geq 2$, we have 
\begin{align*}
&B_{n,x}(b_1,\cdots,b_n)\\
=&\E[xb_1x\cdots b_{n-1}xb_n]-\sum\limits_{k=1}^{n-1}\E[xb_1\cdots xb_{k}]B_{n-k,x}(b_{k+1},\cdots,b_n)\\
=&\E[xb_1x\cdots b_{n-1}x]b_n-\sum\limits_{k=1}^{n-1}\E[xb_1\cdots b_{k-1}x]b_{k}B^{n-k}_{x}(b_{k+1},\cdots,b_{n-1})b_n\\
=&\left(\E[xb_1x\cdots b_{n-1}x]-\sum\limits_{k=1}^{n-1}\E[xb_1\cdots b_{k-1}x]b_{k}B^{[n-k]}_{x}(b_{k+1},\cdots,b_{n-1})\right)b_n\\
=&\left(\E[xb_1x\cdots b_{n-1}x]-\sum\limits_{k=1}^{n-1}\sum\limits_{\sigma'\in IN(k)}B^{[\sigma']}_x(b_1,\cdots,b_{k-1})b_kB^{[n-k]}_{x}(b_{k+1},\cdots,b_{n-1})\right)b_n\\
=&\left(\E[xb_1x\cdots b_{n-1}x]-\sum\limits_{\sigma \in IN(n), \sigma\neq \{[n]\}}B^{[\sigma]}_x(b_1,\cdots,b_{n-1}))\right)b_n\\
=& B^{[n]}_x(b_1,\cdots,b_{n-1})b_n.
\end{align*}
The proof is done.

\end{proof}

Let $(\M,\xi)$ be a  $\range$-$\range$ bimodule with a specified vector $\xi$, $x$ be a random variable in $(L(\M),\phi_\xi)$.
Let $\M=\range\xi\oplus \MC$ where $\MC=(\range\xi)^{\perp}.$ We write $x$ in the following matrix form
$$x=\left(\begin{array}{cc}
p&a\\
a^*&T
\end{array}\right),$$
where $a:\range\xi\rightarrow \MC$, $T:\MC\rightarrow \MC$, $p:\range\xi\rightarrow \range\xi$.

By Lemma 3.9 of \cite{PV} and Lemma \ref{equivalent boolean cumulants},  we have $B^{[1]}_x=\E[x]$ and
$$B^{[n]}_x(b_1,\cdots,b_{n-1})=B_{n,x}(b_1,\cdots b_{n-1},1_{\range})=\langle a^*b_1Tb_2\cdots Tb_{n-1}a\xi,\xi\rangle,$$
for $n\geq 2$.

 Let $\hh$ be a  $\range$-$\range$ Hilbert bimodule and $\zeta\in\hh$ be such that for $b\in\range$, we have
  $$ \langle \zeta,b\zeta \rangle_{\hh}=\alpha(b),$$ 
  where $ \langle\cdot ,\cdot\rangle_{\hh} $ is the inner product on $\hh$.
 
 Let $\K=\range\xi \oplus \MC\otimesb \hh$ with the inner product
$$\langle b\xi\oplus m\otimesb h ,b'\xi\oplus m'\otimesb h'\rangle_\K=b^*b'+\langle  h , \langle m, m'\rangle_\M h'\rangle_\hh,$$
for $b,b'\in \range$, $m,m'\in \MC$ and $h,h'\in \hh$.

Notice that $b\xi=\xi b$, the following maps are well defined. Let $\tilde{a},\tilde{p},\tilde{T}:\K \rightarrow \K$ be such that 
$$ \tilde{a}(b\xi\oplus m\otimesb h)=a(\xi)\otimesb (\zeta b), $$
$$ \tilde{p}(b\xi\oplus m\otimesb h)=\alpha(p)b\xi,$$
$$ \tilde{T}^*(b\xi\oplus m\otimesb h)=Tm\otimesb h,$$
for all $b\in \range$, $m\in \MC$ and $h\in \hh.$ 
\begin{lemma}\label{selfadjoint}\normalfont $\tilde{a}\in L(\K)$ and 
$$ \tilde{a}^*(b\xi\oplus m\otimesb h)=\langle \zeta,a^*(m)h\rangle_\hh\xi.$$
\end{lemma}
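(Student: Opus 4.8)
The plan is to prove the two assertions simultaneously by \emph{exhibiting} the adjoint: I will verify that the proposed formula for $\tilde{a}^*$ satisfies the defining relation $\langle \tilde{a}v,w\rangle_\K=\langle v,\tilde{a}^*w\rangle_\K$ for all $v,w$ in the algebraic core $\range\xi\oplus(\MC\otimesb\hh)$, and then invoke the standard fact on Hilbert $C^*$-modules that an everywhere-defined operator admitting such a formal adjoint automatically lies in $L(\K)$. This single computation will both establish adjointability (so $\tilde{a}\in L(\K)$) and identify $\tilde{a}^*$ with the stated map.

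First I would check that the candidate map $w=b\xi\oplus m\otimesb h\mapsto \langle\zeta,a^*(m)h\rangle_\hh\xi$ is well defined. The only delicate point is independence of the representative of the balanced tensor $m\otimesb h$: since $a^*$ is adjointable on $\M$ it is right $\range$-linear, so $a^*(mb)h=a^*(m)(bh)$, and the formula therefore respects the relation $mb\otimesb h=m\otimesb bh$. Linearity and the fact that it lands in $\range\xi$ are then immediate.

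The heart of the argument is the adjoint relation itself. Writing $v=b\xi\oplus m\otimesb h$ and $w=b'\xi\oplus m'\otimesb h'$, the image $\tilde{a}v=a\xi\otimesb\zeta b$ sits entirely in the $\MC\otimesb\hh$ summand, so the inner product on $\K$ gives $\langle\tilde{a}v,w\rangle_\K=\langle\zeta b,\langle a\xi,m'\rangle_\M\,h'\rangle_\hh=b^*\langle\zeta,\langle a\xi,m'\rangle_\M\,h'\rangle_\hh$, where the $b^*$ is produced by the conjugate-linearity of the first variable. On the other side $\tilde{a}^*w$ lies in $\range\xi$, so only the scalar corner of the $\K$-inner product survives and $\langle v,\tilde{a}^*w\rangle_\K=b^*\langle\zeta,a^*(m')h'\rangle_\hh$. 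Everything thus collapses to the single identity $\langle a\xi,m'\rangle_\M=a^*(m')$ as elements of $\range$. This is the step I expect to be the main obstacle, and it is where the structure is genuinely used: it follows from the adjointness of $a$ and $a^*$ as the off-diagonal corners of the selfadjoint $x$ on $\M$, namely $\langle a\xi,m'\rangle_\M=\langle\xi,a^*(m')\rangle_\M$, combined with the specified-vector identity $\langle\xi,c\xi\rangle_\M=c$ after writing $a^*(m')=c\xi\in\range\xi$.

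Finally I would conclude. The adjoint relation on the algebraic core shows in particular that $\tilde{a}$ is bounded there (a short estimate from the positivity of the two summands of the $\K$-inner product together with $\|\alpha(1_\range)\|$), hence extends continuously to all of $\K$; the formal-adjoint criterion for Hilbert $C^*$-modules then gives $\tilde{a}\in L(\K)$ with $\tilde{a}^*$ exactly the asserted formula. Apart from the identification $\langle a\xi,m'\rangle_\M=a^*(m')$, the only care required is the bookkeeping of the left and right $\range$-actions and of the conjugate in the first variable; the rest is routine.
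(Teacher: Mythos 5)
Your proposal is correct and follows essentially the same route as the paper: both verify the adjoint relation $\langle \tilde{a}v,w\rangle_\K=\langle v,\tilde{a}^*w\rangle_\K$ directly on elements $b\xi\oplus m\otimesb h$, reducing everything to the single identity $\langle a(b\xi),m'\rangle_\M=b^*a^*(m')$ coming from the corner decomposition of $x$ and the specified-vector property of $\xi$. The extra care you take with well-definedness on balanced tensors and with the bounded extension to all of $\K$ is sound but is left implicit in the paper's argument.
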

\begin{proof}
Notice that $\langle a(b\xi), m\rangle_\M= b^*a^*(m)$,  we have 
\begin{align*}
&\langle \tilde{a}(b\xi\oplus m\otimesb h), b'\xi\oplus m'\otimesb h'\rangle_\K\\
=&\langle a(\xi)\otimesb (\zeta b), b'\xi\oplus m'\otimesb h'\rangle_\K\\
=&\langle \zeta b, \langle a(b\xi),m'\rangle_M h'\rangle_\hh\\
=&b^*\langle \zeta , \langle a(\xi),m'\rangle_M h'\rangle_hh\\
=&b^*\langle \zeta ,  a^*(m') h'\rangle_hh\\
=&b^*\tilde{a}^*(b'\xi\oplus m'\otimesb h')\\
=&\langle b\xi\oplus m\otimesb h, \tilde{a}^*(b'\xi\oplus m'\otimesb h')\rangle_\K.
\end{align*}
The proof is done.
\end{proof}

 \begin{lemma}\normalfont  
 Let $\tilde{x}=\tilde{a}+\tilde{a}^*+\tilde{p}+\tilde{T}$. Then $\tilde{x}\in L(\K)$, namely $\tilde{x}$ is a selfadjointable operator. Moreover, $\tilde{x}=\tilde{x}^*.$
 \end{lemma}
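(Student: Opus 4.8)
The plan is to use that $L(\K)$ is closed under addition and under taking adjoints, so that it suffices to check adjointability of the four summands of $\tilde{x}$ separately and then recombine. The operator $\tilde{a}$ is already known to be adjointable by Lemma \ref{selfadjoint}, and consequently $\tilde{a}^{*}\in L(\K)$ with $(\tilde{a}^{*})^{*}=\tilde{a}$ automatically. Thus the only work left is to show that $\tilde{p}$ and $\tilde{T}$ are adjointable; in fact I expect each of them to be \emph{selfadjoint}, which will give $\tilde{x}^{*}=\tilde{x}$ in one stroke. Throughout I use that, since $x$ is selfadjoint, its corners $p$ and $T$ are selfadjoint, and that $p$ is a $\range$-constant by Lemma \ref{constant corner}.

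For $\tilde{p}$ I would compute the inner product directly. Because $\alpha$ is completely positive, hence $*$-preserving, and $p=p^{*}$, we have $\alpha(p)^{*}=\alpha(p^{*})=\alpha(p)$. Writing $v=b\xi\oplus m\otimesb h$ and $w=b'\xi\oplus m'\otimesb h'$, and noting that $\tilde{p}v=\alpha(p)b\xi$ lies in $\range\xi$, one obtains
\begin{align*}
\langle \tilde{p}v,w\rangle_{\K}=\langle \alpha(p)b\xi,\,b'\xi\rangle_{\K}=(\alpha(p)b)^{*}b'=b^{*}\alpha(p)b'=\langle v,\tilde{p}w\rangle_{\K},
\end{align*}
so $\tilde{p}\in L(\K)$ and $\tilde{p}^{*}=\tilde{p}$.

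For $\tilde{T}$, observe that its defining formula specifies $\tilde{T}^{*}$ as the map $S\colon b\xi\oplus m\otimesb h\mapsto Tm\otimesb h$, namely the ampliation $T\otimes 1$ on $\MC\otimesb\hh$ extended by zero on $\range\xi$. Since $T$ is adjointable on $\MC$, this ampliation is a well-defined adjointable operator on the interior tensor product, with adjoint $T^{*}\otimes 1$; this well-definedness on $\MC\otimesb\hh$ is the one genuinely non-formal point, and it is precisely what the Hilbert $C^{*}$-module theory of \cite{Lance} provides. Because $T=T^{*}$, the identity $\langle Tm,m'\rangle_{\M}=\langle m,Tm'\rangle_{\M}$ shows $S$ is itself selfadjoint, whence $\tilde{T}=S^{*}=S$ is adjointable with $\tilde{T}^{*}=\tilde{T}$. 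Assembling the four pieces,
\begin{align*}
\tilde{x}^{*}=\tilde{a}^{*}+(\tilde{a}^{*})^{*}+\tilde{p}^{*}+\tilde{T}^{*}=\tilde{a}^{*}+\tilde{a}+\tilde{p}+\tilde{T}=\tilde{x},
\end{align*}
so $\tilde{x}\in L(\K)$ and $\tilde{x}=\tilde{x}^{*}$. The only obstacle worth flagging is the tensor-product well-definedness of the ampliation; every remaining step is a routine inner-product manipulation.
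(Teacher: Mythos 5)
Your proposal is correct and follows essentially the same route as the paper: the paper likewise reduces to checking the adjoint identity summand by summand, verifies that $\tilde{p}$ and $\tilde{T}$ are selfadjoint by the same direct inner-product computations (using $p=p^*$, $T=T^*$), and invokes Lemma \ref{selfadjoint} for the $\tilde{a}$, $\tilde{a}^*$ pieces. Your only deviation is phrasing the $\tilde{T}$ step via the general theory of ampliations $T\otimes 1$ on the interior tensor product rather than computing $\langle \tilde{T}v,w\rangle_\K$ explicitly, which is a cosmetic difference (and your flagging of the well-definedness of the ampliation is a point the paper passes over silently).
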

\begin{proof}
It suffice to show that 
$$\langle \tilde{x}(b\xi\oplus m\otimesb h), b'\xi\oplus m'\otimesb h'\rangle_\K=\langle b\xi\oplus m\otimesb h, \tilde{x}(b'\xi\oplus m'\otimesb h')\rangle_\K,$$
for $b,b'\in \range$, $m,m'\in \MC$ and $h,h'\in H$.

Since $x$ is selfadjoint, we have $T$ is selfadjoint in $L(M)$ and 
\begin{align*}
\langle \tilde{T}(b\xi\oplus m\otimesb h), b'\xi\oplus m'\otimesb h'\rangle_\K&=\langle h,\langle m,T^*m' \rangle_\M h'\rangle_\hh
\\&=\langle h,\langle m,Tm' \rangle_\M h' \rangle_\hh\\
&=\langle b\xi\oplus m\otimesb h, \tilde{T} (b'\xi\oplus m'\otimesb h')\rangle_\K.
\end{align*}

Since $p\in \range$ is selfadjoint, we have  
\begin{align*}
\langle \tilde{p}(b\xi\oplus m\otimesb h), b'\xi\oplus m'\otimesb h'\rangle_\K&=b^*\alpha(p)^*b'\\
&=b^*\alpha(p)b'\\
&=\langle b\xi\oplus m\otimesb h, \tilde{p}(b'\xi\oplus m'\otimesb h')\rangle_\K.
\end{align*}

Together with Lemma \ref{selfadjoint}, the proof is done.
\end{proof}

\begin{proposition} \normalfont
Let $\tilde{\mu}$ be the distribution of $\tilde{x}$ with respect to $\langle \xi,  \xi \rangle_\K$. Then,
$$\tilde{\mu}=\mu^{\uplus \alpha},$$
where $\mu$ is the distribution of $x$ with respect to $\langle \xi,  \xi \rangle_\M$
\end{proposition}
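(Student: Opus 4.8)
The plan is to prove $\tilde\mu = \mu^{\uplus\alpha}$ by showing the two distributions have the same B-transform, i.e. $B^{[n]}_{\tilde x} = \alpha(B^{[n]}_x)$ for all $n\geq 1$, which by the definition of $\mu^{\uplus\alpha}$ identifies the distributions. The natural route is to compute the Boolean cumulants (equivalently, the $B^{[n]}$-series) of $\tilde x$ directly from its matrix decomposition and compare with those of $x$. First I would record the decomposition of $\tilde x$ on $\K = \range\xi \oplus \MC\otimesb\hh$: the $(\xi,\xi)$-corner of $\tilde x$ is $\tilde p = \alpha(p)$, the off-diagonal pieces are $\tilde a, \tilde a^*$, and the $\MC\otimesb\hh$-to-$\MC\otimesb\hh$ part is $\tilde T = T\otimes 1_\hh$. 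Then the formula for the $B^{[n]}$-series proved just above the statement applies verbatim to $\tilde x$: one has $B^{[1]}_{\tilde x} = \phi_\xi(\tilde x) = \tilde p = \alpha(p) = \alpha(\E[x]) = \alpha(B^{[1]}_x)$, and for $n\geq 2$,
\begin{equation}
B^{[n]}_{\tilde x}(b_1,\cdots,b_{n-1}) = \langle \tilde a^* b_1 \tilde T b_2 \cdots \tilde T b_{n-1}\tilde a\,\xi, \xi\rangle_\K.
\end{equation}

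Next I would evaluate this expression. Starting from $\xi$, apply $\tilde a$ to get $a(\xi)\otimesb(\zeta\cdot 1) = a(\xi)\otimesb\zeta$ (taking $b=1_\range$); then the chain of operators $\tilde T = T\otimes 1_\hh$ and the left multiplications $b_k\in\range$ act only on the $\MC$-leg, producing $b_1 T b_2\cdots T b_{n-1}\,a(\xi)\otimesb\zeta$; finally $\tilde a^*$ collapses the $\MC$-leg via the inner product $\langle\zeta, a^*(\cdot)\,\zeta\rangle_\hh$ from Lemma~\ref{selfadjoint}, leaving a multiple of $\xi$. The key point is that the $\hh$-legs assemble into $\langle\zeta, \langle a^* b_1 T b_2 \cdots T b_{n-1} a\,\xi,\xi\rangle_\M\,\zeta\rangle_\hh$, and since the inner object is exactly $B^{[n]}_x(b_1,\cdots,b_{n-1})\in\range$, this equals $\langle\zeta, B^{[n]}_x(b_1,\cdots,b_{n-1})\,\zeta\rangle_\hh = \alpha(B^{[n]}_x(b_1,\cdots,b_{n-1}))$ by the defining property of $(\hh,\zeta)$. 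Hence $B^{[n]}_{\tilde x} = \alpha(B^{[n]}_x)$ for every $n$, which is precisely the defining relation of $\mu^{\uplus\alpha}$.

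The main obstacle I anticipate is bookkeeping the tensor-leg action carefully, because $\tilde a$ ignores the incoming $\MC\otimesb\hh$ component (it only reads off the $\range\xi$-scalar) while $\tilde a^*$ maps the $\MC\otimesb\hh$-component back to $\range\xi$ via the $\hh$-pairing. One must verify that in the alternating product $\tilde a^* b_1\tilde T\cdots\tilde T b_{n-1}\tilde a$ applied to $\xi$, the intermediate vectors genuinely live in $\MC\otimesb\hh$ so that $\tilde T$ and the scalars act as claimed, and that no spurious $\range\xi$-contributions intrude; this is where the precise definitions of $\tilde a,\tilde a^*,\tilde T$ from the preceding lemmas do the work. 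A secondary point to check is that $\alpha$, being $\C$-multilinear-compatible as a completely positive map, commutes with the construction in the sense that applying $\alpha$ entrywise to $B^{[n]}_x$ is the same as extracting $B^{[n]}_{\tilde x}$; the dilation $(\hh,\zeta)$ with $\langle\zeta,b\zeta\rangle_\hh=\alpha(b)$ is exactly the device that encodes this, so once the leg-tracking is done the equality is immediate. Finally I would invoke the uniqueness of distributions determined by their moment/B-transform series to conclude $\tilde\mu=\mu^{\uplus\alpha}$.
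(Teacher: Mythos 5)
Your proposal is correct and follows essentially the same route as the paper: both identify $\tilde\mu$ with $\mu^{\uplus\alpha}$ by showing $B^{[n]}_{\tilde x}=\alpha(B^{[n]}_x)$ for all $n$, using the formula $B^{[n]}_x(b_1,\dots,b_{n-1})=\langle \xi, a^*b_1Tb_2\cdots Tb_{n-1}a\,\xi\rangle$ from the matrix decomposition and then tracking the tensor legs so that $\tilde a\xi=a(\xi)\otimesb\zeta$, the $\tilde T$'s and scalars act on the $\MC$-leg, and $\tilde a^*$ collapses through $\langle\zeta,\cdot\,\zeta\rangle_{\hh}=\alpha(\cdot)$. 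No substantive difference from the paper's argument.
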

\begin{proof}

By Lemma 3.9, we that 
$B^{[1]}_x=B_{1,x}(1_\range)=p$  and  $B^{[1]}_{\tilde{x}}=B_{1,\tilde{x}}(1_\range)=\alpha(p)=\alpha(B^{[1]}_x)$.

For $n\geq 2$,  we have 
\begin{align*}
B^{[n]}_x(b_1,\cdots,b_{n-1})&=B_{n,x}(b_1,\cdots,b_{n-1},1_\range)\\
&=\langle \xi,a^*b_1Tb_2T\cdots b_{n-1}T a\xi \rangle\\
&=a^*b_1Tb_2T\cdots b_{n-1}T a
\end{align*}
and 
\begin{align*}
B^{[n]}_{\tilde{x}}(b_1,\cdots,b_{n-1})&=B_{n,\tilde{x}}(b_1,\cdots,b_{n-1},1_\range)\\
&=\langle \xi, \tilde{a}^*b_1\tilde{T}b_2\tilde{T}\cdots b_{n-1}\tilde{T} \tilde{a}\xi\rangle_\K\\
&=\langle \xi, \tilde{a}^*b_1\tilde{T}b_2\tilde{T}\cdots b_{n-1}\tilde{T} \tilde{a}\xi\rangle_\K\\
&=\langle \xi \tilde{a}^*(b_1\tilde{T}b_2\tilde{T}\cdots b_{n-1}\tilde{T} a(\xi)\otimes\zeta) \rangle_\K\\
&=\langle \xi \tilde{a}^*[(b_1Tb_2T\cdots b_{n-1}T a(\xi))\otimes\zeta]\rangle_\K\\
&=\langle \xi, \langle \zeta, a^*(b_1Tb_2T\cdots b_{n-1}T a(\xi))\zeta\rangle_\hh\rangle_\K\\
&=\langle \xi, \alpha(a^*b_1Tb_2T\cdots b_{n-1}T a)\xi\rangle_\K\\
&= \alpha(a^*b_1Tb_2T\cdots b_{n-1}T a)\\
&= \alpha(B^{[n]}_x(b_1,\cdots,b_{n-1})).\\
\end{align*}
The proof is done.

\end{proof}

For $b\in\mathbb{H}^+$, by Proposition \ref{reciprocal Cauchy and representation}, we have that 
$F_{\mu_x,1}(b)=b-p-a^*(b-T)^{-1}a$, thus 
\begin{align*}
F_{\tilde{\mu},1}(b)&=\tilde{p}-b-\tilde{a} ^*(\tilde{T}-b)^{-1}\tilde{a}\\
&=b-\tilde{p}-\langle \xi, \tilde{a} ^*(b-\tilde{T})^{-1}\tilde{a}\xi\rangle_\K\\
&=b-\tilde{p}-\langle a(\xi)\otimes\zeta, (b-\tilde{T})^{-1}(a(\xi)\otimes\zeta)\rangle_\K\\
&=b-\tilde{p}-\langle a(\xi)\otimes\zeta,( (b-T)^{-1}a(\xi))\otimes\zeta\rangle_\K\\
&=b-\tilde{p}-\langle \zeta,\langle a(\xi), ( (b-T)^{-1}a(\xi))\rangle_\M\zeta\rangle_\hh\\
&=b-\alpha(p)-\alpha(\langle a(\xi), ( (b-T)^{-1}a(\xi))\rangle_\M)\\
&=b-\alpha(p)-\alpha(a^*(T-b)^{-1}a)\\
&=\alpha(F_{\mu,1}(b))+b-\alpha(b).
\end{align*}
Therefore, we have the following result.
\begin{lemma}\normalfont
 Let $\mu$  be a $\range$-valued distribution and let $\alpha$ be a completely positive map from $\range$ to $\range$.  Then, 
 $$F_{\mu^{\uplus \alpha},1}(b)=\alpha(F_{\mu,1}(b))+(1-\alpha)(b), $$
 for all $b\in\mathbb{H}^+$.
\end{lemma}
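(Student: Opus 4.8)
The plan is to realize the completely positive convolution power $\mu^{\uplus\alpha}$ concretely via the dilation $\tilde{x}=\tilde{a}+\tilde{a}^*+\tilde{p}+\tilde{T}$ constructed above on $\K=\range\xi\oplus(\MC\otimesb\hh)$, and then to compute both reciprocal Cauchy transforms using the single formula from Proposition \ref{reciprocal Cauchy and representation}. First I would fix a representation $\mu=\mu_x$ with $x$ decomposed into corners $p,a,a^*,T$ on $(L(\M),\phi_\xi)$, so that Proposition \ref{reciprocal Cauchy and representation} gives $F_{\mu,1}(b)=b-p-a^*[(b-T)^{-1}|_{\MC}]a$ for $b\in\mathbb{H}^+$. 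By the preceding proposition the distribution $\tilde{\mu}$ of $\tilde{x}$ is exactly $\mu^{\uplus\alpha}$, so it suffices to compute $F_{\tilde{\mu},1}(b)$ and match it against $\alpha(F_{\mu,1}(b))+(1-\alpha)(b)$.

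Applying Proposition \ref{reciprocal Cauchy and representation} to $\tilde{x}$, whose $(1,1)$-corner is $\tilde{p}=\alpha(p)\in\range$ and whose diagonal and off-diagonal pieces are $\tilde{T}$ and $\tilde{a}$, yields $F_{\tilde{\mu},1}(b)=b-\alpha(p)-\tilde{a}^*[(b-\tilde{T})^{-1}|_{\MC\otimesb\hh}]\tilde{a}$. The crux is to evaluate the last term. Because $\tilde{T}$ acts as $T$ on the $\MC$-leg of $\MC\otimesb\hh$ while fixing the $\hh$-leg, I would show that $(b-\tilde{T})^{-1}$ restricted to $\MC\otimesb\hh$ sends $a(\xi)\otimesb\zeta$ to $((b-T)^{-1}a(\xi))\otimesb\zeta$. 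Combining this with $\tilde{a}(\xi)=a(\xi)\otimesb\zeta$ and the defining inner product of $\K$, which contracts the $\hh$-leg through $\langle\zeta,\cdot\,\zeta\rangle_\hh=\alpha(\cdot)$ together with the adjoint formula of Lemma \ref{selfadjoint}, gives the key identity $\tilde{a}^*[(b-\tilde{T})^{-1}]\tilde{a}=\alpha(a^*(b-T)^{-1}a)$.

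Assembling, $F_{\tilde{\mu},1}(b)=b-\alpha(p)-\alpha(a^*(b-T)^{-1}a)=\alpha(b-p-a^*(b-T)^{-1}a)+b-\alpha(b)=\alpha(F_{\mu,1}(b))+(1-\alpha)(b)$, using $\C$-linearity of $\alpha$ and $\alpha(b)+(b-\alpha(b))=b$. I expect the main obstacle to be the middle identity $\tilde{a}^*[(b-\tilde{T})^{-1}]\tilde{a}=\alpha(a^*(b-T)^{-1}a)$: one must verify carefully that inversion of $b-\tilde{T}$ genuinely acts on the $\MC$-leg as $(b-T)^{-1}$ on the subspace reached by $\tilde{a}$, and that the $\hh$-inner product fed in by $\tilde{a}^*$ is converted into $\alpha$ at the level of the corner $a^*(b-T)^{-1}a$ rather than elsewhere. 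This is precisely the step where complete positivity of $\alpha$ is used, since it is what guarantees the existence of the Hilbert bimodule $(\hh,\zeta)$ with $\langle\zeta,b\zeta\rangle_\hh=\alpha(b)$; the remaining manipulations are bookkeeping in the bimodule $\K$.
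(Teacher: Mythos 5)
Your proposal is correct and follows essentially the same route as the paper: the paper also realizes $\mu^{\uplus\alpha}$ via the dilation $\tilde{x}$ on $\K=\range\xi\oplus\MC\otimesb\hh$, applies Proposition \ref{reciprocal Cauchy and representation} to both $x$ and $\tilde{x}$, and reduces everything to the identity $\tilde{a}^*(b-\tilde{T})^{-1}\tilde{a}=\alpha\bigl(a^*(b-T)^{-1}a\bigr)$, obtained exactly as you describe by noting that $(b-\tilde{T})^{-1}$ acts only on the $\MC$-leg and that the $\hh$-leg is contracted through $\langle\zeta,\cdot\,\zeta\rangle_\hh=\alpha(\cdot)$.
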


Similarly following  the previous construction,  we have the following result.

\begin{theorem}\label{reciprocal Cauchy transform of Boolean operator power convolution}\normalfont
 Let $\mu\in\cbd$  be a $\range$-valued distribution and let $\alpha$ be a completely positive map from $\range$ to $\range$.  For each $n\in\N$, we have 
 $$F_{\mu^{\uplus \alpha,n}}(b_n)=\alpha_n(F_{\mu,n}(b_n))+(1_n-\alpha_n)(b_n), $$
 for all $b\in\mathbb{H}_n^+$ and $\alpha_n=\alpha\otimes I_n$ is a map from $\range\otimes\mnc$ to $\range\otimes \mnc$ such that $\alpha_n(b\otimes e)=\alpha(b)\otimes e$ for all $b\in \range$ and $e\in \mnc$.
\end{theorem}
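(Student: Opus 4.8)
The plan is to deduce the theorem from the case $n=1$ (the preceding lemma) by the matricial extension philosophy that runs through the paper, rather than by re-running the Hilbert-module construction of $\tilde{x}$ over $\range\otimes\mnc$. Fix a $C^*$-operator $\range$-valued probability space $(\domain,\E)$ and a selfadjoint $x\in\domain$ with $\mu=\mu_x$. For each $n$ let $\mu^{(n)}$ denote the $\range\otimes\mnc$-valued distribution of $x\otimes I_n$ in $(\domain\otimes\mnc,\E\otimes I_n)$. Since $\range\otimes\mnc$ is itself a unital $C^*$-algebra, $\mu^{(n)}\in\Sigma_0(\range\otimes\mnc)$, and directly from the definitions
\[
G_{\mu^{(n)},1}(b_n)=(\E\otimes I_n)[(b_n-x\otimes I_n)^{-1}]=G_{\mu,n}(b_n),\qquad F_{\mu^{(n)},1}(b_n)=F_{\mu,n}(b_n),
\]
for $b_n\in\mathbb{H}_n^+$, where $\mathbb{H}_n^+$ is exactly the set $\mathbb{H}^+$ attached to the base algebra $\range\otimes\mnc$. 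Thus the matricial transform $F_{\mu,n}$ is simply the ordinary $\range\otimes\mnc$-reciprocal Cauchy transform of the amplified variable, and the same identification holds for $\mu^{\uplus\alpha}$.

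The crux is an \emph{amplification identity} for Boolean convolution powers:
\[
(\mu^{\uplus\alpha})^{(n)}=(\mu^{(n)})^{\uplus\alpha_n},\qquad \alpha_n=\alpha\otimes I_n .
\]
To prove it I would compare $B$-transforms. By $\C$-multilinearity it suffices to evaluate on matrix-unit arguments $b_j\otimes e(k_j,l_j)$. A short computation gives
\[
(\E\otimes I_n)[(x\otimes I_n)(b_1\otimes e(k_1,l_1))\cdots(x\otimes I_n)]=\E[xb_1x\cdots x]\otimes\big(e(k_1,l_1)\cdots e(k_{m-1},l_{m-1})\big),
\]
so the matrix-unit factor multiplies out to a single (possibly zero) matrix unit, independently of the interval partition. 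Solving the moment--cumulant recursion for the $B$-transform, and using Lemma \ref{equivalent boolean cumulants} to pass between $B_{n,\cdot}$ and $B^{[n]}_{\cdot}$, then yields
\[
B^{[m]}_{x\otimes I_n}(b_1\otimes e(k_1,l_1),\dots)=B^{[m]}_{x}(b_1,\dots)\otimes\big(e(k_1,l_1)\cdots e(k_{m-1},l_{m-1})\big).
\]
Applying $\alpha_n=\alpha\otimes I_n$ affects only the $\range$-factor, which is precisely the effect of replacing $B^{[m]}_x$ by $\alpha(B^{[m]}_x)=B^{[m]}_{\mu^{\uplus\alpha}}$; amplifying the latter back up reproduces the $B$-transform of $(\mu^{\uplus\alpha})^{(n)}$. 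Hence the two distributions have equal $B$-transforms, equal moments, and so coincide. I expect this to be the main obstacle, since it is the only place requiring genuine combinatorial bookkeeping; the point to watch is that $\alpha_n$ really is completely positive (amplifications of CP maps are CP), so that $(\mu^{(n)})^{\uplus\alpha_n}$ is a bona fide distribution.

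With these two facts in hand the theorem is immediate. The preceding lemma (the case $n=1$) is valid over any unital $C^*$-base algebra, so applying it with base algebra $\range\otimes\mnc$ and the completely positive map $\alpha_n$ gives
\[
F_{(\mu^{(n)})^{\uplus\alpha_n},1}(b_n)=\alpha_n\big(F_{\mu^{(n)},1}(b_n)\big)+(1_n-\alpha_n)(b_n)
\]
for all $b_n\in\mathbb{H}_n^+$. Substituting $F_{\mu^{(n)},1}=F_{\mu,n}$ and, via the amplification identity, $F_{(\mu^{(n)})^{\uplus\alpha_n},1}=F_{(\mu^{\uplus\alpha})^{(n)},1}=F_{\mu^{\uplus\alpha},n}$, this is exactly the asserted formula. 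This is the same mechanism by which Proposition \ref{reciprocal Cauchy Boolean additive convolution} was obtained from its scalar counterpart, and it bypasses re-running the construction of $\tilde{x}$ over $\range\otimes\mnc$.
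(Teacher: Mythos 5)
Your proposal is correct, but it reaches the conclusion by a genuinely different mechanism than the paper. The paper's proof is the single remark ``similarly following the previous construction'': it amplifies the concrete operator model, i.e.\ it re-runs the construction of $\K=\range\xi\oplus\MC\otimesb\hh$ and $\tilde{x}$ with $\range\otimes\mnc$, $\hh\otimes\mnc$ and $\zeta\otimes I_n$ in place of $\range$, $\hh$, $\zeta$ (so that the vector state implements $\alpha_n$), and repeats the resolvent computation of $F_{\tilde{\mu},1}$ at level $n$. You instead never touch the model at level $n$: you isolate the amplification identity $(\mu^{\uplus\alpha})^{(n)}=(\mu^{(n)})^{\uplus\alpha_n}$, prove it combinatorially by showing that the $B$-transform of $x\otimes I_n$ on matrix-unit arguments factors as $B^{[m]}_x(b_1,\dots)\otimes(e(k_1,l_1)\cdots e(k_{m-1},l_{m-1}))$ (a double induction on $m$ and on the number of blocks, using the order-preserving recursion for $\beta^{[\sigma]}$ and uniqueness of the moment--cumulant inversion), and then quote the $n=1$ lemma over the base algebra $\range\otimes\mnc$ with the CP map $\alpha_n$. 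Both routes ultimately rest on the same level-one computation; what yours buys is an explicit, reusable statement that Boolean convolution powers commute with matrix amplification (the paper uses this identification only implicitly, at the level of the operator model), at the cost of the combinatorial bookkeeping the paper avoids. Your argument is complete as sketched; the only points that deserve to be said out loud are the ones you already flag --- that $\alpha_n$ is again completely positive, that $\mu^{(n)}\in\Sigma_0(\range\otimes M_n(\C))$, and that $F_{\mu,n}$ (resp.\ $F_{\mu^{\uplus\alpha},n}$) coincides with the level-one reciprocal Cauchy transform of the amplified distribution, which is immediate from the definition of the matricial Cauchy transform.
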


\section{Monotone Convolutions} 
In this section, we study some analytic properties of the $\range$-valued monotone additve convolution. The combinatorial aspects of  $\range$-valued monotone additive convolution are studied in \cite{Po2}. Given $x\in (\domain,\E)$, we denote by $alg_0\{x,\range\}=\{p(x)|p\in\range\langle X\rangle_0\}$ and $alg\{x,\range\}=\{p(x)|p\in\range\langle X\rangle\}$.

\begin{lemma}\normalfont
Let $x,y\in(\domain,\E)$ be two $\range$-valued random variables such that $x,y$ are monotone. Then $alg_0\{x,\range\}$ and $alg\{y,\range\}$ are monotone independent.
\end{lemma}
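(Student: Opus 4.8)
The final statement asks to prove that if $x, y \in (\domain, \E)$ are monotone independent $\range$-valued random variables, then the subalgebras $alg_0\{x, \range\}$ (polynomials in $x$ with no constant term) and $alg\{y, \range\}$ (all polynomials in $y$) are monotone independent. Let me think about what this requires.

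The monotone independence of subalgebras is defined by the condition that $\E[a_1 \cdots a_{k-1} a_k a_{k+1} \cdots a_n] = \E[a_1 \cdots a_{k-1} \E[a_k] a_{k+1} \cdots a_n]$ whenever $a_k \in \domain_{i_k}$ and $i_k > i_{k-1}, i_{k+1}$. Here the ordered index set is $\{1, 2\}$ with $1 < 2$, and the two algebras are $alg_0\{x, \range\}$ (index $1$) and $alg\{y, \range\}$ (index $2$). So I want to verify the monotone factorization condition with these two specific algebras.

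Let me think about why this reduces to the monotone independence of $x$ and $y$ themselves.

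**The issue and the plan.**

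The subtlety is that monotone independence of the *variables* $x$ and $y$ is phrased in terms of the algebras they generate — and the standard convention attaches the unital algebra generated by $x$ to index $1$ and similarly for $y$. But for monotone convolutions the asymmetric roles matter: $y$ (the larger index) is allowed constants while $x$ (the smaller index) must be constant-free. So the content of the lemma is precisely that the factorization identity still holds when we use $alg_0\{x,\range\}$ rather than the full unital algebra $alg\{x,\range\}$ at the smaller index — indeed this is the *correct* setup, and it is what makes the monotone convolution definition (Example of monotone, with $\J_1 = \{\emptyset, (1)\}$) coherent.

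Concretely, let me think about what I need to show.
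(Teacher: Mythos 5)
Your proposal stops before the proof begins: after restating the claim and sketching the setup, it ends with ``let me think about what I need to show'' and never carries out any argument. The entire content of the lemma is missing. What is needed (and what the paper does) is this: take a word $a_1\cdots a_n$ in which $a_k\in alg\{y,\range\}$ is flanked by $a_{k-1},a_{k+1}\in alg_0\{x,\range\}$, decompose $a_k=a_k'+b$ with $a_k'\in alg_0\{y,\range\}$ and $b\in\range$, apply the monotone independence of the non-unital subalgebras generated by $x$ and $y$ (which is how the paper defines monotone independence of the variables) to the $a_k'$ summand, observe that the $b$ summand satisfies the factorization identity trivially because $\E[b]=b$ and $\E$ is a $\range$-bimodule map, and add the two identities. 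None of this appears in your write-up.

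Separately, your identification of where the nontrivial content lies is reversed. The paper's definition of monotone independence of $x$ and $y$ already works with the non-unital bimodule subalgebras they generate, i.e.\ $alg_0\{x,\range\}$ and $alg_0\{y,\range\}$; the lemma enlarges the algebra at the \emph{larger} index, replacing $alg_0\{y,\range\}$ by the unital $alg\{y,\range\}$, while nothing changes at the smaller index. You write that the point is to ``use $alg_0\{x,\range\}$ rather than the full unital algebra at the smaller index,'' but passing to a subalgebra preserves any independence relation trivially; the direction that actually requires an argument is adjoining the constants to the $y$-algebra, and that is exactly the step your proposal never reaches.
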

\begin{proof} Let $a_1,\cdots, a_n\in alg_0\{x,\range\}\cup alg\{y,\range\}$ such that $a_k\in alg\{y,\range\}$ and $a_{k-1},a_{k+1}\in alg_0\{x,\range\}$.
Then $a_k=a'_k+b$ for some $a_k'\in alg_0\{y,\range\}$ and $b\in \range$. Since $x,y$ are monotone independent, we have 
$$\E[a_1\cdots a_{k-1}a'_ka_{k+1} a_n]=\E[[a_1\cdots a_{k-1}\E[a'_k]a_{k+1} a_n].$$
On the other hand, 
$$\E[a_1\cdots a_{k-1}ba_{k+1} a_n]=\E[[a_1\cdots a_{k-1}\E[b]a_{k+1} a_n].$$
Therefore,
$$\E[a_1\cdots a_{k-1}a_ka_{k+1} a_n]=\E[[a_1\cdots a_{k-1}\E[a_k]a_{k+1} a_n].$$
Since $a_1,\cdots, a_n\in alg_0\{x,\range\}\cup alg\{y,\range\}$ are arbitrary, the proof is done.
\end{proof}

\begin{lemma}\normalfont
Let $x,y\in(\domain,\E)$ be two $\range$-valued random variables such that $x,y$ are monotone. Given $a\in alg(x,y,\range), x_1\in alg_0\{x, \range\}, x_2\in alg\{x, \range\}$ and $y'\in alg\{y,\range\}$, then 
$$\E[ax_1y'x_2]=\E[ax_1\E[y']x_2].$$
\end{lemma}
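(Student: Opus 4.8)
The plan is to reduce $a$ to a single reduced word and then apply the monotone independence of $alg_0\{x,\range\}$ and $alg\{y,\range\}$ established in the preceding lemma. By $\C$-linearity of $\E$ it suffices to treat the case where $a$ is a monomial $b_0z_1b_1\cdots z_pb_p$ with each $z_j\in\{x,y\}$ and $b_j\in\range$. The crucial observation is that, because $x_1$ lies in the \emph{non-unital} algebra $alg_0\{x,\range\}$, the product $ax_1$ can be rewritten---after grouping maximal runs of $x$'s and of $y$'s and absorbing the intervening elements of $\range$---as a finite sum of reduced words $w=u\,m$, where the last factor $m$ belongs to $alg_0\{x,\range\}$ and $u$ is either empty or a reduced alternating word (factors alternating between $alg_0\{x,\range\}$ and $alg_0\{y,\range\}$) whose last factor lies in $alg_0\{y,\range\}$. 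In particular every such $w$ ends in a factor of index $1$; this is exactly where the hypothesis that $x_1$ has no constant term is used.

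First I would fix one such $w=u\,m$ and split $x_2=\beta+x_2^0$ with $\beta\in\range$ and $x_2^0\in alg_0\{x,\range\}$, which is possible since $alg\{x,\range\}=\range\cdot 1_\domain+alg_0\{x,\range\}$. Then $\E[w\,y'\,x_2]=\E[w\,y'\,x_2^0]+\E[w\,y'\,\beta]$. In the first summand $y'\in alg\{y,\range\}$ has index $2$ and sits between $m$ and $x_2^0$, both of index $1$, so the preceding lemma lets me pull out its expectation: $\E[w\,y'\,x_2^0]=\E[w\,\E[y']\,x_2^0]$. In the second summand, since $\beta\in\range$ and $\E$ is a bimodule map, $\E[w\,y'\,\beta]=\E[w\,y']\beta$; here $y'$ is the rightmost factor, its only neighbour $m$ has index $1$, and the boundary instance of the monotone relation (where the condition $i_k>i_{k+1}$ is vacuous) gives $\E[w\,y']=\E[w\,\E[y']]=\E[w]\E[y']$, so that $\E[w\,y'\,\beta]=\E[w\,\E[y']\,\beta]$. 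Adding the two summands yields $\E[w\,y'\,x_2]=\E[w\,\E[y']\,x_2]$, and summing over the finitely many $w$ in $ax_1=\sum_w w$ gives $\E[a\,x_1\,y'\,x_2]=\E[a\,x_1\,\E[y']\,x_2]$.

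The main obstacle is the combinatorial bookkeeping in putting $ax_1$ into reduced alternating form so that the hypotheses of the previous lemma apply verbatim---in particular checking that no term can end in an index-$2$ factor. A secondary point to treat carefully is the constant part $\beta$ of $x_2$: there $y'$ becomes the last letter of the word, so one must invoke the boundary case of monotone independence rather than the interior case used for $x_2^0$; it is the presence of a possible constant term in $x_2$ (in contrast to $x_1$) that forces this split.
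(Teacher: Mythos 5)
Your proof is correct and follows essentially the same route as the paper: split $x_2=\beta+x_2^0$ with $x_2^0\in alg_0\{x,\range\}$, apply the interior monotone relation (from the preceding lemma) to the $x_2^0$ term and the boundary case plus the bimodule property of $\E$ to the $\beta$ term, then add. The paper simply suppresses the bookkeeping you spell out about putting $ax_1$ into reduced alternating form ending in an index-$1$ factor; making that explicit is a harmless (indeed welcome) refinement, not a different argument.
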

\begin{proof}
Since $x_2\in alg\{x, \range\}$, there exists $x_2'\in alg_0\{x, \range\}$ such that $x_2=x_2'+b$ for some $b\in \range$. Therefore,
\begin{align*}
  \E[ax_1y'x_2]
=&\E[ax_1 y'(x_2'+b)]\\
=&\E[ax_1 y'x_2']+\E[ax_1yb]\\
=&\E[ax_1 \E[y']x_2']+\E[ax_1]\E[y'b]\\
=&\E[ax_1 \E[y']x_2']+\E[ax_1\E[y']b]\\
=&\E[ax_1\E[y']x_2],
  \end{align*}
which is desired.
\end{proof}

\begin{lemma}\label{Monotone property 1}\normalfont Let $x,y\in(\domain,\E)$ be two $\range$-valued selfadjoint random variables such that $x,y$ are monotone.  For  $b\in \mathbb{H}^+$, we have  
$$\E[(b-y)(b-x-y)^{-1}])=\E[(1-xG_y(b))^{-1}].$$
\end{lemma}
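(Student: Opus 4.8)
The plan is to reduce the stated identity to a resolvent factorization plus a term-by-term application of monotone independence. First I would record the factorization
$$b-x-y=(1-x(b-y)^{-1})(b-y),$$
valid in $\domain$ for $b\in\mathbb{H}^+$ because $b-y$ and $b-x-y$ are invertible there (both have strictly positive imaginary part, since $x=x^*$, $y=y^*$). Inverting and multiplying by $b-y$ on the left gives
$$(b-y)(b-x-y)^{-1}=(1-x(b-y)^{-1})^{-1},$$
so it suffices to prove $\E[(1-x(b-y)^{-1})^{-1}]=\E[(1-xG_y(b))^{-1}]$, where $G_y(b)=\E[(b-y)^{-1}]$.

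For $b\in\mathbb{H}^+$ with large enough imaginary part both $\|x(b-y)^{-1}\|<1$ and $\|xG_y(b)\|<1$, so each inverse expands as a norm-convergent Neumann series; by norm-continuity of $\E$ the claim reduces to the term-by-term identity
$$\E[(x(b-y)^{-1})^k]=\E[(xG_y(b))^k],\qquad k\geq 0.$$
I would establish this by pulling the resolvents $(b-y)^{-1}$ out one at a time, from right to left. Here $x\in alg_0\{x,\range\}$, the factor $(b-y)^{-1}$ lies in the norm closure of $alg\{y,\range\}$, and $G_y(b)\in\range$ so $xG_y(b)\in alg\{x,\range\}$. The rightmost $(b-y)^{-1}$ is flanked on its left by $x\in alg_0\{x,\range\}$ and on its right by $1\in alg\{x,\range\}$, so the second monotone lemma (extended from polynomial test elements to resolvents via norm-continuity of $\E$, exactly as in Lemma \ref{boolean product 2}) replaces it by $\E[(b-y)^{-1}]=G_y(b)$. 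After this replacement the string immediately to the right of the next resolvent has become $xG_y(b)\cdots\in alg\{x,\range\}$, so the hypotheses hold again; an induction then replaces every $(b-y)^{-1}$ by $G_y(b)$ and produces $\E[(xG_y(b))^k]$.

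Summing the two series gives the desired equality for $b$ deep in $\mathbb{H}^+$, and it extends to all $b\in\mathbb{H}^+$ by analytic continuation: the left side is manifestly analytic there, and the right side is too, since $1-xG_y(b)=(G_y(b)^{-1}-x)G_y(b)$ with $G_y(b)^{-1}-x$ having strictly positive imaginary part (as $G_y(b)^{-1}\in\mathbb{H}^+$ and $x=x^*$), hence invertible. The main obstacle I anticipate is the bookkeeping in the inductive step: one must verify at each stage that the left neighbour of the active resolvent is the centered element $x\in alg_0\{x,\range\}$ and that the entire string to its right lies in $alg\{x,\range\}$, so that the monotone lemma applies with test elements of the allowed types. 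Justifying the passage from polynomial test elements to resolvents by norm-continuity is the only other delicate point; everything else is formal.
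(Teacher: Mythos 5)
Your proof is correct, and your opening reduction --- the factorization $b-x-y=(1-x(b-y)^{-1})(b-y)$, hence $(b-y)(b-x-y)^{-1}=(1-x(b-y)^{-1})^{-1}$ --- is exactly the paper's first step. Where you genuinely diverge is in comparing $\E[(1-x(b-y)^{-1})^{-1}]$ with $\E[(1-xG_y(b))^{-1}]$: you expand both inverses as Neumann series for $b$ deep in $\mathbb{H}^+$, replace the resolvents $(b-y)^{-1}$ by $G_y(b)$ one at a time using the monotone lemma $\E[ax_1y'x_2]=\E[ax_1\E[y']x_2]$, and then invoke analytic continuation. The paper instead applies the resolvent identity $A^{-1}-B^{-1}=A^{-1}(B-A)B^{-1}$ with $A=1-x(b-y)^{-1}$ and $B=1-xG_y(b)$, so the difference of the two expectations becomes $\E[(1-x(b-y)^{-1})^{-1}\,x\,(G_y(b)-(b-y)^{-1})\,(1-xG_y(b))^{-1}]$, which vanishes by a \emph{single} application of that same monotone lemma, since the middle factor lies in the norm closure of $alg\{y,\range\}$ and has zero expectation. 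Both arguments rest on the identical independence input, extended from polynomials to norm closures by continuity of $\E$; the paper's version buys a one-step proof valid on all of $\mathbb{H}^+$ with no convergence or continuation bookkeeping, while yours trades that for an explicit term-by-term moment identity $\E[(x(b-y)^{-1})^k]=\E[(xG_y(b))^k]$ and an induction whose hypotheses must be re-checked at each stage. Your continuation step is sound ($\mathbb{H}^+$ is convex, hence connected, both sides are analytic there, and they agree on the open set where $\|x(b-y)^{-1}\|<1$ and $\|xG_y(b)\|<1$), but the resolvent-difference trick makes it unnecessary.
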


\begin{proof}
Since $x,y$ are selfadjoint and $b\in \mathbb{H}^+$, $b-y$  and $b-x-y$ are invertible.  It follows that  
$$ \E[(b-y)(b-x-y)^{-1}])=\E[(1-x(b-y)^{-1})^{-1}]).$$
Therefore, we have 
 \begin{align*}
 & \E[(1-x(b-y)^{-1})^{-1}])-\E[(1-xG_y(b))^{-1}]\\
=& \E[(1-x(b-y)^{-1})^{-1}(xG_y(b)-x(b-y)^{-1})(1-xG_y(b))^{-1}]\\
=& \E[(1-x(b-y)^{-1})^{-1}(x(G_y(b)-(b-y)^{-1}))(1-xG_y(b))^{-1}].
  \end{align*}

Notice that $(1-xG_y(b))^{-1}\in \overline{alg\{x,\range\}}^{\|\cdot\|}$, $(G_y(b)-(b-y)^{-1})\in \overline{alg\{y,\range\}}^{\|\cdot\|}$, $\E[(G_y(b)-(b-y)^{-1})]=0$ and $\E$ is norm-continuous, we have that 
\begin{align*}
 & \E[(1-x(b-y)^{-1})^{-1}x(G_y(b)-(b-y)^{-1})(1-xG_y(b))^{-1}]\\
=&\E[(1-x(b-y)^{-1})^{-1}x\E[G_y(b)-(b-y)^{-1}](1-xG_y(b))^{-1}]\\
=&0.
  \end{align*}
  The proof is done.
\end{proof}

\begin{lemma}\normalfont
Let $\mu,\nu\in\cbd$. Then,
$$ F_{\mu\rhd \nu,1}(b)=F_{\mu,1}(F_{\nu,1}(b)),$$
 for  $b\in \mathbb{H}^+$.
\end{lemma}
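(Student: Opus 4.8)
The plan is to reduce the identity to the resolvent formula already packaged in Lemma \ref{Monotone property 1}, and then to clear the extra factor $b-y$ by a boundary factorization coming from monotone independence. I would pick selfadjoint monotone independent $x,y$ in some $(\domain,\E)$ with $\mu=\mu_x$ and $\nu=\mu_y$, so that $y$ carries the larger index (this is the convention forced by the proof of Lemma \ref{Monotone property 1}, where the $y$-term is the one pulled out). Then $\mu\rhd\nu$ is the distribution of $x+y$, so $G_{\mu\rhd\nu,1}(b)=\E[(b-x-y)^{-1}]$. Set $c=G_{\nu,1}(b)=\E[(b-y)^{-1}]\in\range$; since $G_{\nu,1}$ maps $\mathbb{H}^+$ into $\mathbb{H}^-$, $c$ is invertible with $c^{-1}=F_{\nu,1}(b)\in\mathbb{H}^+$, and $F_{\nu,1}(b)-x$ is invertible so that $G_{\mu,1}(F_{\nu,1}(b))$ is defined. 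First I would rewrite the output of Lemma \ref{Monotone property 1}: using $1-xc=(c^{-1}-x)c$ and pulling the constant $c^{-1}\in\range$ out of $\E$,
\begin{align*}
\E[(b-y)(b-x-y)^{-1}]=\E[(1-xc)^{-1}]=c^{-1}\E[(c^{-1}-x)^{-1}]=G_{\nu,1}(b)^{-1}\,G_{\mu,1}(F_{\nu,1}(b)).
\end{align*}
This already produces the desired right-hand side $G_{\mu,1}(F_{\nu,1}(b))$, up to the spurious left factor $G_{\nu,1}(b)^{-1}$.

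The second, and main, step is to show that this spurious factor is exactly compensated by the $(b-y)$ standing in front of the resolvent, i.e.\ that
$$G_{\mu\rhd\nu,1}(b)=\E[(b-x-y)^{-1}]=G_{\nu,1}(b)\,\E[(b-y)(b-x-y)^{-1}].$$
To this end I would factor $b-x-y=(1-x(b-y)^{-1})(b-y)$, so that $(b-x-y)^{-1}=(b-y)^{-1}W$ with $W=(1-x(b-y)^{-1})^{-1}=(b-y)(b-x-y)^{-1}$, and then establish the factorization $\E[(b-y)^{-1}W]=G_{\nu,1}(b)\,\E[W]$. Writing $(b-y)^{-1}=c+Y_0$ with $Y_0=(b-y)^{-1}-c$ a norm-limit of elements of the non-unital $y$-algebra and $\E[Y_0]=0$, the constant part contributes $c\,\E[W]=G_{\nu,1}(b)\E[W]$, so it remains to verify $\E[Y_0W]=0$. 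Here I would use the self-consistent identity $W=1+x(b-y)^{-1}W$: since $\E[Y_0]=0$, this gives $\E[Y_0W]=\E[Y_0\,x\,(b-y)^{-1}W]$, and now $Y_0$ is a centered element of the larger-index algebra sitting at the left end of the word with an $x$-factor (smaller index) immediately to its right, so the boundary instance of monotone independence yields $\E[Y_0\,x\,(b-y)^{-1}W]=\E[Y_0]\,\E[x(b-y)^{-1}W]=0$. Norm-continuity of $\E$, exactly as invoked in Lemma \ref{Monotone property 1}, legitimizes applying this to the norm-limit $(b-y)^{-1}W$.

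Combining the two steps gives $G_{\mu\rhd\nu,1}(b)=G_{\nu,1}(b)\,G_{\nu,1}(b)^{-1}G_{\mu,1}(F_{\nu,1}(b))=G_{\mu,1}(F_{\nu,1}(b))$, and taking inverses yields $F_{\mu\rhd\nu,1}(b)=F_{\mu,1}(F_{\nu,1}(b))$, for $b\in\mathbb{H}^+$. I expect the only delicate point to be the factorization $\E[Y_0W]=0$: unlike the cancellation in Lemma \ref{Monotone property 1}, the centered $y$-term here sits at the \emph{boundary} of the word rather than between two $x$-factors, so it relies on the one-sided (boundary peak) form of monotone independence, and the rewriting $W=1+x(b-y)^{-1}W$ is precisely what reduces it to a single such application. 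If one prefers to avoid the boundary instance altogether, the same identity can be obtained by expanding $W=\sum_{n\ge 0}(x(b-y)^{-1})^{n}$ as a norm-convergent Neumann series for $b$ with large imaginary part, killing each term by the left-peak property, and then extending to all of $\mathbb{H}^+$ by analyticity.
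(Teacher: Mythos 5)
Your proof is correct and follows essentially the same route as the paper: both reduce the claim to the identity $\E[(b-x-y)^{-1}]=\E[(b-y)^{-1}]\,\E[(b-y)(b-x-y)^{-1}]$ via the boundary (left-peak) instance of monotone independence with $y$ carrying the larger index, and then conclude with Lemma \ref{Monotone property 1} and the rearrangement $c(1-xc)^{-1}=(c^{-1}-x)^{-1}$ for $c=G_{\nu,1}(b)$. The only difference is cosmetic: the paper derives the factorization from the resolvent identity $(b-x-y)^{-1}=(b-y)^{-1}+(b-y)^{-1}x(b-x-y)^{-1}$, which is precisely your relation $W=1+x(b-y)^{-1}W$ multiplied on the left by $(b-y)^{-1}$.
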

\begin{proof}
Let $x,y$ be monotone independent selfadjoint random variables  from a $\range$-valued probability space $(\domain, \E)$ such that $\mu$ equals the distribution of $x$ and $\nu$ equals the distribution of $y$. 
By Lemma \ref{Monotone property 1}, we have 
 \begin{align*}
  G_{\mu\rhd \nu,1}(b)&=\E[(b-x-y)^{-1}]\\
&= \E[(b-y)^{-1}] +\E[(b-y)^{-1}x(b-x-y)^{-1}] \\
&=\E[(b-y)^{-1}] +\E[(b-y)^{-1}]\E[x(b-x-y)^{-1}] \\
&=\E[(b-y)^{-1}] [1_{\range}+\E[x(b-x-y)^{-1}] \\
&=\E[(b-y)^{-1}] (1_{\range}+\E[x(b-x-y)^{-1}])\\
&=\E[(b-y)^{-1}] (\E[(b-y)(b-x-y)^{-1}])\\
&=G_{\nu,1}(b)\E[(1-xG_{\nu,1}(b))^{-1}]\\
&=\E[(G_{\nu,1}(b)^{-1}-x)^{-1}]\\
&=G_{\mu,1}(F_{\nu,1}(b)).
      \end{align*}
The proof is done.
\end{proof}

Notice that monotone  additive convolution is a kind of $\J$- additive convolution, the  $\range\otimes\mnc$-valued distribution of $x\otimes I_n+y\otimes I_n$ is then the monotone additive convolution of the distributions of $x\otimes I_n$ and $y\otimes I_n$.  Therefore, we have the following result.

\begin{proposition}\label{reciprocal Cauchy Monotone additive convolution}\normalfont
Let $\mu,\nu\in\cbd$. For $n\in \N$, we have 
$$F_{\mu\rhd \nu,n}(b_n)=F_{\mu,n}(F_{\nu,n}(b_n)),$$
 for  $b_n\in \mathbb{H}_n^+$. 
\end{proposition}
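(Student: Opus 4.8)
The plan is to derive the matricial identity from its scalar-valued analogue by invoking the matricial extension machinery already set up in Section 3. The statement to prove is
$$F_{\mu\rhd\nu,n}(b_n)=F_{\mu,n}(F_{\nu,n}(b_n)),$$
for $\mu,\nu\in\cbd$, all $n\in\N$, and $b_n\in\mathbb{H}_n^+$. The key observation, already flagged in the remark preceding the statement, is that monotone additive convolution is an instance of $\J$-additive convolution (Example \ref{Example of monotone}), so Proposition \ref{matricial extension} applies: if $x,y$ are monotone independent in $(\domain,\E)$, then $x\otimes I_n$ and $y\otimes I_n$ are monotone independent in $(\domain\otimes\mnc,\E_n)$. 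This is precisely what converts a base-level identity into its matricial counterpart.

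First I would realize $\mu$ and $\nu$ as distributions of monotone independent selfadjoint random variables $x,y$ in some $(\domain,\E)$, exactly as in the proof of the scalar ($n=1$) lemma just above. Next I would pass to the $\range\otimes\mnc$-valued probability space $(\domain\otimes\mnc,\E_n)$ and set $X=x\otimes I_n$, $Y=y\otimes I_n$. By Proposition \ref{matricial extension}, $X$ and $Y$ are monotone independent over $\range\otimes\mnc$, and the $\range\otimes\mnc$-valued distribution of $X+Y$ is the monotone convolution of the distributions of $X$ and $Y$; equivalently, by the matricial reformulation of the convolution (cf. Proposition \ref{Matricial convolution}), this distribution encodes $\mu\rhd\nu$ at level $n$. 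Then I would apply the already-proven base identity $F_{\sigma\rhd\tau,1}=F_{\sigma,1}\circ F_{\tau,1}$ — but now in the ground algebra $\range\otimes\mnc$ rather than $\range$. Since the $\range\otimes\mnc$-valued reciprocal Cauchy transform of a variable of the form $z\otimes I_n$ evaluated at $b_n\in\mathbb{H}_n^+$ is exactly the matricial transform $F_{\mu_z,n}(b_n)$ (this is the definition of the matricial Cauchy/reciprocal-Cauchy transforms given in the Fully matricial functions subsection), the base identity reads $F_{\mu\rhd\nu,n}(b_n)=F_{\mu,n}(F_{\nu,n}(b_n))$, which is the claim.

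The step I expect to require the most care is the dictionary between the two viewpoints: verifying that the $\range\otimes\mnc$-valued reciprocal Cauchy transform of $x\otimes I_n$, computed in $(\domain\otimes\mnc,\E_n)$, coincides with $F_{\mu,n}$ as defined via the matricial function $(\E\otimes I_n)_{n\geq1}$, and likewise that the distribution of $(x+y)\otimes I_n = X+Y$ genuinely represents $(\mu\rhd\nu)$ at matricial level $n$. Both are essentially bookkeeping: the first is immediate from $G_{\mu,n}(b_n)=\E_n[(b_n-x\otimes I_n)^{-1}]$ together with $F_{\mu,n}=G_{\mu,n}^{-1}$, and the second is the content of Proposition \ref{matricial extension} specialized to the monotone case. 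Once this identification is in place, the entire argument reduces to reading off the $n=1$ lemma over the larger scalar algebra $\range\otimes\mnc$, so no genuinely new analytic estimate is needed — the work has all been front-loaded into the matricial extension property.
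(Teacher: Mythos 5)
Your proposal is correct and follows essentially the same route as the paper: the paper likewise proves the $n=1$ identity as a separate lemma and then, citing that monotone convolution is a $\J$-additive convolution so that Proposition \ref{matricial extension} upgrades monotone independence of $x,y$ over $\range$ to monotone independence of $x\otimes I_n,y\otimes I_n$ over $\range\otimes\mnc$, reads off the matricial statement by applying the base lemma over the amplified algebra. Your additional care about the dictionary between $F_{\mu,n}$ and the $\range\otimes\mnc$-valued reciprocal Cauchy transform of $x\otimes I_n$ is a point the paper leaves implicit, but it is the same argument.
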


\section{Orthogonal Convolution}In this section, we study some analytic properties of the orthogonal additive convolutions.
Let $\mu_1,\mu_2\in\cbd$. 
For $i=1,2$, let $(\M_i,\xi_i)$ be  a $\range$-$\range$ bimodules with  specified vectors and $x_i\in (L(\M_i),\phi_{\xi_i})$ such that $\mu_i$ is equal to the distribution of $x_i$.  Let $(\M,\xi)$ be the reduced free product of $(\M_i,\xi)$ and $\lambda_i:L(\M_i)\rightarrow L(M)$ are the  embedding isomorphisms defined in Section 3.  Let $\J_1=\{\emptyset,(1)\}$ and $\J_2=\{(1), (2,1)\}$. Then, $P^{\M}_{\J_1}$ is the orthogonal projection onto 
$$\range_\xi\oplus\MC_1$$
and 
$P^{\M}_{\J_1}$ is the orthogonal projection onto 
$$ \MC_1\oplus\MC_2\otimesb\MC_1.$$
Then, the orthogonal additive convolution $\mu_1\vdash\mu_2$ of $\mu$ and $\nu$ is the distribution of $P^{\M}_{\J_1}\lambda_1(x_1)+P^{\M}_{\J_2}\lambda_2(x_2)$ in $(L(\M),\phi_{\xi})$.

Notice that  $P^{\M}_{\J_1}$ and $P^{\M}_{\J_2}$ are projections from $\M$ into
$$\M_{\vdash}=\range\xi\oplus\MC_1\oplus\MC_2\otimesb\MC_1$$
and $(\M_{\vdash},\xi)$ is a $\range$-$\range$ bimodules with  specified vector,
it follows that the distribution of $P^{\M}_{\J_1}\lambda_1(x_1)+P^{\M}_{\J_2}\lambda_2(x_2)$ in $(L(\M),\phi_{\xi})$ is equal to 
the distribution of $P^{\M}_{\J_1}\lambda_1(x_1)+P^{\M}_{\J_2}\lambda_2(x_2)$ in $(L(\M_\vdash),\phi_{\xi})$.

On the other hand, let $\M_{\rhd}=\range\xi\oplus\MC_1\oplus \MC_2\oplus\MC_2\otimesb\MC_1$. Then $\M_{\rhd}$ is also a $\range$-$\range$ bimodules with  specified vector such that 
$$ \MC_{\rhd}=\MC_2\oplus \MC_{\vdash}.$$
Therefore,  $L(\M_2)$ and $L(\M_{\vdash})$ are boolean independent in $(L(\M_{\rhd}),\phi\xi)$. Let $\J_2'=\{\emptyset,(2)\}$. Then $P^{\M}_{\J_2'}$ is the orthogonal projection onto $\range\xi\oplus\MC_2$. It follows that 
the distribution of 
$$P^{\M}_{\J_2'}\lambda_2(x_2)+P^{\M}_{\J_1}\lambda_1(x_1)+P^{\M}_{\J_2}\lambda_2(x_2) $$
is the Boolean convolution of $\mu_2$ and $\mu_1\vdash\mu_2$.
Notice that $P^{\M}_{\J_2'}+P^{\M}_{\J_2}=P^{\M}_{\J''_2}$ where $\J''_2=\{\emptyset,(1),(2),(1,2)\}$, and $\J'=\{\J_1,\J''_2\}$-additive convolution of $\mu_1$ and $\mu_2$ is $\mu_1\rhd\mu_2$. Therefore, we have the following result.
\begin{proposition}\normalfont\label{orthogonal, boolean and monotone}Let $\mu,\nu\in \cbd$. Then
$$\mu\rhd \nu=\nu \uplus (\mu\vdash \nu ).$$
\end{proposition}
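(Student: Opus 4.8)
The plan is to produce a single random variable $W$ in one reduced free product and compute its distribution in two ways, one yielding $\mu\rhd\nu$ and the other $\nu\uplus(\mu\vdash\nu)$. Since $\mu,\nu\in\cbd$, I would first realize them as the distributions of selfadjoint $x_1\in(L(\M_1),\phi_{\xi_1})$ and $x_2\in(L(\M_2),\phi_{\xi_2})$, form the reduced free product $(\M,\xi)$ with embeddings $\lambda_1,\lambda_2$, and fix the index sets $\J_1=\{\emptyset,(1)\}$, $\J_2=\{(1),(2,1)\}$ (those of Definition~\ref{definition of orthogonal convolution}) together with the auxiliary sets $\J_2'=\{\emptyset,(2)\}$ and $\J_2''=\{\emptyset,(1),(2),(2,1)\}$. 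The candidate variable is $W=P^{\M}_{\J_1}\lambda_1(x_1)+P^{\M}_{\J_2'}\lambda_2(x_2)+P^{\M}_{\J_2}\lambda_2(x_2)$, exactly as displayed in the paragraph preceding the statement.

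The first computation is the easy bookkeeping step. Because the subspaces $\MC_{i_1}\otimesb\cdots\otimesb\MC_{i_m}$ indexed by distinct tuples are mutually orthogonal, the projections $P^{\M}_{\J_2'}$ and $P^{\M}_{\J_2}$ have orthogonal ranges, so $P^{\M}_{\J_2'}+P^{\M}_{\J_2}=P^{\M}_{\J_2''}$ and hence $W=P^{\M}_{\J_1}\lambda_1(x_1)+P^{\M}_{\J_2''}\lambda_2(x_2)$. The pair $\{\J_1,\J_2''\}$ is $\I$-compatible and is precisely the monotone pair of Example~\ref{Example of monotone}; therefore $W$ is the $\{\J_1,\J_2''\}$-additive convolution variable of $\mu$ and $\nu$ in the sense of Definition~\ref{definition of J-additive convolution}, so its distribution is $\mu\rhd\nu$.

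The second computation is where the real content lies. Set $\M_{\vdash}=\range\xi\oplus\MC_1\oplus\MC_2\otimesb\MC_1$ and $\M_{\rhd}=\range\xi\oplus\MC_2\oplus\MC_{\vdash}$; both are sub-$\range$-$\range$-bimodules with specified vector $\xi$ that are invariant under the three summands of $W$, so every distribution in sight may be computed inside $(L(\M_{\rhd}),\phi_\xi)$. I would then analyze the block form of the two constituents with respect to the splitting $\M_{\rhd}=(\range\xi\oplus\MC_2)\oplus\MC_{\vdash}$: using $V_1,V_2$ and the creation/annihilation decompositions of $x_1,x_2$, one checks that $A=P^{\M}_{\J_2'}\lambda_2(x_2)$ acts as $x_2$ on $\range\xi\oplus\MC_2$ and annihilates $\MC_{\vdash}$, so $A$ is the Boolean embedding of $x_2$ and has distribution $\nu$, while $B=P^{\M}_{\J_1}\lambda_1(x_1)+P^{\M}_{\J_2}\lambda_2(x_2)$ is supported on $\M_{\vdash}$, annihilates $\MC_2$, and by Definition~\ref{definition of orthogonal convolution} has distribution $\mu\vdash\nu$. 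Since $\MC_{\rhd}=\MC_2\oplus\MC_{\vdash}$ is an orthogonal decomposition of the mean-zero part of $\M_{\rhd}$, the algebras $L(\M_2)$ and $L(\M_{\vdash})$ are Boolean independent in $(L(\M_{\rhd}),\phi_\xi)$; hence $W=A+B$ has distribution $\nu\uplus(\mu\vdash\nu)$. Comparing the two computations of the distribution of $W$ gives $\mu\rhd\nu=\nu\uplus(\mu\vdash\nu)$.

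The main obstacle is precisely the second computation: one must justify that the compressed operators $A$ and $B$ are genuinely the Boolean embeddings of $x_2$ and of the orthogonal variable into $L(\M_{\rhd})$ — that is, that $A$ kills $\MC_{\vdash}$ and $B$ kills $\MC_2$ — and that the orthogonal splitting $\MC_{\rhd}=\MC_2\oplus\MC_{\vdash}$ of the complement of $\range\xi$ indeed forces Boolean independence. The first point is a direct, if slightly delicate, computation tracking where the creation parts of $x_1$ and $x_2$ send vectors of $\MC_1$, $\MC_2$ and $\MC_2\otimesb\MC_1$; the second is the standard characterization of $\range$-valued Boolean independence via an orthogonal bimodule decomposition, which I would invoke rather than reprove.
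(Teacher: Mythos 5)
Your proposal is correct and follows essentially the same route as the paper: the paper likewise forms the variable $P^{\M}_{\J_2'}\lambda_2(x_2)+P^{\M}_{\J_1}\lambda_1(x_1)+P^{\M}_{\J_2}\lambda_2(x_2)$, identifies $P^{\M}_{\J_2'}+P^{\M}_{\J_2}$ with the monotone projection, and deduces Boolean independence of $L(\M_2)$ and $L(\M_{\vdash})$ from the orthogonal splitting $\MC_{\rhd}=\MC_2\oplus\MC_{\vdash}$. The only differences are that you make explicit the support verifications for $A$ and $B$ that the paper leaves implicit, and your $\J_2''=\{\emptyset,(1),(2),(2,1)\}$ corrects the paper's typo $(1,2)$.
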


\begin{proposition}\normalfont\label{reciprocal cauchy of orthogonal} Given $\mu,\nu\in \cbd$,  for each $n\in\N$, we have
$$F_{\mu\vdash\nu,n}(b_n)=F_{\mu,n}(F_{\nu,n}(b_n))-F_{\nu,n}(b_n)+b_n,$$
for all $b_n\in \mathbb{H}_n^+.$
\end{proposition}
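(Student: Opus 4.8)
The plan is to derive the formula algebraically by combining the decomposition $\mu\rhd\nu=\nu\uplus(\mu\vdash\nu)$ of Proposition~\ref{orthogonal, boolean and monotone} with the reciprocal Cauchy transform laws already established for the Boolean and monotone convolutions. No new realization on a Hilbert module is needed if one is willing to lean on the previous three results.

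First I would observe that the identity $\mu\rhd\nu=\nu\uplus(\mu\vdash\nu)$ is an equality of $\range$-valued distributions obtained by manipulating $\J$-convolutions, and that, by the matricial compatibility of $\J$-convolutions (Proposition~\ref{Matricial convolution}), it lifts verbatim to the corresponding matricial distributions over $\range\otimes\mnc$ for every $n$. Consequently the matricial reciprocal Cauchy transforms of the two sides coincide, so that for every $n$ and every $b_n\in\mathbb{H}_n^+$ we have $F_{\mu\rhd\nu,n}(b_n)=F_{\nu\uplus(\mu\vdash\nu),n}(b_n)$.

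Next I would evaluate each side by its known transform law. By Proposition~\ref{reciprocal Cauchy Monotone additive convolution} the left-hand side equals $F_{\mu,n}(F_{\nu,n}(b_n))$, the composition being legitimate because $F_{\nu,n}$ sends $\mathbb{H}_n^+$ into $\mathbb{H}_n^+$, the domain of $F_{\mu,n}$. By Proposition~\ref{reciprocal Cauchy Boolean additive convolution} the right-hand side equals $F_{\nu,n}(b_n)+F_{\mu\vdash\nu,n}(b_n)-b_n$. Equating the two expressions and solving for $F_{\mu\vdash\nu,n}(b_n)$ gives at once
$$F_{\mu\vdash\nu,n}(b_n)=F_{\mu,n}(F_{\nu,n}(b_n))-F_{\nu,n}(b_n)+b_n.$$

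The one point genuinely requiring care—and the step I expect to be the main obstacle—is justifying that the Boolean transform law may be applied to $\nu\uplus(\mu\vdash\nu)$, since $\mu\vdash\nu$ need not a priori be the distribution of a selfadjoint element of a $C^*$-operator valued probability space (the operator $P^{\M}_{\J_1}\lambda_1(x_1)+P^{\M}_{\J_2}\lambda_2(x_2)$ realizing $\mu\vdash\nu$ is not manifestly selfadjoint). To handle this I would revisit the Section~6 construction: on $\M_{\rhd}=\range\xi\oplus\MC_1\oplus\MC_2\oplus\MC_2\otimesb\MC_1$ the subalgebras $L(\M_2)$ and $L(\M_{\vdash})$ are Boolean independent, and the operator realizing $\nu\uplus(\mu\vdash\nu)$ splits accordingly. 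The computation in the proof of Proposition~\ref{reciprocal Cauchy Boolean additive convolution}, ultimately resting on Lemma~\ref{boolean product 2} and the resolvent identity, uses only this Boolean independence and so survives the loss of selfadjointness, yielding the additive relation for $F$. Alternatively, one can sidestep the issue completely by computing $F_{\mu\vdash\nu,n}$ directly from Proposition~\ref{reciprocal Cauchy and representation} applied to $P^{\M}_{\J_1}\lambda_1(x_1)+P^{\M}_{\J_2}\lambda_2(x_2)$ on $(\M_{\vdash},\xi)$, reading off its corner decomposition and identifying the resulting $(T-b)^{-1}$ term with the monotone composition $F_{\mu,n}\circ F_{\nu,n}$.
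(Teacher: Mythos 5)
Your proof is correct and follows essentially the same route as the paper: both combine the decomposition $\mu\rhd\nu=\nu\uplus(\mu\vdash\nu)$ of Proposition~\ref{orthogonal, boolean and monotone} with the Boolean and monotone reciprocal Cauchy transform laws (Propositions~\ref{reciprocal Cauchy Boolean additive convolution} and~\ref{reciprocal Cauchy Monotone additive convolution}) and solve for $F_{\mu\vdash\nu,n}$. Your additional remarks on the matricial lifting and on why the Boolean law applies to $\mu\vdash\nu$ are careful elaborations of points the paper leaves implicit, but they do not change the argument.
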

\begin{proof}
By Proposition \ref{orthogonal, boolean and monotone}, Proposition \ref{reciprocal Cauchy Boolean additive convolution} and Proposition \ref{reciprocal Cauchy Monotone additive convolution}, for each $n\in\N$, we have
$$F_{\mu,n}(F_{\nu,n}(b_n))=F_{\mu\vdash\nu,n}(b_n)+F_{\nu,n}(b_n)-b_n,$$
for all $b_n\in \mathbb{H}_n^+.$
The proof is done.
\end{proof}

\section{Subordination convolution}
\subsection{S-free additive convolution}

In this section, we study analytic properties of the s-free additive convolutions.
For $i=1,2$, let $\mu_i,\in\cbd$, 
$(\M_i,\xi_i)$ be  a $\range$-$\range$ bimodules with  specified vectors and $x_i\in (L(\M_i),\phi_{\xi_i})$ such that $\mu_i$ is equal to the distribution of $x_i$.  Let $(\M,\xi)$ be the reduced free product of $(\M_i,\xi_i)$ and $\lambda_i:L(\M_i)\rightarrow L(\M)$ are the  embedding isomorphisms defined in Section 3. 
Let $\J_1=\{(i_1,\cdots,i_m)\in\K(\I))|i_m=1\}\cup\{\emptyset\}$ and $\J_2=\{(i_1,\cdots,i_m)\in\K(\I))|i_m=1\}$. 
Then, $P^{\M}_{\J_1}$ is the orthogonal projection onto 
$$\range\xi\oplus\MC_1\oplus\MC_2\otimesb \MC_1\oplus\MC_1\otimesb \MC_2\otimesb\MC_1\cdots$$
and 
$P^{\M}_{\J_2}$ is the orthogonal projection onto 
$$\MC_1\oplus\MC_2\otimesb \MC_1\oplus\MC_1\otimesb \MC_2\otimesb\MC_1\cdots$$
Then, the s-free additive convolution $\mu_1\br\mu_2$ of $\mu, \nu$ is equal to the distribution of $X=P^{\M}_{\J_1}\lambda_1(x_1)+P^{\M}_{\J_2}\lambda_2(x_2)$ in $(L(\M),\phi_{\xi})$.

Let $\M_{\br}=\range\xi\oplus \MC_{br}$, where
$$  \MC_{br}=\MC_1\oplus\MC_2\otimes \MC_1\oplus\MC_1\otimes \MC_2\otimes\MC_1\cdots.$$
Then, we have that 
\begin{align*}
\M=\range\xi\oplus\MC_{br}\oplus\MC_2\oplus\MC_{br}\otimes\MC_2
\end{align*}
and there exits a unitary $V:\M_{\br}\otimes\M_2\rightarrow \M$
such that 
$$\begin{array}{cl}
V:& \xi\otimesb\xi_2\rightarrow \xi,\\
V:&\xi\otimesb m_2\rightarrow m_2,\\
V:& m_1\otimesb \xi_2\rightarrow m_1,\\
V:& m_1\otimesb m_2 \rightarrow  m_1\otimesb m_2,
\end{array} $$ 
where $m_1\in \MC_\br$, $m_2\in \MC_2$.

Let $\lambda_2(x_2)|_{\M_2}$ be the restriction of $\lambda_2(x_2)$ onto $\range\xi\oplus \MC_2$. Then, the distribution of  $V(X\otimesb 1_{\M_2})V^{-1}+\lambda_2(x_2)|_{\M_2}$ is $\mu_2\rhd\mu_X=\mu_2\rhd (\mu_1\boxright \mu_2).$
\begin{lemma}
$$V(X\otimesb 1_{\M_2})V^{-1}+\lambda_2(x_2)|_{\M_2}=\lambda_1(x_1)+\lambda_2(x_2).$$
\end{lemma}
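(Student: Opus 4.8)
The plan is to verify the operator identity directly on the orthogonal decomposition
$$\M=\range\xi\oplus\MC_{\br}\oplus\MC_2\oplus\MC_{\br}\otimesb\MC_2$$
established above, by computing the action of each side on a vector lying in one of the four summands and matching the results. Writing a selfadjoint element $x_i\in L(\M_i)$ in the block form with respect to $\M_i=\range\xi_i\oplus\MC_i$, each $x_i$ decomposes as a scalar corner $p_i\in\range$ together with creation, annihilation and preservation parts, so that $\lambda_1(x_1)$ and $\lambda_2(x_2)$ act on a reduced word of $\M$ by creating, removing, or replacing its leading tensor block in the usual reduced-free-product fashion. All computations then reduce to bookkeeping of where a leading index-$1$ or index-$2$ block is created or removed, together with the explicit identifications defining $V$ and $V^{-1}$.

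First I would record two structural facts that remove the projections inside $X$. Since $P^{\M}_{\J_1}$ projects onto $\range\xi\oplus\MC_{\br}=\M_{\br}$ and $\lambda_1(x_1)$ maps $\M_{\br}$ into itself (creating, preserving or annihilating a leading $1$-block keeps a word ending in $1$, and annihilating a single $1$-block lands in $\range\xi$), one has $P^{\M}_{\J_1}\lambda_1(x_1)|_{\M_{\br}}=\lambda_1(x_1)|_{\M_{\br}}$. Since $P^{\M}_{\J_2}$ projects onto $\MC_{\br}$ and $\lambda_2(x_2)$ maps $\MC_{\br}$ into itself but sends $\range\xi$ into $\range\xi\oplus\MC_2\subseteq\M_2$, one has $P^{\M}_{\J_2}\lambda_2(x_2)|_{\MC_{\br}}=\lambda_2(x_2)|_{\MC_{\br}}$ and $P^{\M}_{\J_2}\lambda_2(x_2)|_{\range\xi}=0$. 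Hence on $\M_{\br}$ the operator $X$ simplifies to $X|_{\M_{\br}}=\lambda_1(x_1)|_{\M_{\br}}+P^{\M}_{\J_2}\lambda_2(x_2)|_{\M_{\br}}$, and the desired identity splits into an $x_1$-part and an $x_2$-part.

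For the $x_1$-part I would show $V(\lambda_1(x_1)|_{\M_{\br}}\otimesb 1_{\M_2})V^{-1}=\lambda_1(x_1)$ on all of $\M$. The point is that $x_1$ only manipulates a leading index-$1$ block, which under $V$ belongs to the prefix factor $\M_{\br}$, while the final index-$2$ block (the $\M_2$ factor) is carried along passively; comparing the four cases of the decomposition against the definition of $V$ gives the equality, with no boundary correction because $x_1$ never touches a $2$-block. For the $x_2$-part I would show $V(P^{\M}_{\J_2}\lambda_2(x_2)|_{\M_{\br}}\otimesb 1_{\M_2})V^{-1}+\lambda_2(x_2)|_{\M_2}=\lambda_2(x_2)$. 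Here the free action of $x_2$ on the leading block of a word $w=V(\pi\otimesb\phi)$ is reproduced by the prefix action $\lambda_2(x_2)|_{\M_{\br}}$ when the prefix $\pi\in\MC_{\br}$ is nontrivial (so the leading block of $w$ lies in $\pi$), and by the boundary term $\lambda_2(x_2)|_{\M_2}$ exactly when $\pi=\xi$, i.e. when $w\in\M_2$; the vanishing $P^{\M}_{\J_2}\lambda_2(x_2)|_{\range\xi}=0$ is precisely what prevents a double count in this last case.

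The main obstacle is the intertwining underlying the $x_2$-part, namely that $\lambda_2(x_2)\,V=V(\lambda_2(x_2)|_{\M_{\br}}\otimesb 1_{\M_2})$ on the subspace $\MC_{\br}\otimesb\M_2$ on which the prefix is nontrivial. This asserts that the free creation, annihilation and preservation of a leading $2$-block commute with the regrouping that collects all index-$1$-ending structure into the single factor $\M_{\br}$, leaving the final $\M_2$-block untouched; it is an associativity (or reduction) property of the reduced free product relating $V$ to the original embedding unitary $V_2$ for the index $2$, and the boundary term $\lambda_2(x_2)|_{\M_2}$ is exactly what compensates for the failure of this factorization on $\xi\otimesb\M_2$. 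Once this intertwining is established, the remaining verification is the routine term-by-term matching of creation, annihilation, preservation and scalar contributions on each of the four summands, carried out with the explicit formulas for $V$ and $V^{-1}$ recorded above.
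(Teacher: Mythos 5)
Your proposal is correct and follows essentially the same route as the paper: both verify the operator identity by direct computation on the four summands of $\M=\range\xi\oplus\MC_{\br}\oplus\MC_2\oplus\MC_{\br}\otimesb\MC_2$, using the explicit definition of $V$ and the fact that $P^{\M}_{\J_1}$, $P^{\M}_{\J_2}$ act as the identity on the images $\lambda_1(x_1)\M_{\br}$ and $\lambda_2(x_2)\MC_{\br}$ while killing $\lambda_2(x_2)\range\xi$. Your reorganization into an $x_1$-intertwining and an $x_2$-intertwining with a boundary term is a cosmetic repackaging of the paper's Cases 1--4, not a different argument.
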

\begin{proof}It suffices to show that 
$$ [V(X\otimesb 1_{\M_2})V^{-1}+\lambda_2(x_2)|_{\M_2}](v)=[\lambda_1(x_1)+\lambda_2(x_2)](v)$$
for all $v\in\M.$ 

We have the following cases:

Case 1:  $v\in\range\xi$, then $v=b\xi$ for some $b\in \range$.  We have 
\begin{align*}
V(X\otimes 1_{\M_2})V^{-1}b\xi&=V(X\otimes 1_{\M_2})(b\xi\otimes \xi_2).
\end{align*}
and 
$$
X(b\xi)=(P^{\M}_{\J_1}\lambda_1(x_1)+P^{\M}_{\J_2}\lambda_2(x_2))(b\xi)=\lambda_1(x_1)b\xi\\
$$
and 
$$\lambda_2(x_2)|_{\M_2}(b\xi)=\lambda_2(x_2)b\xi.$$
The equality holds in this case.

Case 2:  $v\in \MC_2$, then
\begin{align*}
V(X\otimes 1_{\M_2})V^{-1}v&=V(X\otimes 1_{\M_2})(\xi\otimes v)\\ 
&=V(\lambda_1(x_1)\xi\otimes v)\\ 
&=\phi_{\xi}(\lambda_1(x_1))v +[\lambda_1(x_1)\xi-\phi_{\xi}(\lambda_1(x_1))]\otimes v\\ 
&= \lambda_1(x_1)v.
\end{align*}
and 
$$\lambda_2(x_2)|_{\M_2}(v)=\lambda_2(x_2)v.$$
The equality holds in this case.

Case 3:  $v\in \MC_\br$, then
$$V(X\otimes 1_{\M_2})V^{-1}v=V(X\otimes 1_{\M_2})(v\otimes \xi_2)=Xv$$
and 
$$P^{\M}_{\J_1}v=P^{\M}_{\J_2}v=v.$$
Therefore,  $Xv=[ \lambda_1(x_1)+\lambda_2(x)]v$.
Ont the other hand, $\lambda_2(x_2)|_{\M_2}v=0$.
The equality holds in this case.

Case 3:  $v\in \MC_\br\otimes\MC_2$, then $v=v_1\otimes v_2$ where $v_1\in \MC_\br$ and $v_2\in \MC_2$.
Similarly, we have 
$$Xv_1=[ \lambda_1(x_1)+\lambda_2(x)]v_1$$
and
\begin{align*}
V(X\otimes 1_{\M_2})V^{-1}v&=V(X\otimes 1_{\M_2})(v_1\otimes v_2)\\
&=V([\lambda_1(x_1)+\lambda_2(x)](v_1)\otimes v_2)\\
&=[ \lambda_1(x_1)+\lambda_2(x)](v_1\otimes v_2)
\end{align*}
and $\lambda_2(x_2)|_{\M_2}]v=0$.
The equality holds in this case.

The proof is done.
\end{proof}

Therefore, the distribution of  $V(X\otimes 1_{\M_2})V^{-1}+\lambda_2(x_2)|_{\M_2} $ is $\mu_1\bp\mu_2$ and   we have the following result.
\begin{proposition}\label{subordination, monotone}\normalfont Let $\mu_1,\mu_2\in\cbd$ be two $\range$-valued distributions. Then, we have  
$$\mu_2\rhd (\mu_1\boxright \mu_2)=\mu_1\boxplus \mu_2. $$
\end{proposition}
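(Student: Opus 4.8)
The plan is to read off both sides of the operator identity furnished by the preceding Lemma and to recognise each as one of the two distributions appearing in the statement. The whole argument takes place in the single space $(L(\M),\phi_\xi)$ built from the reduced free product, so no further construction is required once the Lemma is in hand.

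First I would identify the right-hand side $\lambda_1(x_1)+\lambda_2(x_2)$. Since each $\lambda_i$ is the embedding isomorphism of the free product it preserves the vector state, so $\phi_\xi(\lambda_i(p(x_i)))=\phi_{\xi_i}(p(x_i))$ for every $p\in\range\langle X\rangle$; thus $\lambda_i(x_i)$ realises $\mu_i$ in $(L(\M),\phi_\xi)$. Because the family $\{\lambda_i(L(\M_i))\}_{i=1,2}$ is freely independent over $\range$ with respect to $\phi_\xi$ (recalled in Section 3), the two variables $\lambda_1(x_1)$ and $\lambda_2(x_2)$ are freely independent over $\range$, whence the distribution of their sum is precisely $\mu_1\bp\mu_2$.

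Next I would identify the left-hand side $V(X\otimesb 1_{\M_2})V^{-1}+\lambda_2(x_2)|_{\M_2}$. By construction $X$ has distribution $\mu_1\br\mu_2$, while $\lambda_2(x_2)|_{\M_2}$ carries $x_2$ with distribution $\mu_2$; the unitary $V$ together with the decomposition $\M=\range\xi\oplus\MC_{br}\oplus\MC_2\oplus\MC_{br}\otimesb\MC_2$ exhibits this sum as a monotone product, with $\MC_2$ in the role of the larger index. Hence its distribution is $\mu_2\rhd(\mu_1\br\mu_2)$, exactly as asserted in the discussion preceding the Lemma. Combining the two identifications with the Lemma's equality
$$V(X\otimesb 1_{\M_2})V^{-1}+\lambda_2(x_2)|_{\M_2}=\lambda_1(x_1)+\lambda_2(x_2),$$
and the fact that equal operators have equal distributions, yields $\mu_2\rhd(\mu_1\br\mu_2)=\mu_1\bp\mu_2$.

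The only genuine subtlety I anticipate is the middle step: confirming that $V(X\otimesb 1_{\M_2})V^{-1}$ and $\lambda_2(x_2)|_{\M_2}$ form a \emph{monotone} pair with the correct ordering, so that the combined law is $\mu_2\rhd\mu_X$ rather than $\mu_X\rhd\mu_2$. This amounts to checking, against the projection data $\J_1=\{\emptyset,(1)\}$ and $\J_2=\{\emptyset,(1),(2),(2,1)\}$ of Example \ref{Example of monotone}, that the sub-bimodule $\range\xi\oplus\MC_2$ sits inside $\M$ compatibly with those $\J$-projections; once this is established, the remaining steps are routine bookkeeping with the free-product inner product.
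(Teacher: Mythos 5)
This is essentially the paper's own proof: the paper likewise reads both distributions off the operator identity $V(X\otimesb 1_{\M_2})V^{-1}+\lambda_2(x_2)|_{\M_2}=\lambda_1(x_1)+\lambda_2(x_2)$ established in the preceding lemma, identifying the right-hand side as a free sum realising $\mu_1\bp\mu_2$ and the left-hand side as the monotone sum realising $\mu_2\rhd(\mu_1\br\mu_2)$. One small correction to your aside: in the paper's convention (Example \ref{Example of monotone}, where the index carrying the small projection set $\{\emptyset,(1)\}$ supplies the \emph{outer} function $F_{\mu_1}$ in $F_{\mu_1}\circ F_{\mu_2}$), the summand compressed to $\range\xi\oplus\MC_2$ plays the role of the \emph{smaller} index, so the phrase ``with $\MC_2$ in the role of the larger index'' reverses the ordering, although your final identification $\mu_2\rhd\mu_X$ and the compatibility check you propose to carry out are the correct ones.
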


From the previous construction,  we have the following generalization of  Proposition 2.8 in \cite{BMS}. 

\begin{proposition}\label{reciprocal cauchy of Free-Monotone}\normalfont   Let $\mu,\nu\in\cbd$.  For each $n\in \N$. we have 
$$F_{\mu\bp\nu,n}(b_n)=F_{\mu,n}(F_{\nu\br\mu,n}(b_n))$$
for all $b_n\in \mathbb{H}_n^+.$
\end{proposition}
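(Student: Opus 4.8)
The plan is to derive this identity directly from two facts already established in the excerpt, without building any new Hilbert module: the monotone subordination relation of Proposition~\ref{subordination, monotone} and the composition rule for reciprocal Cauchy transforms under monotone convolution from Proposition~\ref{reciprocal Cauchy Monotone additive convolution}. All of the analytic and combinatorial content has already been packaged into those two statements, so what remains is a short bookkeeping argument.

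First I would record that free independence is a symmetric relation, so the free additive convolution is commutative, i.e.\ $\mu\bp\nu=\nu\bp\mu$ for all $\mu,\nu\in\cbd$. Next I would apply Proposition~\ref{subordination, monotone} with the roles reversed, taking $\mu_1=\nu$ and $\mu_2=\mu$. Since that proposition asserts $\mu_2\rhd(\mu_1\br\mu_2)=\mu_1\bp\mu_2$, this specialization yields
$$\mu\rhd(\nu\br\mu)=\nu\bp\mu=\mu\bp\nu,$$
where the last equality is commutativity of $\bp$.

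Finally I would pass to matricial reciprocal Cauchy transforms. Applying Proposition~\ref{reciprocal Cauchy Monotone additive convolution} to the monotone convolution $\mu\rhd(\nu\br\mu)$, with outer argument $\mu$ and inner argument $\nu\br\mu$, gives for every $n\in\N$ and $b_n\in\mathbb{H}_n^+$
$$F_{\mu\bp\nu,n}(b_n)=F_{\mu\rhd(\nu\br\mu),n}(b_n)=F_{\mu,n}\bigl(F_{\nu\br\mu,n}(b_n)\bigr),$$
which is exactly the claimed formula.

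Since the substantive work was carried out in Proposition~\ref{subordination, monotone}, I do not expect a genuine obstacle here. The only points that require care are the bookkeeping of which distribution plays the inner and which the outer role in the monotone composition, and the explicit invocation of commutativity of $\bp$ to match $\nu\bp\mu$ with the desired $\mu\bp\nu$. I would also confirm that $\nu\br\mu$ again lies in $\cbd$, so that its matricial reciprocal Cauchy transform $F_{\nu\br\mu,n}$ is defined and the composition in Proposition~\ref{reciprocal Cauchy Monotone additive convolution} legitimately applies.
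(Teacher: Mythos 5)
Your proposal is correct and is essentially identical to the paper's own argument, which likewise deduces the identity by combining Proposition \ref{subordination, monotone} with Proposition \ref{reciprocal Cauchy Monotone additive convolution}; you merely spell out the role-swap $\mu_1=\nu$, $\mu_2=\mu$ and the commutativity of $\bp$ that the paper leaves implicit. No gap.
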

\begin{proof}
The statement directly follows  Proposition \ref{subordination, monotone} and Proposition \ref{reciprocal Cauchy Monotone additive convolution}.
\end{proof}

Since the $\range$-valued reciprocal Cauchy transform of an $\range$-valued distribution  is one to one from a neighborhood of infinity in $\mathbb{H}^+$ to a neighborhood of infinity in $\mathbb{H}^+$ \cite{Wi1}, $F_{\nu\boxright\mu,n}$ is uniquely determined by
$$ F^{\langle-1 \rangle}_{\mu,n}(F_{\mu\boxright\nu,n}(b_n))=F_{\mu,n}(F_{\nu\boxright\mu,n}(b_n)),$$
 where $b_n$ is from a neighborhood of infinity in $\mathbb{H}_n^+.$

Now $\widetilde{\J_1}=\{(i_1,\cdots,i_m)\in\K(\I))|i_m=2\}$ and $\widetilde{\J_2}=\{(i_1,\cdots,i_m)\in\K(\I))|i_m=2\}\cup\{\emptyset\}$ Then, $P^{\M}_{\widetilde{\J_1}}$ is the orthogonal projection onto 
$$\MC_2\oplus\MC_1\otimesb \MC_2\oplus\MC_2\otimesb \MC_1\otimesb\MC_2\cdots$$
and 
$P^{\M}_{\widetilde{\J_2}}$ is the orthogonal projection onto 
$$\range_\xi\oplus\MC_2\oplus\MC_1\otimesb \MC_2\oplus\MC_2\otimesb \MC_1\otimesb\MC_2\cdots$$
Then, the s-free additive convolution $\mu_2\br\mu_1$ of $\mu, \nu$ is equal to the distribution of $\widetilde{X}=P^{\M}_{\widetilde{\J_1}}\lambda_1(x_1)+P^{\M}_{\widetilde{\J_2}}\lambda_2(x_2)$ in $(L(\M),\phi_{\xi})$.

Let $\widetilde{\M_{\br}}=\range\xi\oplus \widetilde{\MC_{br}}$, where
$$  \widetilde{\MC_{br}}=\MC_2\oplus\MC_1\otimesb \MC_2\oplus\MC_2\otimesb \MC_1\otimesb\MC_2\cdots.$$
Then we have 
$$\M=\xi\oplus\widetilde{\MC_{\br}}\oplus\MC_\br.$$
Therefore, the distribution of $X+\widetilde{X}$ is the Boolean additive convolution of $(\mu_1\br\mu_2)\uplus(\mu_2\br\mu_1)$. Notice that 
$$X+\widetilde{X}=\lambda_1(x_1)+\lambda_2(x_2).$$

We have the following decomposition for the free additive convolutions.

\begin{proposition}\label{subordination, Boolean}\normalfont Let $\mu_1,\mu_2\in\cbd$ be two $\range$-valued distributions. Then, we have  
$$\mu_1\bp\mu_2=(\mu_1\br \mu_2)\uplus(\mu_2\br \mu_1). $$
\end{proposition}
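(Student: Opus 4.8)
The plan is to compute the distribution of the single operator $\lambda_1(x_1)+\lambda_2(x_2)$ on $(\M,\xi)$ in two different ways, in the same spirit as the proof of Proposition \ref{subordination, monotone}. On one hand, since $\{\lambda_i(L(\M_i))\}_{i=1,2}$ is freely independent over $\range$ with respect to $\phi_\xi$ and $\lambda_i(x_i)$ carries the distribution $\mu_i$, the distribution of $\lambda_1(x_1)+\lambda_2(x_2)$ is by definition $\mu_1\bp\mu_2$. On the other hand, I will show that this same operator equals $X+\widetilde X$, and that $X,\widetilde X$ are Boolean independent with distributions $\mu_1\br\mu_2$ and $\mu_2\br\mu_1$, which forces the distribution of $X+\widetilde X$ to be $(\mu_1\br\mu_2)\uplus(\mu_2\br\mu_1)$. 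Comparing the two computations yields the claim.

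First I would record the identity $X+\widetilde X=\lambda_1(x_1)+\lambda_2(x_2)$. Grouping the four projected summands,
$$X+\widetilde X=\big(P^{\M}_{\J_1}+P^{\M}_{\widetilde{\J_1}}\big)\lambda_1(x_1)+\big(P^{\M}_{\J_2}+P^{\M}_{\widetilde{\J_2}}\big)\lambda_2(x_2).$$
Every reduced word in $\K(\I)$ ends either in $1$ or in $2$, so $\J_1\sqcup\widetilde{\J_1}=\K(\I)$ and $\J_2\sqcup\widetilde{\J_2}=\K(\I)$ as disjoint unions that account for $\emptyset$ exactly once. Hence $P^{\M}_{\J_1}+P^{\M}_{\widetilde{\J_1}}=P^{\M}_{\J_2}+P^{\M}_{\widetilde{\J_2}}=1_{\M}$, and the identity follows.

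The substantial step is the Boolean independence of $X$ and $\widetilde X$, realized through the orthogonal decomposition $\M=\range\xi\oplus\MC_{br}\oplus\widetilde{\MC_{br}}$. The aim is to identify $(\M,\xi)$ with the Boolean product of $(\M_{\br},\xi)$ and $(\widetilde{\M_{\br}},\xi)$, under which $X$ and $\widetilde X$ become the canonical Boolean copies of the variables realizing $\mu_1\br\mu_2$ and $\mu_2\br\mu_1$ (compare Example \ref{Example of boolean}). For this I must verify that $X$ preserves $\M_{\br}=\range\xi\oplus\MC_{br}$ and annihilates $\widetilde{\MC_{br}}$, and symmetrically that $\widetilde X$ preserves $\widetilde{\M_{\br}}$ and annihilates $\MC_{br}$. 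The containment of the ranges is immediate from the targets of the projections $P^{\M}_{\J_1},P^{\M}_{\J_2}$. For the annihilation, note that $\lambda_1(x_1)$ and $\lambda_2(x_2)$ act on the leftmost letter of a reduced word and therefore preserve its rightmost letter; hence for $w\in\widetilde{\MC_{br}}$, a word ending in $2$, both $\lambda_1(x_1)w$ and $\lambda_2(x_2)w$ are supported on words ending in $2$ together with at most a $\range\xi$-component. Since $P^{\M}_{\J_1}$ retains only $\range\xi$ and words ending in $1$ while $P^{\M}_{\J_2}$ retains only words ending in $1$, and since the $\range\xi$-component of $\lambda_1(x_1)w$ vanishes because $\lambda_1(x_1)\xi\in\range\xi\oplus\MC_1$ is orthogonal to $w$, both projected terms vanish, giving $Xw=0$; the mirror computation gives $\widetilde X|_{\MC_{br}}=0$.

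Granting these support conditions, I would write $X$ and $\widetilde X$ as $3\times3$ block matrices along $\range\xi\oplus\MC_{br}\oplus\widetilde{\MC_{br}}$: any constant-free polynomial in $X$ is then supported on the $\range\xi\oplus\MC_{br}$ corner, any constant-free polynomial in $\widetilde X$ on the $\range\xi\oplus\widetilde{\MC_{br}}$ corner, and $\phi_\xi$ reads off the top-left entry. A direct block multiplication shows that the top-left entry of an alternating product of such polynomials factorizes as the product of the individual top-left entries, which is exactly the defining relation of Boolean independence over $\range$. Consequently $X+\widetilde X$ has distribution $(\mu_1\br\mu_2)\uplus(\mu_2\br\mu_1)$, and combined with the freeness computation this gives $\mu_1\bp\mu_2=(\mu_1\br\mu_2)\uplus(\mu_2\br\mu_1)$. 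I expect the main obstacle to be the two annihilation identities $X|_{\widetilde{\MC_{br}}}=0$ and $\widetilde X|_{\MC_{br}}=0$, everything else being formal; the delicate point there is controlling the spurious $\range\xi$-component that $\lambda_2(x_2)$ produces on length-one words, which is disposed of precisely because $\J_2$ omits $\emptyset$.
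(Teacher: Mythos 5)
Your proof is correct and takes essentially the same route as the paper: the paper likewise decomposes $\M=\range\xi\oplus\MC_{br}\oplus\widetilde{\MC_{br}}$, observes that $X$ and $\widetilde X$ are Boolean independent with distributions $\mu_1\br\mu_2$ and $\mu_2\br\mu_1$, and notes $X+\widetilde X=\lambda_1(x_1)+\lambda_2(x_2)$. You simply supply the verifications (the projections summing to the identity, and the annihilation identities $X|_{\widetilde{\MC_{br}}}=0$, $\widetilde X|_{\MC_{br}}=0$) that the paper leaves implicit.
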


\begin{proposition}\label{reciprocal cauchy of free-Boolean}\normalfont Let $\mu,\nu\in\cbd$.  For each $n\in \N$, we have 
$$F_{\mu\bp\nu,n}(b_n)=F_{\nu\br\mu,n}(b_n)+F_{\mu\br\nu,n}(b_n)-b_n$$
for all $b_n\in \mathbb{H}_n^+.$
\end{proposition}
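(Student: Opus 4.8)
The plan is to obtain this identity as an immediate consequence of the decomposition Proposition \ref{subordination, Boolean} together with the matricial formula for the Boolean additive convolution recorded in Proposition \ref{reciprocal Cauchy Boolean additive convolution}. Indeed, the structural content has already been extracted: Proposition \ref{subordination, Boolean} expresses the free additive convolution as a Boolean convolution of the two s-free convolutions, and the reciprocal Cauchy transform of a Boolean convolution is additive (up to the correction term $-b_n$) at every matricial level. So the proof is parallel to that of Proposition \ref{reciprocal cauchy of Free-Monotone}, which simply cited the appropriate structural and analytic propositions.

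Concretely, first I would apply Proposition \ref{subordination, Boolean} with $\mu_1=\mu$ and $\mu_2=\nu$, which gives the distributional identity $\mu\bp\nu=(\mu\br\nu)\uplus(\nu\br\mu)$. Since both $\br$ and $\uplus$ are instances of the $\J$-additive convolution (see Definitions \ref{definition of s-free convolution} and the Boolean example), this identity is compatible with the matricial amplification via Proposition \ref{Matricial convolution}, so it may be read off at every level $n$. Then I would apply Proposition \ref{reciprocal Cauchy Boolean additive convolution} to the right-hand side, which yields
$$F_{\mu\bp\nu,n}(b_n)=F_{(\mu\br\nu)\uplus(\nu\br\mu),n}(b_n)=F_{\mu\br\nu,n}(b_n)+F_{\nu\br\mu,n}(b_n)-b_n$$
for all $b_n\in\mathbb{H}_n^+$. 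By commutativity of addition in $\range\otimes\mnc$ this is precisely the asserted formula.

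I do not expect a genuine obstacle here, since all the nontrivial work sits in Proposition \ref{subordination, Boolean} (the Hilbert-bimodule computation identifying $X+\widetilde{X}$ with $\lambda_1(x_1)+\lambda_2(x_2)$) and in the already-established Boolean transform law. The only point that warrants a word of care is the passage from the $\range$-valued distributional identity to its statement at all matricial levels $n$; but this is automatic, because Proposition \ref{reciprocal Cauchy Boolean additive convolution} is itself stated for every $n$ and the s-free convolution, being a $\J$-convolution, commutes with the matricial extension by Proposition \ref{matricial extension} and Proposition \ref{Matricial convolution}. Accordingly the proof can be written in two lines, citing Proposition \ref{subordination, Boolean} and Proposition \ref{reciprocal Cauchy Boolean additive convolution}.
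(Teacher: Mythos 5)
Your proof is correct and is essentially identical to the paper's, which likewise deduces the identity directly from Proposition \ref{subordination, Boolean} and Proposition \ref{reciprocal Cauchy Boolean additive convolution}. The extra remark about the matricial levels being handled by Proposition \ref{Matricial convolution} is a reasonable elaboration of the same argument, not a different route.
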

\begin{proof}
The statement directly follows  Proposition \ref{subordination, Boolean} and Proposition \ref{reciprocal Cauchy Boolean additive convolution}.
\end{proof}

\subsection{S-free additive convolution and operator-valued free convolution powers}
In this subsection we study relations between s-free additive convolution and the subordination functions of the  operator valued free convolution powers defined in \cite{ABFN}.
Let $\mu\in\cbd$ and $\alpha$ be a completely positive map such that $\eta=\alpha-1$ is also completely positive.  By Theorem 8.4 in \cite{ABFN} and the matricial extension property of the free convolution, for each $n\in\N$, we have that there is a unique solution $\omega_{\alpha,\mu,n}$ to  the functional equation
\begin{equation}\label{operator power subordination} \omega_{\alpha,\mu,n}(b_n)=b_1+(\alpha_n-1_n)h_{\mu,n}(\omega_{\alpha,\mu,n}(b_n)),\quad b_n\in\mathbb{H}_n^+,
\end{equation} 
where $h_{\mu,n}(b_n)=F_{\mu,n}(b_n)-b_n$.  

\begin{proposition}\normalfont\label{reciprocal Cauchy transform of  s-free convolution powers}
  Let $\mu\in\cbd$. For each $n\in \N$,  we have
  $$\omega_{\alpha,\mu,n}(b_n)=F_{{\mu^{\uplus\eta}\boxright \mu^{\uplus\eta}}}(b_n).$$
\end{proposition}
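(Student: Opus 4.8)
The plan is to show that the s-free transform $F_{\mu^{\uplus\eta}\br\mu^{\uplus\eta},n}$ satisfies exactly the functional equation (\ref{operator power subordination}) that uniquely characterizes $\omega_{\alpha,\mu,n}$, and then to conclude by uniqueness of the solution. Write $\rho=\mu^{\uplus\eta}$ (which lies in $\cbd$ since $\eta$ is completely positive) and put $\omega'=F_{\rho\br\rho,n}$. Since $\eta=\alpha-1$, the completely positive map occurring in (\ref{operator power subordination}) is precisely $\alpha_n-1_n=\eta_n$, so the target equation reads $\omega_{\alpha,\mu,n}(b_n)=b_n+\eta_n\bigl(h_{\mu,n}(\omega_{\alpha,\mu,n}(b_n))\bigr)$. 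It therefore suffices to verify that $\omega'$ obeys this same relation on $\mathbb{H}_n^+$.

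The first step is to compute $h_{\rho,n}$ in terms of $h_{\mu,n}$. Applying Theorem \ref{reciprocal Cauchy transform of Boolean operator power convolution} with the completely positive map $\eta$ in place of $\alpha$ gives $F_{\rho,n}(b_n)=\eta_n(F_{\mu,n}(b_n))+(1_n-\eta_n)(b_n)$, and hence, using the $\C$-linearity of $\eta_n$,
$$h_{\rho,n}(b_n)=F_{\rho,n}(b_n)-b_n=\eta_n\bigl(F_{\mu,n}(b_n)-b_n\bigr)=\eta_n\bigl(h_{\mu,n}(b_n)\bigr).$$
Thus $h_{\rho,n}=\eta_n\circ h_{\mu,n}$.

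The second step is to extract the fixed-point equation for $\omega'$. Specializing Proposition \ref{reciprocal cauchy of Free-Monotone} and Proposition \ref{reciprocal cauchy of free-Boolean} to the diagonal case $\mu=\nu=\rho$, where the two subordination functions $F_{\nu\br\mu,n}$ and $F_{\mu\br\nu,n}$ both collapse to the single transform $\omega'=F_{\rho\br\rho,n}$, yields the two identities $F_{\rho\bp\rho,n}(b_n)=F_{\rho,n}(\omega'(b_n))$ and $F_{\rho\bp\rho,n}(b_n)=2\,\omega'(b_n)-b_n$. Equating these and writing $F_{\rho,n}(x)=x+h_{\rho,n}(x)$ gives $\omega'(b_n)=b_n+h_{\rho,n}(\omega'(b_n))$; substituting $h_{\rho,n}=\eta_n\circ h_{\mu,n}$ from the first step produces $\omega'(b_n)=b_n+\eta_n\bigl(h_{\mu,n}(\omega'(b_n))\bigr)$, which is exactly (\ref{operator power subordination}).

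Since (\ref{operator power subordination}) has a unique solution on $\mathbb{H}_n^+$ (as recorded in the statement, following Theorem 8.4 of \cite{ABFN}), we conclude $\omega'=\omega_{\alpha,\mu,n}$, i.e. $\omega_{\alpha,\mu,n}(b_n)=F_{\mu^{\uplus\eta}\br\mu^{\uplus\eta},n}(b_n)$, as claimed. The step I expect to require the most care is the passage to the fixed-point equation for $\omega'$: one must be sure that the two subordination functions of $\rho\bp\rho$ genuinely coincide (immediate here because both arguments equal $\rho$) and that the resulting identity holds on all of $\mathbb{H}_n^+$ rather than merely near infinity, which is guaranteed because Propositions \ref{reciprocal cauchy of Free-Monotone} and \ref{reciprocal cauchy of free-Boolean} are valid throughout $\mathbb{H}_n^+$; the computation $h_{\rho,n}=\eta_n\circ h_{\mu,n}$ is then routine.
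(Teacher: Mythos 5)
Your proof is correct and follows essentially the same route as the paper: both verify that $F_{\mu^{\uplus\eta}\br\mu^{\uplus\eta},n}$ satisfies the defining fixed-point equation by combining Theorem \ref{reciprocal Cauchy transform of Boolean operator power convolution} with the two subordination decompositions (Propositions \ref{reciprocal cauchy of Free-Monotone} and \ref{reciprocal cauchy of free-Boolean}) specialized to $\rho=\mu^{\uplus\eta}$, and then invoke uniqueness. The only cosmetic difference is that you work directly with Equation (\ref{operator power subordination}) via the identity $h_{\rho,n}=\eta_n\circ h_{\mu,n}$, whereas the paper verifies the algebraically equivalent rearranged form (\ref{operator power subordination 2}).
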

\begin{proof}
It suffices to show  that $F_{{\mu^{\uplus\eta}\boxright \mu^{\uplus\eta}}}$ satisfies Equation (\ref{operator power subordination}).  By Equation (\ref{operator power subordination}), we have 
\begin{equation}\label{operator power subordination 2} 
\alpha_n(\omega_{\alpha,\mu,n}(b_n))=b_n+(\alpha_n-1_n)F_\mu(\omega_{\alpha,\mu,n}(b_n)),\quad b_n\in\mathbb{H}_n^+.
\end{equation} 

By Proposition \ref{reciprocal cauchy of free-Boolean}, we have 
\begin{equation}
2F_{{\mu^{\uplus\eta}\boxright \mu^{\uplus\eta},n}}(b_n)-b_n=F_{\mu^{\uplus\eta},n}(F_{{\mu^{\uplus\eta}\boxright \mu^{\uplus\eta},n}}(b_n)),\quad b_n\in\mathbb{H}^+.
\end{equation} 

Then, by Theorem \ref{reciprocal Cauchy transform of Boolean operator power convolution}, we have 
\begin{equation}
2F_{{\mu^{\uplus\eta}\boxright \mu^{\uplus\eta},n}}(b_n)-b_n=\eta_n[F_{\mu,n}(F_{{\mu^{\uplus\eta}\boxright \mu^{\uplus\eta},n}}(b_n))]+(1_n-\eta_n)F_{{\mu^{\uplus\eta}\boxright \mu^{\uplus\eta},n}}(b_n),\quad b_n\in\mathbb{H}^+.
\end{equation} 
Therefore, we have 
$$ (1+\eta)F_{{\mu^{\uplus\eta}\boxright \mu^{\uplus\eta},n}}(b_n)-b_n=\eta_n[F_{\mu,n}(F_{{\mu^{\uplus\eta}\boxright \mu^{\uplus\eta},n}}(b_n))].$$
Notice that $1_n+\eta_n=\alpha_n$,  $F_{{\mu^{\uplus\eta}\boxright \mu^{\uplus\eta},n}}$ is a solution to Equation \ref{operator power subordination 2}. 
It follows that $\omega_{\alpha,\mu,n}=F_{{\mu^{\uplus\eta}\boxright \mu^{\uplus\eta},n}}$ on $\mathbb{H}^+_n.$
\end{proof}

\subsection{Matricial R-transform and the free additive s-free additive convolution}  In this subsection, we prove an operator valued version of Relations (\ref{scalar free addtive and subordination}) and (\ref{scalar free powers and subordination}). 
\begin{lemma}\normalfont Given  $\mu,\nu\in \cbd$, let $\mu\br\nu$ be the $s$-free additive convolution of $\mu,\nu$ and let  $(R_{\mu,n})_{n\geq 1}$ and $(R_{\mu\br\nu,n})_{n\geq 1}$ be the matricial R-transform of $\mu$ and $\mu\br\nu$, respectively. Then
 for each $n\in\N$, we have that 
$$  R_{\mu\br\nu,n}(b_n^{-1})=R_{\mu,n}(G_{\nu,n}(b_n)), $$
where $b_n$ is from a neighborhood of infinity in $\mathbb{H}_n^+.$ 
\end{lemma}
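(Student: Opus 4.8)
The plan is to translate the claimed identity, which is phrased through the matricial $R$-transforms, into an equivalent statement about reciprocal Cauchy transforms and their compositional inverses, and then to read that statement off from the two subordination relations already established. Recall from the definitions that $\phi_{\mu,n}(b)=F_{\mu,n}^{\langle-1\rangle}(b)-b$ and $R_{\mu,n}(b^{-1})=\phi_{\mu,n}(b)$, so that $R_{\mu,n}(c)=F_{\mu,n}^{\langle-1\rangle}(c^{-1})-c^{-1}$ on a uniform neighborhood of $0$. Applying this to the left side gives $R_{\mu\br\nu,n}(b_n^{-1})=F_{\mu\br\nu,n}^{\langle-1\rangle}(b_n)-b_n$, while on the right side, using $F_{\nu,n}(b_n)=G_{\nu,n}(b_n)^{-1}$, I obtain $R_{\mu,n}(G_{\nu,n}(b_n))=F_{\mu,n}^{\langle-1\rangle}(F_{\nu,n}(b_n))-F_{\nu,n}(b_n)$. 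Thus it suffices to prove
\begin{equation}
F_{\mu\br\nu,n}^{\langle-1\rangle}(b_n)-b_n=F_{\mu,n}^{\langle-1\rangle}(F_{\nu,n}(b_n))-F_{\nu,n}(b_n)
\end{equation}
for $b_n$ in a neighborhood of infinity in $\mathbb{H}_n^+$.

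First I would parametrize by writing $b_n=F_{\mu\br\nu,n}(c_n)$; this is legitimate because $F_{\mu\br\nu,n}$ is one-to-one from a neighborhood of infinity onto a neighborhood of infinity, so $c_n=F_{\mu\br\nu,n}^{\langle-1\rangle}(b_n)$ and the left side of the reduced equation becomes $c_n-F_{\mu\br\nu,n}(c_n)$. Next I would simplify $F_{\nu,n}(b_n)=F_{\nu,n}(F_{\mu\br\nu,n}(c_n))$. By commutativity of $\bp$ together with Proposition \ref{reciprocal cauchy of Free-Monotone} applied to the pair $(\nu,\mu)$, this equals $F_{\mu\bp\nu,n}(c_n)$. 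Feeding the result into $F_{\mu,n}^{\langle-1\rangle}$ and invoking Proposition \ref{reciprocal cauchy of Free-Monotone} in its stated form $F_{\mu\bp\nu,n}(c_n)=F_{\mu,n}(F_{\nu\br\mu,n}(c_n))$, the left-inverse property collapses the composition to $F_{\nu\br\mu,n}(c_n)$. Hence the right side of the reduced equation equals $F_{\nu\br\mu,n}(c_n)-F_{\mu\bp\nu,n}(c_n)$.

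It then remains to verify $c_n-F_{\mu\br\nu,n}(c_n)=F_{\nu\br\mu,n}(c_n)-F_{\mu\bp\nu,n}(c_n)$, which after rearrangement is exactly the identity $F_{\mu\bp\nu,n}(c_n)=F_{\nu\br\mu,n}(c_n)+F_{\mu\br\nu,n}(c_n)-c_n$ of Proposition \ref{reciprocal cauchy of free-Boolean}. This closes the argument, so no analytic input beyond the two previously established subordination relations is needed.

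I anticipate that the only real care required is the bookkeeping: tracking which variable is being inverted, confirming that the left inverses act as genuine inverses on the relevant neighborhoods of infinity (so that both the substitution $b_n=F_{\mu\br\nu,n}(c_n)$ and the cancellation $F_{\mu,n}^{\langle-1\rangle}\circ F_{\mu,n}=\mathrm{id}$ are valid when applied to $F_{\nu\br\mu,n}(c_n)$, which lies near infinity since reciprocal Cauchy transforms preserve neighborhoods of infinity), and invoking the correct orientation of Proposition \ref{reciprocal cauchy of Free-Monotone} — once as $F_{\mu\bp\nu,n}=F_{\mu,n}\circ F_{\nu\br\mu,n}$ and once in its $\mu\leftrightarrow\nu$ counterpart $F_{\mu\bp\nu,n}=F_{\nu,n}\circ F_{\mu\br\nu,n}$.
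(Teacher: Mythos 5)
Your proof is correct and follows essentially the same route as the paper's: both arguments reduce the claim to the identity $F^{\langle-1\rangle}_{\mu\br\nu,n}(b_n)-b_n=F^{\langle-1\rangle}_{\mu,n}(F_{\nu,n}(b_n))-F_{\nu,n}(b_n)$ via the Voiculescu transform, then verify it by combining Proposition \ref{reciprocal cauchy of Free-Monotone} (in both orientations) with Proposition \ref{reciprocal cauchy of free-Boolean} after the reparametrization $b_n=F_{\mu\br\nu,n}(c_n)$. The paper merely runs the same computation in the forward direction, starting from $R_{\mu\br\nu,n}(G_{\mu\br\nu,n}(c_n))$.
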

\begin{proof}
Recall that $R_{\mu,n}(b_n)=G^{\langle-1\rangle}_{\mu,n}(b_n)- b_n^{-1}$ for  $b_n\in \mathbb{H}^-_n$ such that $\|b_n\|$ is small enough. Let $b_n\in\mathbb{H}^+_n$, then $G_{\mu\br\nu}(b_n)\in \mathbb{H}_n^-$ and we have that 
\begin{align*}
R_{\mu\br\nu,n}(G_{\mu\br \nu,n}(b_n))&=b_n-(G_{\mu\br \nu,n}(b_n))^{-1}\\
&=b_n-F_{\mu\br \nu,n}(b_n).
\end{align*}

By Proposition \ref{reciprocal cauchy of free-Boolean},  if $\|b_n^{-1}\|$ is small enough, we have 
\begin{align*}
R_{\mu\br\nu,n}(G_{\mu\br \nu,n}(b_n))&=b_n-F_{\mu\br \nu,n}(b_n)\\
&=F_{\nu\br \mu,n}(b_n)-F_{\nu\bp \mu,n}(b_n)\\
&=F^{\langle -1\rangle}_{\mu,n}[F_{\nu\bp\mu,n}(b_n)]-F_{\nu\bp \mu,n}(b_n).
\end{align*}

Notice that $F^{\langle -1\rangle}_{\mu,n}[F_{\nu\br \mu,n}(b_n)]-F_{\nu\br \mu,n}(b_n)=\phi_{\mu,n}(F_{\nu\br \mu,n}(b_n))$. where $\phi_{\mu,n}$ is the Voiculescu transform  and $\phi_{\mu,n}(b)=R_{\mu,n}(b^{-1}) $.

 It follows that 
\begin{equation}
R_{\mu\br\nu,n}(G_{\mu\br \nu,n}(b_n))=R_{\mu,n}(G_{\mu\bp \nu,n}(b_n)).
\end{equation}

By Proposition \ref{reciprocal cauchy of Free-Monotone}, we have that 
\begin{align*}
G_{\mu\bp \nu,n}(b_n)&=[F_{\mu\bp \nu,n}(b_n)]^{-1}\\&
= [F_{\nu,n}(F_{\mu\br \nu,n}(b_n))]^{-1}\\
=G_{\nu,n}(F_{\mu\br \nu,n}(b_n)).
\end{align*}

Therefore, we have that 
$$R_{\mu\br\nu,n}(G_{\mu\br \nu,n}(b_n))=R_{\mu,n}[G_{\nu,n}(F_{\mu\br \nu,n}(b_n)).]$$

Since the range of $G_{\mu\br \nu,n}$ contains a neighborhood of $0$ in $\mathbb{H}^-_n.$
The proof is done.
\end{proof}

By the linearization property of the R-transform, we have the following result.
\begin{proposition}\label{R-transform and subordination}\normalfont
Let $\mu_1,\mu_2,\mu,\nu\in \cbd$ and $\alpha$ is a completely positive map from $\range$ to $\range$. Then
$$(\mu_1\bp\mu_2)\br\nu=(\mu_1\br\nu)\bp(\mu_2\br\nu).$$ 
If $\mu^{\bp \alpha}\in\cbd$, then 
$$(\mu\br\nu)^{\bp \alpha}\in\cbd$$ and 
$$(\mu\br\nu)^{\bp \alpha}= \mu^{\bp \alpha}\br\nu.$$
\end{proposition}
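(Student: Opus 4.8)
The plan is to derive both identities from the previously established lemma
$$R_{\mu\br\nu,n}(b_n^{-1})=R_{\mu,n}(G_{\nu,n}(b_n))$$
together with the linearization property of the matricial R-transform, namely $R_{\mu\bp\nu,n}=R_{\mu,n}+R_{\nu,n}$. The guiding observation is that the right-hand side of the lemma depends on $\mu$ only through $R_{\mu,n}$, and does so \emph{linearly}: it sends $R_{\mu,n}$ to the function $b_n\mapsto R_{\mu,n}(G_{\nu,n}(b_n))$, and composition with $G_{\nu,n}$ is applied on the argument side, so additivity in the subscript $\mu$ is preserved.

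For the first identity I would proceed as follows. Fix $n\in\N$ and work on a neighborhood of infinity in $\mathbb{H}_n^+$, where all the transforms below are defined. Writing the lemma for $\mu_1\bp\mu_2$ in place of $\mu$ gives
$$R_{(\mu_1\bp\mu_2)\br\nu,n}(b_n^{-1})=R_{\mu_1\bp\mu_2,n}(G_{\nu,n}(b_n))=R_{\mu_1,n}(G_{\nu,n}(b_n))+R_{\mu_2,n}(G_{\nu,n}(b_n)),$$
using linearization in the last step. By the lemma applied to $\mu_1$ and to $\mu_2$ separately, the two summands are exactly $R_{\mu_1\br\nu,n}(b_n^{-1})$ and $R_{\mu_2\br\nu,n}(b_n^{-1})$, so
$$R_{(\mu_1\bp\mu_2)\br\nu,n}(b_n^{-1})=R_{\mu_1\br\nu,n}(b_n^{-1})+R_{\mu_2\br\nu,n}(b_n^{-1})=R_{(\mu_1\br\nu)\bp(\mu_2\br\nu),n}(b_n^{-1}).$$
Since these R-transforms agree on a neighborhood of infinity for every $n$, the matricial reciprocal Cauchy transforms agree, and by the uniqueness statement (equality of $F_\cdot$ on $(\mathbb{H}_n^+)_{n\geq1}$ forces equality of distributions) the two $\range$-valued distributions coincide.

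The second identity has the same shape but requires handling the completely positive exponent $\alpha$. Recall that the free convolution power $\mu^{\bp\alpha}$ is characterized at the R-transform level by $R_{\mu^{\bp\alpha},n}=\alpha_n(R_{\mu,n})$, where $\alpha_n=\alpha\otimes I_n$ acts on the values in $\range\otimes\mnc$. Applying the lemma to $\mu^{\bp\alpha}$ and using this characterization gives $R_{\mu^{\bp\alpha}\br\nu,n}(b_n^{-1})=\alpha_n(R_{\mu,n}(G_{\nu,n}(b_n)))=\alpha_n(R_{\mu\br\nu,n}(b_n^{-1}))$, which is precisely the defining relation for $(\mu\br\nu)^{\bp\alpha}$; hence $\mu^{\bp\alpha}\br\nu=(\mu\br\nu)^{\bp\alpha}$, and in particular the latter lies in $\cbd$ because it is realized as an honest s-free convolution of elements of $\cbd$. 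The main obstacle I anticipate is not the algebra but the \emph{domain bookkeeping}: the R-transforms are only defined on a uniform neighborhood of $0$ (equivalently the Voiculescu transforms near infinity), so I must check that $G_{\nu,n}$ maps the relevant neighborhood of infinity into the domain where $R_{\mu,n}$ is defined, and that $\alpha_n$ preserves these estimates uniformly in $n$, so that the matricial identities patch together into a genuine equality of fully matricial functions. One must also confirm that $\mu^{\bp\alpha}\in\cbd$ is exactly the hypothesis needed to guarantee the realizability used when invoking the lemma for $\mu^{\bp\alpha}$; this is why the statement carries the assumption $\mu^{\bp\alpha}\in\cbd$ explicitly.
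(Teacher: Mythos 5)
Your proposal is correct and follows exactly the route the paper intends: the paper's entire proof is the sentence preceding the proposition ("By the linearization property of the R-transform"), relying on the lemma $R_{\mu\br\nu,n}(b_n^{-1})=R_{\mu,n}(G_{\nu,n}(b_n))$ together with $R_{\mu\bp\nu,n}=R_{\mu,n}+R_{\nu,n}$ and $R_{\mu^{\bp\alpha},n}=\alpha_n(R_{\mu,n})$, which is precisely what you spelled out. Your additional remarks on domain bookkeeping and on why $\mu^{\bp\alpha}\in\cbd$ yields $(\mu\br\nu)^{\bp\alpha}\in\cbd$ only make explicit what the paper leaves implicit.
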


\section{More relations between convolutions and transforms in operator-valued free probability}
In this section, we use the properties in previous sections to study more relations between convolutions and transforms in operator-valued free probability. The main fact we use in the section is that the matricial reciprocal Cauchy transforms,  the matricial R transforms, Voiculescu's transform can completely determine $\range$-valued distributions. We will denote by $\cp$ the family of completely positive maps from $\range$ to $\range$. Given $s\in\cp$, for each $n\in \N$, $s_n$ denotes the completely map from $\range\otimes\mnc$  to $\range\otimes\mnc$ defined as 
$$s_n(b\otimes e(i,j))=s(b)\otimes e,$$
where $b\in\range$ and $e\in\mnc$.

\begin{proposition} Let  $\mu,\nu\in\cbd$. Then
$$\mu\vdash(\nu\br\mu)=\mu\br\nu.$$
\end{proposition}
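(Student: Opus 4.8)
The plan is to verify the asserted identity at the level of matricial reciprocal Cauchy transforms and then invoke the injectivity statement recorded earlier (the fully matricial proposition that $\mu=\nu$ in $\cbd$ precisely when $F_\mu=F_\nu$ on $(\mathbb{H}^+_n)_{n\geq 1}$). So I would fix $n\in\N$ and $b_n\in\mathbb{H}^+_n$ and simply compute $F_{\mu\vdash(\nu\br\mu),n}(b_n)$, aiming to land on $F_{\mu\br\nu,n}(b_n)$. All the needed formulas are already in place: the orthogonal formula of Proposition \ref{reciprocal cauchy of orthogonal}, the free--monotone subordination of Proposition \ref{reciprocal cauchy of Free-Monotone}, and the free--Boolean decomposition of Proposition \ref{reciprocal cauchy of free-Boolean}, each valid on all of $\mathbb{H}^+_n$, so no neighborhood-of-infinity bookkeeping is required.

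The first step is to expand the left-hand side with the orthogonal formula, applied with first entry $\mu$ and second entry $\nu\br\mu$:
\begin{equation*}
F_{\mu\vdash(\nu\br\mu),n}(b_n)=F_{\mu,n}\bigl(F_{\nu\br\mu,n}(b_n)\bigr)-F_{\nu\br\mu,n}(b_n)+b_n.
\end{equation*}
By Proposition \ref{reciprocal cauchy of Free-Monotone} the composition $F_{\mu,n}(F_{\nu\br\mu,n}(b_n))$ is exactly $F_{\mu\bp\nu,n}(b_n)$, so the right-hand side becomes $F_{\mu\bp\nu,n}(b_n)-F_{\nu\br\mu,n}(b_n)+b_n$. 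Substituting the free--Boolean identity $F_{\mu\bp\nu,n}(b_n)=F_{\nu\br\mu,n}(b_n)+F_{\mu\br\nu,n}(b_n)-b_n$ from Proposition \ref{reciprocal cauchy of free-Boolean}, the terms $F_{\nu\br\mu,n}(b_n)$ cancel and the two copies of $b_n$ cancel, leaving
\begin{equation*}
F_{\mu\vdash(\nu\br\mu),n}(b_n)=F_{\mu\br\nu,n}(b_n).
\end{equation*}
Since this holds for every $n$ and every $b_n\in\mathbb{H}^+_n$, the injectivity of the matricial reciprocal Cauchy transform yields $\mu\vdash(\nu\br\mu)=\mu\br\nu$.

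There is no genuinely hard step here; the content is entirely a matter of chaining the three subordination formulas in the correct order. The one point I would be careful about is keeping the non-commutativity of $\vdash$ and $\br$ straight, namely making sure that in the orthogonal formula the roles are $\mu$ on the outside and $\nu\br\mu$ on the inside (matching Definition \ref{definition of orthogonal convolution}), and that the free--monotone and free--Boolean relations are quoted with the arguments in the orientation actually proved in Propositions \ref{reciprocal cauchy of Free-Monotone} and \ref{reciprocal cauchy of free-Boolean}. Once the bookkeeping of these asymmetric convolutions is fixed, the cancellation is immediate and the conclusion follows at once.
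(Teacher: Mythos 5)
Your proof is correct and follows exactly the paper's own argument: expand $F_{\mu\vdash(\nu\br\mu),n}$ via the orthogonal formula, identify the composition $F_{\mu,n}\circ F_{\nu\br\mu,n}$ with $F_{\mu\bp\nu,n}$ by the free--monotone subordination, and cancel using the free--Boolean decomposition, concluding by injectivity of the matricial reciprocal Cauchy transform. Nothing further is needed.
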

\begin{proof}
For each $n\in \N$, $b_n\in\mathbb{H}_n^+$, by Proposition \ref{reciprocal cauchy of orthogonal}, \ref{reciprocal cauchy of Free-Monotone},\ref{reciprocal cauchy of free-Boolean}, we have 
\begin{align*}
F_{\mu\vdash(\nu\br\mu),n}(b_n)&=F_{\mu,n}(F_{\nu\br\mu,n}(b_n))-F_{\nu\br\mu,n}(b_n)+b_n\\
&=F_{\nu\bp\mu,n}(b_n)-F_{\nu\br\mu,n}(b_n)+b_n\\
&=F_{\mu\br\nu,n}(b_n).\\
\end{align*}
The proof is done.
\end{proof}

Recall that a $\range$-valued Boolean central limit law or say $\range$-valued Bernoulli law is completely determine by its variance which is a completely positive map $s\in\cp$ \cite{BPV}. If we denote by $Ber_s$ be the $\range$-valued Bernoulli law of variance $s$, then
$$F_{Ber_s,n}(b_n)=b_n-s_n(b_n^{-1}),$$
for $b_n\in\mathbb{H}_n^+.$ We will write $Ber$ short for $Ber_1$.

On the other hand , a $\range$-valued free central limit law or say $\range$-valued semicircularlaw is completely determine by its variance which is a completely positive map $s\in\cp$ \cite{BPV}. If we denote by $\gamma_s$ the $\range$-valued semicircular law of variance $s$, then the matricial Voiculescu's transform $(\phi_{\gamma,n})$ are given by the following formula
$$\phi_{\gamma,n}(b_n)=s_n(b_n^{-1}),$$
for $b_n\in\mathbb{H}_n^+$ in a neighborhood of infinity. We will write $\gamma$ short for $\gamma_1$.

\begin{proposition}\normalfont Given $s\in\cp$, we have 
$$Ber_{s}\vdash \gamma_s=\gamma_s.$$
\end{proposition}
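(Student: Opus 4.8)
The plan is to verify the identity $Ber_s \vdash \gamma_s = \gamma_s$ by showing that both sides have the same matricial reciprocal Cauchy transforms, which by the uniqueness proposition (that $\mu=\nu$ iff $F_\mu=F_\nu$ on $(\mathbb{H}^+_n)_{n\geq 1}$) suffices. First I would invoke Proposition \ref{reciprocal cauchy of orthogonal} to expand the left side:
\begin{equation}
F_{Ber_s\vdash\gamma_s,n}(b_n)=F_{Ber_s,n}(F_{\gamma_s,n}(b_n))-F_{\gamma_s,n}(b_n)+b_n,\qquad b_n\in\mathbb{H}_n^+.
\end{equation}
So the whole problem reduces to a computation with the two explicit transforms $F_{Ber_s,n}(b_n)=b_n-s_n(b_n^{-1})$ and the semicircular transform, which is given in Voiculescu-transform form by $\phi_{\gamma_s,n}(b_n)=s_n(b_n^{-1})$.

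Next I would translate the semicircular data into a usable form for $F_{\gamma_s,n}$. Recall $\phi_{\mu,n}(b_n)=F_{\mu,n}^{\langle-1\rangle}(b_n)-b_n$, so setting $w=F_{\gamma_s,n}(b_n)$ gives $\phi_{\gamma_s,n}(w)=b_n-w$, i.e. $b_n=w+s_n(w^{-1})$. This is the key relation: writing $w=F_{\gamma_s,n}(b_n)$, we have the functional equation
\begin{equation}\label{semicirc-fe}
b_n=w+s_n(w^{-1}).
\end{equation}
I would then substitute $w=F_{\gamma_s,n}(b_n)$ into the expansion above and compute $F_{Ber_s,n}(w)=w-s_n(w^{-1})$. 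The left-hand side becomes
\begin{align*}
F_{Ber_s\vdash\gamma_s,n}(b_n)&=\bigl(w-s_n(w^{-1})\bigr)-w+b_n\\
&=b_n-s_n(w^{-1}).
\end{align*}
Using \eqref{semicirc-fe} to rewrite $b_n-s_n(w^{-1})$ as $w$, this collapses to exactly $w=F_{\gamma_s,n}(b_n)$, which is $F_{\gamma_s,n}(b_n)$ as desired. Applying the uniqueness proposition across all $n$ then yields $Ber_s\vdash\gamma_s=\gamma_s$.

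The main obstacle I anticipate is purely bookkeeping rather than conceptual: I must be careful that the Voiculescu-transform relation $\phi_{\gamma_s,n}(w)=s_n(w^{-1})$ is being applied on the correct domain, since $\phi$ and the R-transform are only defined in a uniform neighborhood of infinity (equivalently $R$ near $0$), not on all of $\mathbb{H}^+_n$. Thus the identity $b_n=w+s_n(w^{-1})$ is a priori valid only for $b_n$ near infinity. To conclude equality of distributions I would either argue that $F_{\gamma_s,n}$ and the computed left-hand side are analytic functions agreeing on an open neighborhood of infinity in $\mathbb{H}^+_n$, hence agree everywhere by analytic continuation, or note that agreement on such a neighborhood already determines the distribution (as the R/Voiculescu transform does). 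Verifying that the substitution $w=F_{\gamma_s,n}(b_n)$ legitimately lands in the domain where $F_{Ber_s,n}$ and the functional equation \eqref{semicirc-fe} both hold is the one point requiring genuine care.
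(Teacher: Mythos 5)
Your proposal is correct and follows essentially the same route as the paper: expand $F_{Ber_s\vdash\gamma_s,n}$ via the orthogonal-convolution formula, plug in $F_{Ber_s,n}(b)=b-s_n(b^{-1})$, and use the semicircular relation $\phi_{\gamma_s,n}(F_{\gamma_s,n}(b_n))=b_n-F_{\gamma_s,n}(b_n)$ (your functional equation $b_n=w+s_n(w^{-1})$ is exactly this) to collapse everything to $F_{\gamma_s,n}(b_n)$. Your added attention to the domain where the Voiculescu transform is valid is a point the paper passes over silently, but it does not change the argument.
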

\begin{proof}
By Proposition \ref{reciprocal cauchy of orthogonal}, for $n\in \N$, we have that 
\begin{align*}
F_{Ber_{s}\vdash \gamma_s,n}(b_n)&=F_{Ber_{s},n}(F_{\gamma_s,n}(b_n))-F_{\gamma_s,n}(b_n)+b_n\\
&=F_{\gamma_s,n}(b_n)-  s_n(G_{\gamma_s,n}(b_n)) -F_{\gamma_s,n}(b_n)+b_n\\
&=F_{\gamma_s,n}(b_n)- \phi_{\gamma_s,n}(F_{\gamma_s,n}(b_n)))-F_{\gamma_s,n}(b_n)+b_n.
\end{align*}

 According to the definition of the matricial Voiculescu transform, we have that 
 $$ \phi_{\mu,n}(b_n)=F_{\mu,n}^{\langle-1\rangle}(b_n)-b_n.$$

Replace  $b_n$ by$F_{\mu,n}(b_n)$, then we have   we have 
$$ \phi_{\mu,n}(F_{\mu,n}(b_n))=b_n-F_{\mu,n}(b_n).$$

Therefore, we have 
$$F_{Ber_{s}\vdash \gamma_s,n}(b_n)=F_{\gamma_s,n}(b_n).$$
\end{proof}

An operator valued Belinschi-Nica transform $\Phi$ is defined combinatorially  in \cite{ABFN}. It is not clear that if the transform is a map from $\cbd$ to $\cbd$.  Below, we give an equivalent definition of $\Phi$ by a map from $\cbd$ to $\cbd$ which is an operator-valued analogue of relation (\ref{Phi-transform}).
\begin{definition}\normalfont Let $\mu\in\cbd.$  We define $\Phi:\cbd\rightarrow \cbd$ such that 
$$\Phi(\mu)=Ber\vdash \mu,$$
for $\mu\in \cbd$. Then, for each $n\in\N$, we have 
that
$$F_{\Phi(\mu),n}(b_n)=b_n-G_{\mu,n}(b_n),$$
for $b_n\in \mathbb{H}_n^+.$
\end{definition}

The following proposition exhibits a relation between the standard semicircular law, $\Phi$-transform and the s-free additive convolution.
\begin{proposition}\label{Phi suboridination semicirlucar}\normalfont Let $\mu\in\cbd$.  Then we have
$$\gamma\br\mu=\Phi(\gamma \bp \mu). $$
\end{proposition}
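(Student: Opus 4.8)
The plan is to verify the identity at the level of matricial reciprocal Cauchy transforms and then invoke the uniqueness proposition for matricial reciprocal Cauchy transforms (which asserts $\mu=\nu$ iff $F_\mu=F_\nu$ on $(\mathbb{H}^+_n)_{n\geq 1}$). Thus it suffices to show that $F_{\gamma\br\mu,n}(b_n)=F_{\Phi(\gamma\bp\mu),n}(b_n)$ for every $n\in\N$ and every $b_n$ in a neighborhood of infinity in $\mathbb{H}_n^+$. By the defining property of $\Phi$ the right-hand side equals $b_n-G_{\gamma\bp\mu,n}(b_n)$, so the whole problem reduces to establishing
$$F_{\gamma\br\mu,n}(b_n)=b_n-G_{\gamma\bp\mu,n}(b_n).$$

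First I would record the matricial data of the standard semicircular law. Since its variance is $1_\range$, the matricial Voiculescu transform is $\phi_{\gamma,n}(b_n)=b_n^{-1}$, and the relation $\phi_{\gamma,n}(b_n)=F_{\gamma,n}^{\langle-1\rangle}(b_n)-b_n$ gives $F_{\gamma,n}^{\langle-1\rangle}(c)=c+c^{-1}$ near infinity (equivalently, $R_{\gamma,n}$ is the identity). This single special feature of $\gamma$ is what drives the computation.

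Next I would compute the subordination function that subordinates $\gamma$, namely $F_{\mu\br\gamma,n}$. By Proposition \ref{reciprocal cauchy of Free-Monotone}, applied with the roles $\mu\mapsto\gamma$, $\nu\mapsto\mu$ and using commutativity of $\bp$, one has $F_{\gamma\bp\mu,n}(b_n)=F_{\gamma,n}(F_{\mu\br\gamma,n}(b_n))$. Applying the left inverse $F_{\gamma,n}^{\langle-1\rangle}$ and substituting the formula above yields
$$F_{\mu\br\gamma,n}(b_n)=F_{\gamma\bp\mu,n}(b_n)+G_{\gamma\bp\mu,n}(b_n).$$
I would then feed this into Proposition \ref{reciprocal cauchy of free-Boolean}, which reads $F_{\gamma\bp\mu,n}(b_n)=F_{\mu\br\gamma,n}(b_n)+F_{\gamma\br\mu,n}(b_n)-b_n$; solving for $F_{\gamma\br\mu,n}(b_n)$ and cancelling the common term $F_{\gamma\bp\mu,n}(b_n)$ gives precisely $b_n-G_{\gamma\bp\mu,n}(b_n)$, which is $F_{\Phi(\gamma\bp\mu),n}(b_n)$, as required.

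The main delicacy is the bookkeeping of domains: the formula for $F_{\gamma,n}^{\langle-1\rangle}$ and the inversion step are valid only in a neighborhood of infinity in $\mathbb{H}_n^+$, and I must keep the non-commutative convolution $\br$ in the correct order throughout, carefully distinguishing $\mu\br\gamma$ from $\gamma\br\mu$ when reading off the indices in the two propositions. Once the identity of reciprocal Cauchy transforms is established near infinity, both sides extend analytically over $\mathbb{H}_n^+$ and agree there, so the uniqueness proposition lets me conclude $\gamma\br\mu=\Phi(\gamma\bp\mu)$. As a cross-check I would note that the same conclusion follows from the R-transform Lemma of Section 7.3: it gives $F_{\gamma\br\mu,n}^{\langle-1\rangle}(b_n)=b_n+G_{\mu,n}(b_n)$ directly, and combining this with $G_{\mu,n}(F_{\gamma\br\mu,n}(b_n))=G_{\gamma\bp\mu,n}(b_n)$ (again from Proposition \ref{reciprocal cauchy of Free-Monotone}) recovers the desired formula.
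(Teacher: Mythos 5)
Your proof is correct and is essentially the paper's own argument: both rest on combining Proposition \ref{reciprocal cauchy of free-Boolean} with Proposition \ref{reciprocal cauchy of Free-Monotone} and the identity $F_{\gamma,n}^{\langle-1\rangle}(c)=c+c^{-1}$ to cancel the $F_{\mu\br\gamma,n}$ term and land on $b_n-G_{\gamma\bp\mu,n}(b_n)$. The only difference is cosmetic — you solve for $F_{\mu\br\gamma,n}$ explicitly before substituting, while the paper performs the same cancellation via an add-and-subtract of $G_{\gamma\bp\mu,n}(b_n)$ in one chain.
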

\begin{proof}For each $n\in\N$, $b_n\in \mathbb{H}_n^+$, we have
\begin{align*}
    F_{\gamma\br\mu,n}(b_n)&=F_{\gamma \bp \mu,n}(b_n)-F_{\mu\br \gamma,n}(b_n) +b_n\\
    &=F_{\gamma \bp \mu,n}(b_n)+G_{\gamma \bp \mu,}(b_n)-F_{\mu\br \gamma,}(b_n)-G_{\gamma \bp \mu,}(b_n)+b_n\\
    &=F^{<-1>}_{\gamma,n}( F_{\gamma \bp \mu,n}(b_n)) -F_{\mu\br \gamma,n}(b_n)-G_{\gamma \bp \mu,n}(b_n)+b_n\\
    &=-G_{\gamma \bp \mu,n}(b_n)+b_n\\
    &=F_{\Phi(\gamma \bp \mu),n}(b_n).
\end{align*}
The proof is done.
\end{proof}

Now we turn to study the $\range$-valued $\B$-transform which is already given in \cite{ABFN}.
\begin{definition}\normalfont
Let $s\in\cp$. The map $\B_s:\cbd\rightarrow \cbd$ is defined by the following formula
$$ \B_s(\mu)=(\mu^{\bp(1+s)})^{\uplus(1+s)^{-1}},$$
for $\mu\in\cbd$.
\end{definition}

The following proposition is proven combinatorially  in \cite{ABFN}.
\begin{proposition}\label{B-semigroup}\normalfont
Given $s,t\in\cp$, then we have
$$\B_s(\B_t(\mu))=\B_{s+t}(\mu),$$
for all $\mu\in\cbd. $
\end{proposition}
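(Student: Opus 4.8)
The plan is to pass to matricial transforms and to distill a single identity for $\B_t$ that makes the semigroup law transparent. Since an $\range$-valued distribution in $\cbd$ is determined by its family of matricial reciprocal Cauchy transforms $(F_{\cdot,n})_{n\geq1}$, and hence by its matricial Voiculescu transforms $(\phi_{\cdot,n})_{n\geq1}$ (recall $\phi_{\mu,n}=F_{\mu,n}^{\langle-1\rangle}-\mathrm{id}$ near infinity), it suffices to show that $\phi_{\B_s(\B_t(\mu)),n}=\phi_{\B_{s+t}(\mu),n}$ for every $n$ on a neighborhood of infinity in $\mathbb{H}_n^+$.

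First I would record the two power operations as linear transform identities. Theorem \ref{reciprocal Cauchy transform of Boolean operator power convolution} says the Boolean power linearizes $h_{\mu,n}=F_{\mu,n}-\mathrm{id}$: for a completely positive $\beta$ one has $h_{\nu^{\uplus\beta},n}=\beta_n\circ h_{\nu,n}$. For the free power I would use the subordination equation (\ref{operator power subordination}) together with $F_{\mu^{\bp\alpha},n}=F_{\mu,n}\circ\omega_{\alpha,\mu,n}$ from \cite{ABFN}; inverting $F$ in that relation yields the companion identity $\phi_{\mu^{\bp\alpha},n}=\alpha_n\circ\phi_{\mu,n}$, equivalently $F_{\mu^{\bp\alpha},n}^{\langle-1\rangle}(b)=b+\alpha_n(\phi_{\mu,n}(b))$.

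The crux, and the step I expect to be the main obstacle, is to combine these into the single characterization
$$\phi_{\B_t(\mu),n}\big(w+t_n(\phi_{\mu,n}(w))\big)=\phi_{\mu,n}(w),$$
valid for $w$ near infinity. Writing $\nu=\mu^{\bp(1+t)}$ and $\B_t(\mu)=\nu^{\uplus(1+t)^{-1}}$, I would parametrize by $w$ through $z=F_{\nu,n}^{\langle-1\rangle}(w)=w+(1+t)_n(\phi_{\mu,n}(w))$, so that $F_{\nu,n}(z)=w$; the Boolean-power formula with $\beta=(1+t)^{-1}$ then gives $F_{\B_t(\mu),n}(z)=z+(1+t)_n^{-1}(w-z)=w+t_n(\phi_{\mu,n}(w))$, whence $\phi_{\B_t(\mu),n}(F_{\B_t(\mu),n}(z))=z-F_{\B_t(\mu),n}(z)=\phi_{\mu,n}(w)$. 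The delicate point is that the free and Boolean powers linearize in \emph{different} coordinates ($F^{\langle-1\rangle}-\mathrm{id}$ versus $F-\mathrm{id}$), so the change of variable, the existence of the inverses on the correct neighborhoods of $0$ and of infinity, and the legitimacy of $(1+t)_n^{-1}$ and of the compositions must all be tracked carefully.

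Granting the characterization, the semigroup law is immediate. Put $\rho=\B_t(\mu)$ and take $w'=w+t_n(\phi_{\mu,n}(w))$, so that $\phi_{\rho,n}(w')=\phi_{\mu,n}(w)$. Applying the characterization to $\B_s(\rho)$ gives
$$\phi_{\B_s(\B_t(\mu)),n}\big(w'+s_n(\phi_{\rho,n}(w'))\big)=\phi_{\rho,n}(w'),$$
and since $w'+s_n(\phi_{\rho,n}(w'))=w+t_n(\phi_{\mu,n}(w))+s_n(\phi_{\mu,n}(w))=w+(s+t)_n(\phi_{\mu,n}(w))$, this reads $\phi_{\B_s(\B_t(\mu)),n}(w+(s+t)_n(\phi_{\mu,n}(w)))=\phi_{\mu,n}(w)$. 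This is precisely the characterization of $\B_{s+t}(\mu)$ at the same argument; as $w\mapsto w+(s+t)_n(\phi_{\mu,n}(w))$ carries a neighborhood of infinity onto a neighborhood of infinity, the two Voiculescu transforms coincide there, and therefore $\B_s(\B_t(\mu))=\B_{s+t}(\mu)$.
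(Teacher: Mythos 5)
Your argument is correct, but it takes a genuinely different route from the paper. The paper deduces the semigroup law as a quick corollary of Proposition \ref{exchange free and boolean}: choosing $p=(s+1)(s+t+1)^{-1}$ and $q=(s+t+1)(t+1)^{-1}$ in the commutation identity $(\mu^{\bp p})^{\uplus q}=(\mu^{\uplus q'})^{\bp p'}$ lets it slide the Boolean power past the free power and collapse the four-fold composition $\B_s\circ\B_t$ into $\B_{s+t}$ in two lines. You instead bypass that commutation identity (and the s-free/subordination machinery behind its proof) entirely, working only from the two linearization formulas $\phi_{\mu^{\bp\alpha},n}=\alpha_n\circ\phi_{\mu,n}$ (which the paper also takes for granted, cf.\ Lemma \ref{Voiculescu transform and inverse reciprocal Cauchy transform}) and $h_{\mu^{\uplus\beta},n}=\beta_n\circ h_{\mu,n}$ (Theorem \ref{reciprocal Cauchy transform of Boolean operator power convolution}); your change of variable $z=F^{\langle-1\rangle}_{\mu^{\bp(1+t)},n}(w)$ correctly yields the parametrized characterization $\phi_{\B_t(\mu),n}\bigl(w+t_n(\phi_{\mu,n}(w))\bigr)=\phi_{\mu,n}(w)$, from which the semigroup law is a one-line composition. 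I verified the computation: $F_{\B_t(\mu),n}(z)=w+t_n(\phi_{\mu,n}(w))$ and $z-F_{\B_t(\mu),n}(z)=\phi_{\mu,n}(w)$ both check out, and the final identification of distributions from agreement of Voiculescu transforms near infinity is the same standard of rigor the paper itself uses elsewhere (e.g.\ in the lemma on $R_{\mu\br\nu,n}$). What each approach buys: yours is more self-contained and arguably more elementary for this one statement, and your intermediate characterization of $\phi_{\B_t(\mu),n}$ is of independent interest (it is the operator-valued form of the subordination description of $\B_t$ in Belinschi--Nica); the paper's route costs more up front but produces the strictly stronger commutation relation $(\mu^{\bp p})^{\uplus q}=(\mu^{\uplus q'})^{\bp p'}$, which it reuses for Propositions \ref{infinitely divisibility} and beyond. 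The only points to keep tracking, as you note, are the domain bookkeeping (that $z$ and $w'$ stay in the neighborhoods of infinity where the left inverses are defined) and the complete positivity and invertibility of $(1+t)^{-1}$, both of which are handled exactly as in the paper's own definitions.
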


One would see that this proposition can be derived from the following result which is a generalization of Relation
(\ref{scalar FB-BF}).

\begin{proposition}\label{exchange free and boolean}\normalfont
Let $p,q\in\cp$ such that $p-1$  and $1-p+qp$ are invertible completely positive maps.  Then we have that 
$$(\mu^{\bp p})^{\uplus q}=(\mu^{\uplus q'})^{\bp p'}, $$
where $p',q'\in\cp$ are  defined as follows:
$$ p'=qp(1-p+qp)^{-1},\quad q'=1-p+qp.$$
\end{proposition}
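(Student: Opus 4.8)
The plan is to fix a level $n\in\N$ and show that $(\mu^{\bp p})^{\uplus q}$ and $(\mu^{\uplus q'})^{\bp p'}$ have the \emph{same} matricial reciprocal Cauchy transform on a neighborhood of infinity in $\mathbb{H}^+_n$; since these transforms are analytic and an element of $\cbd$ is determined by its matricial reciprocal Cauchy transform, this yields the asserted equality of distributions. First I would check that all four powers actually lie in $\cbd$, so that both sides are meaningful: $\mu^{\bp p}$ is defined because $p-1$ is completely positive, $\mu^{\uplus q'}$ because $q'$ is completely positive, and $(\mu^{\uplus q'})^{\bp p'}$ because $p'-1=(p-1)(q')^{-1}$ is a composition of completely positive maps (this is exactly where the hypothesis that $q'$ is \emph{invertible} completely positive enters), while the Boolean power $(\mu^{\bp p})^{\uplus q}$ is defined for any completely positive $q$ by Theorem \ref{reciprocal Cauchy transform of Boolean operator power convolution}.

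The two analytic inputs I would use are as follows. Writing $h_{\mu,n}(b)=F_{\mu,n}(b)-b$, Theorem \ref{reciprocal Cauchy transform of Boolean operator power convolution} says that a Boolean power simply rescales the $h$-transform, $h_{\mu^{\uplus\alpha},n}=\alpha_n\circ h_{\mu,n}$. For a free power I would invoke the subordination relation $F_{\mu^{\bp\alpha},n}(b_n)=F_{\mu,n}(\omega_{\alpha,\mu,n}(b_n))$ with $\omega_{\alpha,\mu,n}$ the unique solution of equation (\ref{operator power subordination}) (Theorem~8.4 of \cite{ABFN}). Setting $w:=\omega_{\alpha,\mu,n}(b_n)$ turns equation (\ref{operator power subordination}) into the convenient parametric identity
$$F_{\mu^{\bp\alpha},n}\big(w+(1_n-\alpha_n)(h_{\mu,n}(w))\big)=F_{\mu,n}(w)=w+h_{\mu,n}(w),$$
valid for $w$ near infinity.

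Now I would compute both sides parametrically. For the left side put $\nu:=\mu^{\bp p}$; the free-power identity gives $h_{\nu,n}(u)=p_n(h_{\mu,n}(w))$ at the point $u:=w+(1_n-p_n)(h_{\mu,n}(w))$, and then the Boolean rescaling by $q$ yields $h_{(\mu^{\bp p})^{\uplus q},n}(u)=q_n(h_{\nu,n}(u))=(q_np_n)(h_{\mu,n}(w))$, whence
$$F_{(\mu^{\bp p})^{\uplus q},n}(u)=u+(q_np_n)(h_{\mu,n}(w))=w+(1_n-p_n+q_np_n)(h_{\mu,n}(w))=w+q'_n(h_{\mu,n}(w)),$$
using $q'_n=1_n-p_n+q_np_n$. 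For the right side put $\rho:=\mu^{\uplus q'}$, so $h_{\rho,n}=q'_n\circ h_{\mu,n}$, and apply the free-power identity to $\rho^{\bp p'}$ at the same parameter $w$: its argument is $w+\big((1_n-p'_n)q'_n\big)(h_{\mu,n}(w))$, and the identity $(1_n-p'_n)q'_n=q'_n-p'_nq'_n=q'_n-q_np_n=1_n-p_n$ (which uses $p'_nq'_n=q_np_n$, i.e.\ precisely the definition $p'=qp(q')^{-1}$) shows this argument is again $u$, with value $w+h_{\rho,n}(w)=w+q'_n(h_{\mu,n}(w))$. Both sides therefore equal $w+q'_n(h_{\mu,n}(w))$ at $u$, so the two matricial reciprocal Cauchy transforms coincide on the range of $w\mapsto u$, a perturbation of the identity near infinity; equality then propagates to all of $\mathbb{H}^+_n$ by analyticity and I conclude as above.

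The step I expect to be most delicate is tracking the noncommutativity of the completely positive maps. Unlike the scalar case, where $pq=qp$, the order in which $p_n$ and $q_n$ are composed is dictated by the order of the convolution operations, and the entire argument hinges on the two algebraic identities $q'_n=1_n-p_n+q_np_n$ and $p'_nq'_n=q_np_n$ producing the \emph{common} argument $w+(1_n-p_n)(h_{\mu,n}(w))$ and the common value $w+q'_n(h_{\mu,n}(w))$; any misordering would break the match. A secondary technical point is the existence bookkeeping for $p'-1=(p-1)(q')^{-1}$, where both the invertibility and the complete positivity hypotheses are genuinely needed, together with checking that $w\mapsto u$ is a bijection onto a neighborhood of infinity so that the germ equality of the transforms suffices.
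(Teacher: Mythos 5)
Your proof is correct, and it takes a genuinely different (and in my view cleaner) route than the paper. The paper first invokes Proposition \ref{reciprocal Cauchy transform of  s-free convolution powers} to identify the subordination function $\omega_{\alpha,\mu,n}$ with the reciprocal Cauchy transform of the s-free convolution $\mu^{\uplus(\alpha-1)}\br\mu^{\uplus(\alpha-1)}$, writes both $F_{(\mu^{\bp p})^{\uplus q},n}$ and $F_{(\mu^{\uplus q'})^{\bp p'},n}$ as affine expressions in that common function via equation (\ref{operator power subordination 2}), and then matches coefficients using $(p'-1)q'=p-1$ and $(p'-1)^{-1}p'=q(p-1)^{-1}$. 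You instead work parametrically: you never need the s-free identification, only the functional equation (\ref{operator power subordination}) itself together with the subordination relation $F_{\mu^{\bp\alpha},n}=F_{\mu,n}\circ\omega_{\alpha,\mu,n}$, and you reduce everything to the two order-sensitive identities $q'_n=1_n-p_n+q_np_n$ and $(1_n-p'_n)q'_n=1_n-p_n$ applied to $h_{\mu,n}(w)$. What your approach buys is exactly the robustness you flag: the paper's displayed formulas contain a stray $t$ and state the key identity as $q'(p'-1)=p-1$, which is only valid in the composition order $(p'-1)\circ q'$; your parametric bookkeeping forces the correct order automatically and also makes transparent where the uniqueness clause of Theorem~8.4 of \cite{ABFN} is used (to identify $\omega_{p',\mu^{\uplus q'},n}(u)=w$). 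Both proofs rest on the same two analytic inputs (Theorem \ref{reciprocal Cauchy transform of Boolean operator power convolution} and the subordination equation), so the difference is one of organization rather than of underlying machinery.

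One small caveat on your existence bookkeeping: from $q'$ being an \emph{invertible} completely positive map it does not follow that $(q')^{-1}$ is completely positive, so the assertion that $p'-1=(p-1)(q')^{-1}$ is ``a composition of completely positive maps'' is not justified as stated. The paper itself does not address the membership of $(\mu^{\uplus q'})^{\bp p'}$ in $\cbd$ at all, so you are being more careful than the source, but if you want this step airtight you should either add the hypothesis that $(q')^{-1}$ is completely positive (equivalently, that $p'-1$ is completely positive) or observe that the identity of matricial reciprocal Cauchy transforms can be established on the level of formal distributions and that positivity of the right-hand side is then inherited from the left-hand side.
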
 
\begin{proof}We just need to compare the matricial reciprocal Cauchy transforms of the two distributions.
By Proposition \ref{reciprocal Cauchy transform of Boolean operator power convolution}, for each $n\in\N$ and $b_n\in \mathbb{H}_n^+$, we have 
$$F_{(\mu^{\bp p})^{\uplus q},n}(b_n)=q_n[F_{\mu^{\bp p},n}(b_n)]+(1_n-q_n)b_n.$$
By Proposition \ref{reciprocal Cauchy transform of  s-free convolution powers}, and equation (\ref{operator power subordination 2}), we have that  
$$ F_{\mu^{\bp p},n}(b_n)=(p_n-1_n)^{-1}p_nF_{\mu^{\uplus (p-1)}\br \mu^{\uplus (p-1)},n}(b_n)-(p_n-1_n)^{-1}b_n.$$
Therefore, we have that 
$$F_{(\mu^{\bp p})^{\uplus q},n}(b_n)
=q_n(p_n-1+n)^{-1}p_nF_{\mu^{\uplus (p-1)}\br \mu^{\uplus (p-1)},n}(b_n)+(1_n-q_n(p_n-1)^{-1}p_n)b_n.$$

On the other hand, apply Proposition \ref{reciprocal Cauchy transform of  s-free convolution powers} to $F_{(\mu^{\uplus q'})^{\bp p'},n}(b_n)$ we have that 
\begin{align*}
F_{(\mu^{\uplus q'})^{\bp p'}}(b_n)&=(p_n'-1_n)^{-1}t\omega_{q',p'}(b_n)+(1_n-(p_n'-1)^{-1}q'_n)b_n,
\end{align*}
where $\omega_{q',p',n}(b_n)=F_{(\mu^{\uplus q'})^{\uplus (p'-1)}\br(\mu^{\uplus q'})^{\uplus (p'-1)},n}(b_n)=F_{\mu^{\uplus q'(p'-1)}\br \mu^{\uplus q'(p'-1)},n}(b_n)$.

Let $q'=1-p+qp$ and $p'=qp(1-p+qp)^{-1}$. Then we have 
$$ q'(p'-1)=p-1$$
and 
$$ (p'-1)^{-1}t=q(p-1)^{-1}. $$
In this case, we have that 
$$F_{(\mu^{\uplus q'})^{\bp p'}}(b_n)=F_{(\mu^{\uplus q'})^{\bp p'}}(b_n).$$
The proof is done.
\end{proof}

The following is a proof of Proposition \ref{B-semigroup} by use the Proposition \ref{exchange free and boolean}.
\begin{proof}
Apply Proposition \ref{exchange free and boolean} by setting $p=(s+1)(s+t+1)^{-1} $ and $q=(s+t+1)(t+1)^{-1}$, 
then  $p'=s+1$ and $q'=(1+t)^{-1}$.   Therefore, we have 
$$(\mu^{\bp (s+1)(s+t+1)^{-1}})^{\uplus (s+t+1)(t+1)^{-1}}=(\mu^{\uplus (1+t)^{-1}})^{\bp s+1}.$$
It follows that 
\begin{align*}
\B_s(\B_t(\mu))&= (((\mu^{\bp 1+t})^{\uplus (1+t)^{-1}})^{\bp 1+s})^{\uplus (1+s)^{-1}}\\
&= (((\mu^{\bp 1+t})^{\bp (1+t+s)(1+t)^{-1}})^{\uplus (1+s)(1+t+s)^{-1}})^{\uplus (1+s)^{-1}}\\
&= (\mu^{\bp 1+t+s})^{\uplus (1+t+s)^{-1}}.
\end{align*}
The proof is done.
\end{proof}

We provide one more application of Proposition \ref{exchange free and boolean} in studying the $\B$-transforms.  Given  $s\in\cp$ such that  $s\geq 1$ and $\mu\in\cbd$, we have that $\mu^{\bp s}\in\cbd$ \cite{ABFN, Sh1}. By Theorem 8.4 in \cite{ABFN} and Proposition \ref{reciprocal Cauchy transform of Boolean operator power convolution}, we have that $\mu^{\uplus(s-1)}\br \mu^{\uplus(s-1)}$ is the distribution subordinate to $\mu^{\br s}$ with respect to $\mu$, namely 
$$F_{\mu,n}(F_{\mu^{\uplus(s-1)}\br \mu^{\uplus(s-1)}})=F_{\mu^{\bp s},n},$$
for all $n$. The following result is an operator valued generalization of Proposition 1.12 in \cite{Nica}.
\begin{proposition}\normalfont\label{infinitely divisibility}
Given $s\in\cp$ and $\mu\cbd$, then we have 
$$\mu^{\uplus s}\br \mu^{\uplus s}=(\B(\mu))^{\bp s}.$$
\end{proposition}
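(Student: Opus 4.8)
The plan is to compare the matricial reciprocal Cauchy transforms of the two sides, as in the proof of Proposition \ref{exchange free and boolean}, and to identify each of them with the reciprocal Cauchy transform of the (unique) distribution subordinate to $\mu^{\bp(1+s)}$ relative to $\mu$. Since matricial reciprocal Cauchy transforms determine $\range$-valued distributions, it suffices to show that $F_{\mu^{\uplus s}\br\mu^{\uplus s},n}$ and $F_{(\B(\mu))^{\bp s},n}$ agree on a neighborhood of infinity in $\mathbb{H}_n^+$ for every $n$.

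First I would treat the left-hand side. Applying Proposition \ref{reciprocal Cauchy transform of s-free convolution powers} with $\alpha=1+s$ (so that $\eta=\alpha-1=s$), together with the subordination identity displayed just before the present proposition (with $s$ replaced by $1+s$), gives
$$F_{\mu,n}\bigl(F_{\mu^{\uplus s}\br\mu^{\uplus s},n}(b_n)\bigr)=F_{\mu^{\bp(1+s)},n}(b_n);$$
that is, $F_{\mu^{\uplus s}\br\mu^{\uplus s},n}$ is the reciprocal Cauchy transform subordinate to $\mu^{\bp(1+s)}$ over $\mu$.

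For the right-hand side I would first record the operator-valued analogue of relation (\ref{scalar subordination and B}), namely $\B(\mu)=\mu\br\mu$. This follows by comparing $F$-transforms: Theorem \ref{reciprocal Cauchy transform of Boolean operator power convolution} applied to $\B(\mu)=(\mu^{\bp 2})^{\uplus 1/2}$ yields $F_{\B(\mu),n}=\tfrac12\bigl(F_{\mu^{\bp 2},n}+b_n\bigr)$, while Proposition \ref{reciprocal Cauchy transform of s-free convolution powers} (with $\alpha=2$, so $\mu^{\uplus 1}=\mu$) together with the functional equation (\ref{operator power subordination 2}) gives $F_{\mu\br\mu,n}=\omega_{2,\mu,n}=\tfrac12\bigl(b_n+F_{\mu^{\bp 2},n}\bigr)$, so the two coincide. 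Then Proposition \ref{R-transform and subordination} (with $\nu=\mu$ and the completely positive map $s$) gives $(\B(\mu))^{\bp s}=(\mu\br\mu)^{\bp s}=\mu^{\bp s}\br\mu$. Finally, Proposition \ref{reciprocal cauchy of Free-Monotone} applied to $\mu$ and $\mu^{\bp s}$, combined with the additivity of the free convolution powers, $\mu^{\bp(1+s)}=\mu\bp\mu^{\bp s}$, yields
$$F_{\mu,n}\bigl(F_{\mu^{\bp s}\br\mu,n}(b_n)\bigr)=F_{\mu^{\bp(1+s)},n}(b_n),$$
so $F_{(\B(\mu))^{\bp s},n}=F_{\mu^{\bp s}\br\mu,n}$ is likewise subordinate to $\mu^{\bp(1+s)}$ over $\mu$.

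To conclude, both $F_{\mu^{\uplus s}\br\mu^{\uplus s},n}$ and $F_{(\B(\mu))^{\bp s},n}$ satisfy $F_{\mu,n}\circ(\,\cdot\,)=F_{\mu^{\bp(1+s)},n}$ near infinity. Since $F_{\mu,n}$ is one-to-one on a neighborhood of infinity in $\mathbb{H}_n^+$ (the injectivity invoked after Proposition \ref{reciprocal cauchy of Free-Monotone}), the two functions agree there, whence the distributions coincide by the uniqueness of matricial reciprocal Cauchy transforms. I expect the main obstacle to be the bridge identity on the right-hand side: verifying $\B(\mu)=\mu\br\mu$ through the subordination functional equation and correctly propagating the completely positive ``scalars'' $1/2$, $s$ and $1+s$ through Theorem \ref{reciprocal Cauchy transform of Boolean operator power convolution} and Propositions \ref{R-transform and subordination} and \ref{reciprocal cauchy of Free-Monotone}, along with the bookkeeping ensuring that every power occurring lies in $\cbd$ (which is where the hypothesis effectively forces $s\geq 1$, so that $\mu^{\bp s}$ and hence $\mu^{\bp(1+s)}$ are genuine distributions).
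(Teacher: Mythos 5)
Your argument is correct, but it follows a genuinely different route from the paper's. The paper stays entirely inside the calculus of convolution powers: Proposition \ref{subordination, Boolean} with $\mu_1=\mu_2=\mu^{\uplus s}$ gives $\mu^{\uplus s}\br\mu^{\uplus s}=\bigl[(\mu^{\uplus s})^{\bp 2}\bigr]^{\uplus 1/2}$, and then two applications of the commutation rule $(\mu^{\bp p})^{\uplus q}=(\mu^{\uplus q'})^{\bp p'}$ of Proposition \ref{exchange free and boolean} (with $p=1+s$, $q=2s(1+s)^{-1}$ on the left-hand side and $p=(1+s)/2$, $q=s(1+s)^{-1}$ on the right-hand side) reduce both sides to the common expression $(\mu^{\bp(1+s)})^{\uplus s(1+s)^{-1}}$; no subordination or uniqueness argument is invoked. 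You instead identify both sides with the subordination function of $\mu^{\bp(1+s)}$ relative to $\mu$: the left side via Proposition \ref{reciprocal Cauchy transform of  s-free convolution powers} (this is exactly the identity the paper displays just before the proposition, with $s$ replaced by $1+s$), and the right side via the bridge $\B(\mu)=\mu\br\mu$, then $(\mu\br\mu)^{\bp s}=\mu^{\bp s}\br\mu$ from Proposition \ref{R-transform and subordination}, and finally Proposition \ref{reciprocal cauchy of Free-Monotone} together with $\mu\bp\mu^{\bp s}=\mu^{\bp(1+s)}$; you conclude by injectivity of $F_{\mu,n}$ near infinity. Your route has the virtue of isolating $\B(\mu)=\mu\br\mu$ --- the operator-valued analogue of the scalar relation $\mu\br\mu=\B_1(\mu)$, i.e.\ the $s=1$ case of the statement --- and of making the subordination interpretation explicit; the cost is that you must appeal to uniqueness of subordination functions and to the fact that agreement of reciprocal Cauchy transforms near infinity determines the distribution, both of which the paper uses elsewhere but avoids here. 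The positivity bookkeeping is comparable in the two arguments: you need $\mu^{\bp s}\in\cbd$ (so effectively $s\geq 1$), while the paper's second application of Proposition \ref{exchange free and boolean} needs $(s-1)/2$ to be an invertible completely positive map, so neither approach is more general on that score.
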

\begin{proof}
By Proposition \ref{subordination, Boolean}, we have that 
\begin{align*}
\mu^{\uplus s}\bp \mu^{\uplus s}&=[\mu^{\uplus s}\br \mu^{\uplus s}]\uplus[\mu^{\uplus s}\br \mu^{\uplus s}]\\
&=[\mu^{\uplus s}\br \mu^{\uplus s}]^{\uplus 2}.
\end{align*}

Therefore, we have 
$$ \mu^{\uplus s}\br \mu^{\uplus s}=[\mu^{\uplus s}\bp \mu^{\uplus s}]^{\uplus 1/2}.$$
Let $p=1+s$ and $q=2s(1+s)^{-1}$.  Then the $p',q'$ in Proposition \ref{exchange free and boolean} are
$$p'=2,\quad q'=s.$$ 

It follows that 
\begin{align*}
((\mu^{\uplus s})^{\bp 2})^{\uplus 1/2}&=((\mu^{\bp 1+s})^{\bp 2s(1+s)})^{\uplus 1/2}\\
&=(\mu^{\bp 1+s})^{\bp s(1+s)}.
\end{align*}

On the other hand, let  $p=(1+s)/2$ and $q=s(1+s)^{-1}$.  Then the $p',q'$ in Proposition \ref{exchange free and boolean} are
$$p'=s,\quad q'=1/2.$$ 
\begin{align*}
 (\B(\mu))^{\bp s}&=((\mu^{\bp 2})^{\uplus 1/2})^{\uplus s}\\
&=((\mu^{\bp 2})^{\bp (1+s)/2})^{\uplus s(1+s)^{-1}}\\
&=(\mu^{\bp 1+s})^{\bp s(1+s)}.\\
\end{align*}
The proof is done.
\end{proof}

\begin{definition} \normalfont Let $\mu\in\cbd$.  $\mu$ is $\bp$-infinitely divisible if $\mu^{\bp 1/n}\in \cbd$ for all $n\geq 1.$
\end{definition}
We see that in Proposition \ref{infinitely divisibility},  we actually proved the the following relation
$$ \B(\mu^{\uplus s})=\B(\mu)^{\bp s}.$$
Let $s\in\cp$ be invertible. Then we have 
\begin{equation}
\B(\mu)=[\B(\mu^{\uplus s^{-1}})]^{\bp s}.
\end{equation}
Notice that $\mu^{\uplus s^{-1}}\in\cbd$ if $\nu\in\cbd.$  Therefore,  $\B(\mu)$ is $\bp$-infinitely divisible for all $\mu\in\cbd$ which is the property of the scalar Bercovici-Pata bijection. 

Now, we turn to study  an operator valued generalization of Anshelevich's free convolution with two states.
\begin{lemma}\label{Voiculescu transform and inverse reciprocal Cauchy transform}\normalfont
  Let $\mu\in\cbd$ and  $s$ is a completely positive map from $\range$ to $\range$ such $\mu^{\bp s}\in \cbd$. Then, for each $n]\in \N$, we have  that 
  $$ F_{\mu^{\bp s},n}^{\langle-1\rangle}(b_n)=s_n[F_{\mu,n}^{\langle-1\rangle}(b_n)]+(1_n-s_n)b_n,$$
  $b_n\in\mathbb{H}^+_n$ is a in sufficiently small neighborhood of infinity.
\end{lemma}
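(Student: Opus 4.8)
The plan is to pass everything to the matricial Voiculescu transform $\phi=(\phi_{\mu,n})_{n\geq 1}$, which linearizes the free additive convolution and under which the free convolution power by a completely positive map acts by post-composition with $s_n$. Recall from Section 2 that on a uniform neighborhood of infinity in $\mathbb{H}_n^+$ one has
$$\phi_{\mu,n}(b_n)=F_{\mu,n}^{\langle-1\rangle}(b_n)-b_n,$$
so it suffices to identify $\phi_{\mu^{\bp s},n}$ and then add back $b_n$.

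First I would record the defining property of the operator-valued free convolution power in transform form. Since $R_{\mu\bp\nu,n}=R_{\mu,n}+R_{\nu,n}$ for all $n$, the power $\mu^{\bp s}$ (for $s\in\cp$ with $\mu^{\bp s}\in\cbd$) is characterized by $R_{\mu^{\bp s},n}=s_n(R_{\mu,n})$, equivalently $\phi_{\mu^{\bp s},n}=s_n\circ\phi_{\mu,n}$; this is exactly the generalization of the scalar relation $R_{\mu^{\bp t}}=tR_{\mu}$ established in \cite{ABFN}. Granting this, the computation is immediate: working in a sufficiently small neighborhood of infinity,
\begin{align*}
F_{\mu^{\bp s},n}^{\langle-1\rangle}(b_n)
&=\phi_{\mu^{\bp s},n}(b_n)+b_n\\
&=s_n(\phi_{\mu,n}(b_n))+b_n\\
&=s_n\big(F_{\mu,n}^{\langle-1\rangle}(b_n)-b_n\big)+b_n\\
&=s_n\big(F_{\mu,n}^{\langle-1\rangle}(b_n)\big)+(1_n-s_n)(b_n),
\end{align*}
where the last step uses the $\C$-linearity of $s_n$.

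The only genuine care required is the bookkeeping of domains: $F_{\mu,n}^{\langle-1\rangle}$, $\phi_{\mu,n}$, and their counterparts for $\mu^{\bp s}$ are defined only on neighborhoods of infinity, so I would fix a single neighborhood on which all of the transforms involved, together with their compositions with $s_n$, are simultaneously defined and on which the identity $\phi_{\mu,n}=F_{\mu,n}^{\langle-1\rangle}-\mathrm{id}$ holds. The main conceptual point, and the place where the hypothesis $\mu^{\bp s}\in\cbd$ enters, is the transform characterization $\phi_{\mu^{\bp s},n}=s_n\circ\phi_{\mu,n}$; once this is taken from \cite{ABFN}, the statement is purely formal. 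Note the parallel with the Boolean case of Theorem \ref{reciprocal Cauchy transform of Boolean operator power convolution}: the free convolution linearizes on $F^{\langle-1\rangle}-\mathrm{id}$ rather than on $F-\mathrm{id}$, which is precisely why $s_n$ here is applied to $F_{\mu,n}^{\langle-1\rangle}$ instead of to $F_{\mu,n}$.
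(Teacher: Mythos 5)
Your proposal is correct and follows essentially the same route as the paper: both pass to the matricial Voiculescu transform via $\phi_{\mu,n}(b_n)=F_{\mu,n}^{\langle-1\rangle}(b_n)-b_n$, invoke $\phi_{\mu^{\bp s},n}=s_n\circ\phi_{\mu,n}$, and simplify. Your version merely spells out the algebra and the domain bookkeeping that the paper leaves implicit.
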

\begin{proof}
For each $n\in \N$,  we have that  
$$ \phi_{\mu,n}(b_n)=F_{\mu,n}^{\langle-1\rangle}(b_n)-b_n.$$
and 
$$ \phi_{\mu^{\bp s},n}(b_n)=F_{\mu^{\bp s},n}^{\langle-1\rangle}(b_n)-b_n.$$
Notice that $$ \phi_{\mu^{\bp s},n}(b_n)=s_n[\phi_{\mu,n}(b_n)].$$
The result follows by simplifying the above equation.
\end{proof}

\begin{proposition}\label{operator free envolution}\normalfont
Given $\mu,\nu\in\cbd$ and $s\in\cp$  such that $\mu^{\bp s}\in \cbd$, then we have 
$$ \B_{s}(\mu\br \nu)=\mu\br(\mu^{\bp s}\bp\nu).$$
\end{proposition}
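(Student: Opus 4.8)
The plan is to verify the identity at the level of matricial reciprocal Cauchy transforms, since a distribution in $\cbd$ is determined by the sequence $(F_{\cdot,n})_{n\geq 1}$ on $(\mathbb{H}^+_n)_{n\geq 1}$. I work at a fixed level $n$ and, to lighten the bookkeeping, record everything through the truncated transform $h_{\mu,n}(b_n)=F_{\mu,n}(b_n)-b_n$. First I would simplify the left-hand side: unwinding the definition of $\B_s$ gives $\B_s(\mu\br\nu)=\bigl((\mu\br\nu)^{\bp(1+s)}\bigr)^{\uplus(1+s)^{-1}}$, and since $1+s\geq 1$ we have $\mu^{\bp(1+s)}\in\cbd$, so Proposition \ref{R-transform and subordination} rewrites the inner free power as $(\mu\br\nu)^{\bp(1+s)}=\mu^{\bp(1+s)}\br\nu$. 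Applying Theorem \ref{reciprocal Cauchy transform of Boolean operator power convolution} with the completely positive map $(1+s)^{-1}$ then yields $h_{\B_s(\mu\br\nu),n}=(1+s)^{-1}_n\,h_{\mu^{\bp(1+s)}\br\nu,n}$.

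Next I would isolate two auxiliary identities. Combining Propositions \ref{reciprocal cauchy of Free-Monotone} and \ref{reciprocal cauchy of free-Boolean} gives, for any $A,B\in\cbd$, the clean subordination rule $h_{A\br B,n}=h_{A,n}\circ F_{B\br A,n}$; this is the workhorse of the argument. The second identity describes how $h$ transforms under a free power: from the defining subordination property $F_{\mu^{\bp\alpha},n}=F_{\mu,n}\circ\omega_{\alpha,\mu,n}$ underlying Equation (\ref{operator power subordination}) (Theorem 8.4 of \cite{ABFN}) together with $\omega_{\alpha,\mu,n}(b_n)-b_n=(\alpha_n-1_n)h_{\mu,n}(\omega_{\alpha,\mu,n}(b_n))$, a one-line computation gives $h_{\mu^{\bp\alpha},n}=\alpha_n\,(h_{\mu,n}\circ\omega_{\alpha,\mu,n})$, which I will use with $\alpha=1+s$.

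With these in hand I would rewrite both sides. For the right-hand side, writing $\sigma=\mu^{\bp s}\bp\nu$ (which lies in $\cbd$ because $\mu^{\bp s}\in\cbd$), the subordination rule gives $h_{\mu\br\sigma,n}=h_{\mu,n}\circ F_{\sigma\br\mu,n}$. For the left-hand side, the subordination rule applied to $\mu^{\bp(1+s)}\br\nu$ followed by the free-power identity gives $h_{\mu^{\bp(1+s)}\br\nu,n}=(1+s)_n\,h_{\mu,n}\circ\omega_{1+s,\mu,n}\circ F_{\nu\br\mu^{\bp(1+s)},n}$, so that the factor $(1+s)^{-1}_n$ from the first paragraph cancels the $(1+s)_n$. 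Since $F_{\mu,n}$, and hence $h_{\mu,n}$, is injective on a neighborhood of infinity in $\mathbb{H}^+_n$, the whole proposition reduces to the single identity of subordination functions
$$F_{(\mu^{\bp s}\bp\nu)\br\mu,n}(b_n)=\omega_{1+s,\mu,n}\!\bigl(F_{\nu\br\mu^{\bp(1+s)},n}(b_n)\bigr).$$

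The last step, which I expect to be the crux, is to prove this identity by recognizing that both sides equal the subordination function of $\mu^{\bp(1+s)}\bp\nu$ relative to the free factor $\mu$. On the left, Proposition \ref{reciprocal cauchy of Free-Monotone} applied to $\mu\bp\sigma$ gives $F_{\sigma\br\mu,n}=F_{\mu,n}^{\langle-1\rangle}\circ F_{\mu\bp\sigma,n}$; on the right, the same proposition applied to $\mu^{\bp(1+s)}\bp\nu$ together with $F_{\mu,n}\circ\omega_{1+s,\mu,n}=F_{\mu^{\bp(1+s)},n}$ identifies that side with $F_{\mu,n}^{\langle-1\rangle}\circ F_{\mu^{\bp(1+s)}\bp\nu,n}$. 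These agree once one observes $\mu\bp\sigma=\mu\bp(\mu^{\bp s}\bp\nu)=\mu^{\bp(1+s)}\bp\nu$, which follows from commutativity and associativity of $\bp$ and the additivity $\phi_{\mu,n}+\phi_{\mu^{\bp s},n}=(1_n+s_n)\phi_{\mu,n}=\phi_{\mu^{\bp(1+s)},n}$ of the matricial Voiculescu transform. The points demanding care are keeping the completely positive maps $(1+s)$ and $(1+s)^{-1}$ on the correct side of each $h$, and restricting throughout to the neighborhood of infinity where the inverses $F_{\mu,n}^{\langle-1\rangle}$ and the subordination functions are all defined.
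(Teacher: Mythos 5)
Your proof is correct, and it reaches the paper's key intermediate identity $\mu\br(\mu^{\bp s}\bp\nu)=(\mu^{\bp(s+1)}\br\nu)^{\uplus(1+s)^{-1}}$ by a genuinely different organization of the same toolkit. The paper computes $F_{\mu\br(\mu^{\bp s}\bp\nu),n}$ head-on: it applies Proposition \ref{reciprocal cauchy of free-Boolean} twice and Proposition \ref{reciprocal cauchy of Free-Monotone} once, then eliminates $F_{\mu,n}^{\langle-1\rangle}$ using Lemma \ref{Voiculescu transform and inverse reciprocal Cauchy transform} and performs an explicit cancellation of the $(1+s_n)^{-1}s_n$ terms until the Boolean-power formula of Theorem \ref{reciprocal Cauchy transform of Boolean operator power convolution} becomes visible. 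You instead start from $\B_s(\mu\br\nu)$, consolidate Propositions \ref{reciprocal cauchy of Free-Monotone} and \ref{reciprocal cauchy of free-Boolean} into the single rule $h_{A\br B,n}=h_{A,n}\circ F_{B\br A,n}$, and replace Lemma \ref{Voiculescu transform and inverse reciprocal Cauchy transform} by the subordination property $F_{\mu^{\bp\alpha},n}=F_{\mu,n}\circ\omega_{\alpha,\mu,n}$ behind Equation (\ref{operator power subordination}); this makes the $(1+s)_n$ factors cancel structurally and exposes the conceptual core, namely that both $F_{(\mu^{\bp s}\bp\nu)\br\mu,n}$ and $\omega_{1+s,\mu,n}\circ F_{\nu\br\mu^{\bp(1+s)},n}$ are the subordination function of $\mu^{\bp(1+s)}\bp\nu$ with respect to $\mu$. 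That is a cleaner explanation of why the proposition is true, at the modest cost of invoking Theorem 8.4 of \cite{ABFN} where the paper stays inside its own Lemma \ref{Voiculescu transform and inverse reciprocal Cauchy transform}. Two small blemishes: your claim that $h_{\mu,n}=F_{\mu,n}-\mathrm{id}$ is injective is not justified and can fail (for a point mass $h_{\mu,1}$ is constant), but it is also unnecessary --- you only use the implication in the direction that the subordination-function identity implies equality of the $h$'s, which follows by composing with $h_{\mu,n}$ on the left; and, as you note, all manipulations involving $F_{\mu,n}^{\langle-1\rangle}$ and $\omega_{1+s,\mu,n}$ must be confined to a neighborhood of infinity in $\mathbb{H}^+_n$, exactly as in the paper's own argument.
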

\begin{proof}
For each $n\in \N$, $b_n\in \mathbb{H}^+_n$, by Proposition\ref{reciprocal cauchy of free-Boolean}, we have 
\begin{equation}\label{proof of operator free envolution}
\begin{aligned}
F_{\mu\br(\mu^{\bp s}\bp\nu),n}(b_n)&= F_{\mu\bp(\mu^{\bp s}\bp\nu),n}(b_n)-F_{(\mu^{\bp s}\bp\nu)\br,n}(b_n)+b_n\\
&= F_{\mu^{\bp (s+1)}\bp\nu,n}(b_n)-F_{(\mu^{\bp s}\bp\nu)\br\mu,n}(b_n)+b_n\\
&= F_{\mu^{\bp (s+1)}\br\nu,n}(b_n)+F_{\nu\br\mu^{\bp (s+1)},n}(b_n)-F_{(\mu^{\bp s}\bp\nu)\br\mu,n}(b_n)\\
&= F_{\mu^{\bp (s+1)}\br\nu,n}(b_n)+F_{\nu\br\mu^{\bp (s+1)},n}(b_n)-F_{\mu,n}^{\langle-1\rangle}[F_{\mu^{\bp (s+1)}\bp\nu,n}(b_n)].\\
\end{aligned}
\end{equation}

Notice that $1+s$ is invertible, by Lemma \ref{Voiculescu transform and inverse reciprocal Cauchy transform}, we have that 
$$ F_{\mu^{\bp (1+s)},n}^{\langle-1\rangle}(b_n)=(1+s_n)[F_{\mu,n}^{\langle-1\rangle}(b_n)]-s_nb_n.$$

Therefore, we have 
$$ (1+s_n)^{-1}[ F_{\mu^{\bp (1+s)},n}^{\langle-1\rangle}(b_n)+s_nb_n]=F_{\mu,n}^{\langle-1\rangle}(b_n).$$

It follows that 
\begin{align*}
&F_{\mu,n}^{\langle-1\rangle}[F_{\mu^{\bp (s+1)}\bp\nu,n}(b_n)]\\
=&(1+s_n)^{-1}[ F_{\mu^{\bp (1+s)},n}^{\langle-1\rangle}(F_{\mu^{\bp (s+1)}\bp\nu,n}(b_n))+s_nF_{\mu^{\bp (s+1)}\bp\nu,n}(b_n)]\\
=&(1+s_n)^{-1}[ F_{\mu^{\bp (1+s)},n}^{\langle-1\rangle}(F_{\mu^{\bp (s+1)}\bp\nu,n}(b_n))]+(1+s_n)^{-1}s_n[F_{\mu^{\bp (s+1)}\bp\nu,n}(b_n)]\\
=&(1+s_n)^{-1}[ F_{\nu\br\mu^{\bp (s+1)},n}(b_n)]+(1+s_n)^{-1}s_n[F_{\mu^{\bp (s+1)}\bp\nu,n}(b_n)].\\
\end{align*}

Then, the Equation (\ref{proof of operator free envolution}) becomes 
\begin{align*}
&F_{\mu\br(\mu^{\bp s}\bp\nu),n}(b_n)\\
=&F_{\mu^{\bp (s+1)}\br\nu,n}(b_n)+F_{\nu\br\mu^{\bp (s+1)},n}(b_n)-\{(1+s_n)^{-1}[ F_{\nu\br\mu^{\bp (s+1)},n}(b_n)]+(1+s_n)^{-1}s_n[F_{\mu^{\bp (s+1)}\bp\nu,n}(b_n)]\}\\
=&F_{\mu^{\bp (s+1)}\br\nu,n}(b_n)+(1+s_n)^{-1}s_n[ F_{\nu\br\mu^{\bp (s+1)},n}(b_n)]-(1+s_n)^{-1}s_n[F_{\mu^{\bp (s+1)}\bp\nu,n}(b_n)]\\
=&F_{\mu^{\bp (s+1)}\br\nu,n}(b_n)+(1+s_n)^{-1}s_n[ F_{\nu\br\mu^{\bp (s+1)},n}(b_n)-F_{\mu^{\bp (s+1)}\bp\nu,n}(b_n)]\\
=&F_{\mu^{\bp (s+1)}\br\nu,n}(b_n)+(1+s_n)^{-1}s_n[ -F_{\mu^{\bp (s+1)}\br\nu,n}(b_n)+b_n]\\
=&(1+s_n)^{-1}[F_{\mu^{\bp (s+1)}\br\nu,n}(b_n)]+(1-(1+s_n)^{-1})b_n.
\end{align*}

By Proposition \ref{reciprocal Cauchy transform of Boolean operator power convolution},  we have 
$$F_{\mu\br(\mu^{\bp s}\bp\nu),n}(b_n)=F_{(\mu^{\bp (s+1)}\br\nu)^{\uplus (1+s)^{-1}},n}(b_n).$$

Therefore, 
  $$\mu\br(\mu^{\bp s}\bp\nu)=(\mu^{\bp (s+1)}\br\nu)^{\uplus (1+s)^{-1}}.$$

By Proposition \ref{R-transform and subordination}, we have 
$$(\mu^{\bp (s+1)}\br\nu)^{\uplus (1+s)^{-1}}=((\mu\br\nu)^{\bp (s+1)})^{\uplus (1+s)^{-1}}=\B_s(\mu\br\nu).$$
The proof is done.
\end{proof}

\begin{corollary}\normalfont
Let $\nu\in\cbd$,  $s\in\cp$  and $\gamma_s$ be the semicircular of variance $s$. Then
 $$ \Phi(\gamma_s\bp \nu)=\B_s(\Phi(\nu)).$$
\end{corollary}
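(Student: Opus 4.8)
The plan is to prove the identity by showing that its two sides have the same matricial reciprocal Cauchy transform and then invoking the injectivity of $F_{(\cdot)}$ on $\cbd$ established in Section 2.3. Throughout I would fix $n\in\N$ and $b_n\in\mathbb{H}_n^+$, and aim to write both $F_{\Phi(\gamma_s\bp\nu),n}(b_n)$ and $F_{\B_s(\Phi(\nu)),n}(b_n)$ in the common form $b_n-G_{\nu,n}(\omega_n)$, where $\omega_n$ is the unique solution of a single subordination-type fixed point equation. Since $\Phi(\nu)\in\cbd$, the relevant equation will be Equation (\ref{operator power subordination}) attached to the distribution $\Phi(\nu)$ with parameter $\alpha=1+s$, whose unique solvability is guaranteed by Theorem 8.4 of \cite{ABFN}.

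For the left-hand side I would start from the defining formula $F_{\Phi(\gamma_s\bp\nu),n}(b_n)=b_n-G_{\gamma_s\bp\nu,n}(b_n)$. The key computation is to identify the semicircular subordination function: combining Proposition \ref{reciprocal cauchy of free-Boolean} and Proposition \ref{reciprocal cauchy of Free-Monotone} with the fact that the matricial Voiculescu transform of $\gamma_s$ is the linear map $\phi_{\gamma_s,n}(b_n)=s_n(b_n^{-1})$, I expect to obtain
$$F_{\gamma_s\br\nu,n}(b_n)=b_n-s_n\big(G_{\gamma_s\bp\nu,n}(b_n)\big).$$
Writing $\omega_n=F_{\gamma_s\br\nu,n}(b_n)$ and using Proposition \ref{reciprocal cauchy of Free-Monotone} in the form $F_{\gamma_s\bp\nu,n}(b_n)=F_{\nu,n}(\omega_n)$ gives $G_{\gamma_s\bp\nu,n}(b_n)=G_{\nu,n}(\omega_n)$, so that $\omega_n$ solves $\omega_n=b_n-s_n(G_{\nu,n}(\omega_n))$ and $F_{\Phi(\gamma_s\bp\nu),n}(b_n)=b_n-G_{\nu,n}(\omega_n)$.

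For the right-hand side I would unwind $\B_s(\Phi(\nu))=\big(\Phi(\nu)^{\bp(1+s)}\big)^{\uplus(1+s)^{-1}}$. Applying Theorem \ref{reciprocal Cauchy transform of Boolean operator power convolution} for the Boolean power and then the free-power formula derived in the proof of Proposition \ref{exchange free and boolean} (itself resting on Proposition \ref{reciprocal Cauchy transform of  s-free convolution powers} and Equation (\ref{operator power subordination 2})), and using that $s$ and $1+s$ commute as completely positive maps, I expect the $b_n$-terms to collapse and leave
$$F_{\B_s(\Phi(\nu)),n}(b_n)=b_n-G_{\nu,n}(\tilde\omega_n),$$
where $\tilde\omega_n=\omega_{1+s,\Phi(\nu),n}(b_n)$. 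Because $h_{\Phi(\nu),n}=F_{\Phi(\nu),n}-\mathrm{id}=-G_{\nu,n}$, Equation (\ref{operator power subordination}) for $\Phi(\nu)$ reads exactly $\tilde\omega_n=b_n-s_n(G_{\nu,n}(\tilde\omega_n))$.

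Finally, $\omega_n$ and $\tilde\omega_n$ satisfy the same fixed point equation, so by the uniqueness in Theorem 8.4 of \cite{ABFN} they coincide; hence the two reciprocal Cauchy transforms are equal and the identity follows. I expect the main obstacle to be the second step of the left-hand computation, namely recognizing that the semicircular subordination function $F_{\gamma_s\br\nu,n}$ is precisely the free-convolution-power subordination function attached to $\Phi(\nu)$, i.e. matching the two a priori different fixed point equations. I note that the tempting shortcut of setting $\mu=\gamma$ in Proposition \ref{operator free envolution} and applying Proposition \ref{Phi suboridination semicirlucar} only yields $\B_s(\Phi(\gamma\bp\nu))=\Phi(\gamma_s\bp(\gamma\bp\nu))$, i.e. the identity restricted to the range of $\gamma\bp(\cdot)$, which is why the analytic route above is needed to cover all $\nu\in\cbd$.
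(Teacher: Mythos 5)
Your argument is correct, but be aware that the ``tempting shortcut'' you dismiss in your closing sentence is, essentially verbatim, the paper's entire proof: it cites Proposition \ref{Phi suboridination semicirlucar} and Proposition \ref{operator free envolution} ``by letting $\mu=\gamma$'', which yields $\B_s(\Phi(\gamma\bp\nu))=\B_s(\gamma\br\nu)=\gamma\br(\gamma_s\bp\nu)=\Phi(\gamma_s\bp(\gamma\bp\nu))$ and hence the stated identity only for distributions lying in the range of $\gamma\bp(\cdot)$. Your diagnosis of that limitation is accurate, and your direct analytic route repairs it: both sides are exhibited as $b_n-G_{\nu,n}(\omega_n)$, where $\omega_n$ is the unique solution of $\omega_n=b_n-s_n\bigl(G_{\nu,n}(\omega_n)\bigr)$ --- once by identifying the semicircular subordination function $F_{\gamma_s\br\nu,n}(b_n)=b_n-s_n\bigl(G_{\gamma_s\bp\nu,n}(b_n)\bigr)$ from Propositions \ref{reciprocal cauchy of Free-Monotone} and \ref{reciprocal cauchy of free-Boolean} together with $\phi_{\gamma_s,n}(b_n)=s_n(b_n^{-1})$, and once by reading Equation (\ref{operator power subordination}) for $\Phi(\nu)$ with $h_{\Phi(\nu),n}=-G_{\nu,n}$ --- after which the uniqueness in Theorem 8.4 of \cite{ABFN} identifies the two subordination functions. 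I checked both computations and they close. One small adjustment: on the right-hand side, avoid routing through the explicit free-power formula in the proof of Proposition \ref{exchange free and boolean}, since that formula inverts $p_n-1_n=s_n$ and the corollary does not assume $s$ invertible; use instead the subordination relation $F_{\Phi(\nu)^{\bp(1+s)},n}=F_{\Phi(\nu),n}\circ\omega_{1+s,\Phi(\nu),n}$ together with Theorem \ref{reciprocal Cauchy transform of Boolean operator power convolution}, and the $b_n$-terms cancel exactly as you predict. In short, yours is a genuinely different and more complete argument: the paper's proof buys brevity at the cost of establishing the identity only for $\nu\in\gamma\bp\cbd$, while your fixed-point comparison covers all $\nu\in\cbd$.
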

\begin{proof}
The statement follows Proposition \ref{Phi suboridination semicirlucar} and Proposition \ref{operator free envolution} by letting $\mu=\gamma$.
\end{proof}

\bibliographystyle{plain}

\bibliography{references}

\noindent Department of Mathematics\\
Indiana University	\\
Bloomington, IN 47401, USA\\
E-MAIL: liuweih@indiana.edu \\

\end{document}